\documentclass{amsart}

\usepackage{amsmath,amssymb,amsfonts,amsthm}
\usepackage{enumerate}
\usepackage{mathtools}
\usepackage{stmaryrd}
\usepackage{centernot}
\usepackage{float}
\usepackage{tabularx}
\usepackage{tikz}
\usetikzlibrary{arrows.meta,matrix}
\usepackage{tikz-cd}
\usepackage[mathscr]{euscript}
\usepackage{microtype}
\usepackage{xcolor}
\definecolor{mygray}{gray}{0.85}
\usepackage[linecolor=black,backgroundcolor=mygray,colorinlistoftodos,prependcaption,textsize=small]{todonotes}
\usepackage[colorlinks,citecolor=blue,urlcolor=blue,linkcolor=blue]{hyperref}

\usepackage[backgroundcolor=mygray,colorinlistoftodos,prependcaption,textsize=small]{todonotes}
\usepackage{xcolor}

\renewcommand{\leq}{\leqslant}
\renewcommand{\geq}{\geqslant}

\newcommand{\mrm}[1]{\mathrm{#1}}

\makeatletter
\def\subsection{\@startsection{subsection}{3}%
 \z@{.5\linespacing\@plus.7\linespacing}{.3\linespacing}%
 {\bfseries\centering}}
\makeatother

\makeatletter
\def\subsubsection{\@startsection{subsubsection}{3}%
 \z@{.5\linespacing\@plus.7\linespacing}{.3\linespacing}%
 {\centering}}
\makeatother


\makeatletter
\def\myfnt{\ifx\protect\@typeset@protect\expandafter\footnote\else\expandafter\@gobble\fi}
\makeatother

\newcommand{\pureindep}[1][]{%
 \mathrel{
  \mathop{
   \vcenter{
    \hbox{\oalign{\noalign{\kern-.3ex}\hfil$\vert$\hfil\cr
       \noalign{\kern-.7ex}
       $\smile$\cr\noalign{\kern-.3ex}}}
   }
  }\displaylimits_{#1}
 }
}

\newcommand{\indep}[2]{%
 \mathrel{
  \mathop{
   \vcenter{
    \hbox{%
\oalign{
\noalign{\kern-.3ex}\hfil$\vert$\hfil\cr
       \noalign{\kern-.7ex}
       $\smile$\cr\noalign{\kern-.3ex}
}
}
   }
}^{\!\!\!\!\!#2}_{\!\!\hspace{-0.1em}#1}
 }
}

\newcommand{\displayindep}[2]{%
 \mathrel{
  \mathop{
   \vcenter{
    \hbox{%
\oalign{
\noalign{\kern-.3ex}\hfil$\vert$\hfil\cr
       \noalign{\kern-.7ex}
       $\smile$\cr\noalign{\kern-.3ex}
}
}
   }
}^{\!\!\hspace{-0.1em}#2}_{\!\!\hspace{-0.1em}#1}
 }
}

\newcommand{\displayfindep}[2]{%
 \mathrel{
  \mathop{
   \vcenter{
    \hbox{%
\oalign{
\noalign{\kern-.3ex}\hfil$\vert$\hfil\cr
       \noalign{\kern-.7ex}
       $\smile$\cr\noalign{\kern-.3ex}
}
}
   }
}^{\!\hspace{-0.14em}#2}_{\!\!\hspace{-0.05em}#1}
 }
}

\renewcommand{\prec}{\preccurlyeq}

\newcommand{\gS}{\mathcal{S}}

\newtheoremstyle{plainupright}
 {3pt}  
 {3pt}  
 {\normalfont}  
 {}   
 {\bfseries}   
 {.}   
 {.5em} 
 {}   

\theoremstyle{plainupright}
\newtheorem{theorem}{Theorem}[section]

\newtheorem{corollary}[theorem]{Corollary}

\newtheorem{lemma}[theorem]{Lemma}
\newtheorem{proposition}[theorem]{Proposition}
\newtheorem{example}[theorem]{Example}

\newtheorem{question}[theorem]{Question}

\theoremstyle{definition}

\newtheorem{fact}[theorem]{Fact}
\newtheorem{definition}[theorem]{Definition}
\newtheorem{remark}[theorem]{Remark}

\newcounter{claimcounter}
\numberwithin{claimcounter}{theorem}

\newcommand{\pleq}{\leq_{\mrm{pp}}}
\newcommand{\pleqq}{\leq^{\mrm{weak}}_{\mrm{pp, \infty}}}
\newcommand{\oleq}{\leq_{\oplus}}

\newcommand{\rmod}{R\text{-}\mathrm{Mod}}
\newcommand{\zmod}{\mathbb{Z}\text{-}\mathrm{Mod}}

\newcommand{\gtp}{\mathbf{gtp}}

\begin{document}
\begin{abstract}
Let $R$ be a ring. We organize the abstract elementary classes whose underlying class is the class of all $R$-modules and whose strong submodel relation lies between the submodule and direct summand relations into a lattice $\mathscr{L}_{R}$, ordered by reverse inclusion. We establish the basic lattice-theoretic properties of $\mathscr{L}_{R}$ and investigate its two natural sublattices, below and above purity. Below purity, we isolate relations defined by first-order $\mrm{pp}$-formulas for which amalgamation, tameness, and stability hold. Above purity, we introduce relations defined by infinitary $\mrm{pp}$-formulas and prove a broad stability result. Specializing to abelian groups, we show that the lattice $\mathscr{L}_{\mathbb{Z}}$ has the following properties: it  has a strong submodel relation that is not positive syntactic, it contains an uncountable antichain and a strictly increasing proper-class-sized chain, and it has a broad region above purity where amalgamation fails.
\end{abstract}

\title[The lattice of AECs of modules]{The lattice of abstract elementary classes of modules}

\thanks{Research of Marcos Mazari-Armida was partially supported by NSF grant DMS-2348881 and Simons Foundation grant MPS-TSM-00007597. Research of Gianluca Paolini was supported by project PRIN 2022 ``Models, Sets and Classifications'', prot. 2022TECZJA, and by INdAM Project 2024 (Consolidator grant) ``Groups, Crystals and Classifications''.}

\keywords{Abstract elementary
classes; Modules; Stability; Amalgamation.}
\subjclass[2020]{Primary: 03C48, 03C60; Secondary: 03C45, 16D90, 13L05, 	20K27.} 

\author{Tapani Hyttinen}
\address{Department of Mathematics and Statistics, University of Helsinki,
P.O. Box 68 (Pietari Kalmin katu 5), FI-00014 University of Helsinki, Finland.}
\email{tapani.hyttinen@helsinki.fi}

\author{Marcos Mazari-Armida}
\address{Department of Mathematics,
	Baylor University,
	Sid Richardson Building,
	1410 S.~4th Street,
	Waco, TX 76706, USA}
\email{marcos\_mazari@baylor.edu}
\urladdr{https://sites.baylor.edu/marcos\_mazari/}

\author{Gianluca Paolini}
\address{Department of Mathematics ``Giuseppe Peano'', University of Turin,
Palazzo Campana, Via Carlo Alberto 10, 10123 Torino, Italy.}
\email{gianluca.paolini@unito.it}
\urladdr{https://sites.google.com/view/gianlucapaolini/}
\raggedbottom
\maketitle

\setcounter{tocdepth}{2}
\tableofcontents
\newpage
\flushbottom


\section{Introduction}

Abstract elementary classes (AECs for short) constitute the prominent model-theoretic framework used to study non-elementary classes of structures. These were introduced in the seventies by Shelah  \cite{sh88}. For many years the focus was on developing the abstract theory  \cite{baldwinbook09, bova-tame, shelahaecbook}, but recently there has been significant activity on finding interactions and applications of AECs to module theory \cite{bet, bon25,  maztor, mj, 1121, satr}. This paper introduces a new kind of interaction between AECs and module theory which we expect  will be very fruitful.

An abstract elementary class is a pair $\mathcal{K} = (\mathbf{K}, \prec)$, where $\mathbf{K}$ is a class of structures and $\prec$ is a partial order on $\mathbf{K}$, subject to a few natural axioms (see Definition \ref{def_AEC}). Two of the key axioms are that $\mathcal{K}$ is closed under directed colimits and that every element of $\mathbf{K}$ can be decomposed into \emph{small} elements of $\mathbf{K}$. In this paper, $\mathbf{K}$ will always be a class of modules over a fixed ring $R$; in most cases it will in fact be the class of \emph{all} $R$-modules.

Most of the interactions of AECs and module theory have taken place by fixing a class of modules and studying the AEC one obtains by letting the  partial order on the class be {\em the pure submodule relation}\footnote{$A$ is a pure submodule of $B$ if every finite system of $R$-linear equations with the right-hand side in $A$ that is solvable in $B$ is already solvable in $A$.}. The choice of purity as the \emph{de facto} partial order to study classes of modules is motivated both by the algebraic importance of purity \cite{fuchs, GT} and the first-order model-theoretic result of $\mrm{pp}$-quantifier elimination \cite{prest_book1}. Nevertheless there is no deep reason why one should restrict to purity and recently interesting results for other partial orders have been obtained \cite{cox, mj, mt1, 1121}.

In this paper, we fix the class of all modules and instead of studying the AEC one obtains by fixing the partial order to be purity  (as it was done in \cite{maz1}) or another partial order, we study all the possible partial orders that give an AEC on the class of all  modules.

This new approach has two key benefits. The first is that it allows us to obtain results for a family of AECs instead of a single AEC. This lets us recover many results from the literature (see Remarks \ref{gen-sta-1} and \ref{gen-sta-2}). The second, which we find to be the most significant, is that it allows us to uncover a new mathematical object $\mathscr{L}_{R}$, which is the main object of study of this paper. $\mathscr{L}_{R}$ is the lattice of  binary relations $\preccurlyeq$ on the class of $R$-modules such that the class of $R$-modules equipped with $\preccurlyeq$ is an abstract elementary class and $\preccurlyeq$ lies between the submodule and direct summand relations, ordered by reverse inclusion (see Definition \ref{main_def} and Corollary \ref{lattice}). Since the underlying class  is fixed and only the strong submodel relation varies, $\mathscr{L}_{R}$ measures how different algebraically natural notions of submodule change the model-theoretic behavior of the class of modules.

 The results on the lattice are of two sorts. The first consists of isolating properties that allow us to show that a broad region of the lattice has or fails to have important model-theoretic properties. The second consists of proving lattice-theoretic results. These results depend heavily on the structure of the ring $R$; for instance, the lattice is trivial if $R$ is semisimple. Thus, in this paper we focus mostly on the case when $R=\mathbb{Z}$.

We divide the study of the lattice into  two natural sublattices: the sublattice below purity $\mathscr{L}^1_{R}$ and the sublattice above purity $\mathscr{L}^2_{R}$. The techniques we employ to study $\mathscr{L}^1_{R}$ mimic those used for purity, while the techniques we employ for $\mathscr{L}^2_{R}$ are new. Regarding  $\mathscr{L}^2_{R}$, it is worth noticing that even the existence of elements in $\mathscr{L}^2_{R}$ strictly extending purity was not known before this work. We construct a plethora of elements in $\mathscr{L}^2_{R}$ by using \emph{infinitary $\mrm{pp}$-formulas} (see Definition \ref{deflambdaformulas}).

The relations above purity, in $\mathscr{L}^2_{R}$, measure the extent to which solvability of infinite systems of linear equations in the ambient module is reflected in the submodule. The relation $\pleqq$ requires the two modules to agree on every weak infinitary $\mrm{pp}$-formula; equivalently, whenever a possibly infinite family of first-order $\mrm{pp}$-formulas over a finite tuple from the smaller module is realized by a finite tuple in the larger module, it is already realized in the smaller one. The relations $\leq^{(\kappa,\alpha)}_{\mrm{pp}}$ and $\leq^{(\kappa,\infty)}_{\mrm{pp}}$ refine the same idea by controlling the size and the syntactic complexity of the admissible infinitary conjunctions (see Definition \ref{or_infty}). This connects with the study of $\mu$-$\mrm{AEC}$s of modules initiated in \cite{carnevale}. The relations of $\mu$-purity considered there, however, fail to give rise to $\mrm{AEC}$s, so the relations $\pleqq$, $\leq^{(\kappa,\alpha)}_{\mrm{pp}}$ and $\leq^{(\kappa,\infty)}_{\mrm{pp}}$ introduced here take on particular interest as they are genuinely new.
 
The main model-theoretic properties studied in this paper are stability and the amalgamation property. \emph{Stability} is one of the key dividing lines in model theory (see Definition \ref{def_stability}); it was introduced for elementary classes in \cite{sh1} and for AECs in \cite{sh394}. We show that stability is pervasive in both $\mathscr{L}^1_{R}$ and $\mathscr{L}^2_{R}$. In passing, we recall that it was an open problem whether there exists an \emph{unstable} AEC of modules with respect to purity; this was answered in the positive in \cite{1121} (see also \cite{mazaripaolini}).
 
 \begin{theorem}\
 \begin{enumerate}[(1)]
 \item (Lemma \ref{stable-(I)}) Assume that $\preccurlyeq \in  \mathscr{L}^1_{R}$ is pleasant (see Definition \ref{def_pleasant}). If $\lambda^{\operatorname{card}(R) + \aleph_0}= \lambda$, then $(\rmod, \preccurlyeq)$ is $\lambda$-stable.
 \item (Theorem \ref{sta-nice}) Assume that $\preccurlyeq \in \mathscr{L}^2_{R}$ is E-nice  (see Definition \ref{pre_def_nice}). Then $(\rmod, \preccurlyeq)$ is stable.
 \end{enumerate}
 \end{theorem}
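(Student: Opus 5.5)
This theorem packages two results proved later in the paper, so the plan is to prove each part separately, in both cases by bounding Galois types. Here is the common setup. In $(\rmod,\preccurlyeq)$ the class of models is all $R$-modules, and $\preccurlyeq$ lies between the submodule and direct summand relations. So to show $\lambda$-stability it suffices to show that for every $M$ of size $\lambda$ the set $\gtp(a/M;N)$, with $M\preccurlyeq N$, has size at most $\lambda$. Throughout I would first check, or take from the relevant definition, that $(\rmod,\preccurlyeq)$ has amalgamation, so that Galois types are given by the transitive equivalence.

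For (1), the hypothesis that $\preccurlyeq\in\mathscr{L}^1_R$ is pleasant (Definition \ref{def_pleasant}) should provide three things: amalgamation, and a description of $\preccurlyeq$ by a family $\Phi$ of first-order pp-formulas, and the fact that Galois types are determined by a syntactic type. Concretely, $\gtp(a/M;N)=\gtp(b/M;N')$ should hold whenever $a,b$ satisfy the same $\Phi$-formulas (or pp-formulas) with parameters in $M$. I would prove this by mimicking the purity argument. Given equal pp-types, I would build an amalgam by taking a pushout $N\oplus_M N'$ and quotienting by $a-b$. I would then verify that the maps into it are $\preccurlyeq$-embeddings, using the pp-description that pleasantness provides. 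Counting then finishes the proof. A pp-type over $M$ is a set of pp-formulas with parameters from $M$, and there are $\operatorname{card}(R)+\aleph_0$ formulas. Since pp-types over $M$ correspond to pp-definable subgroup data, each type is determined by its restriction to countable or small parameter sets. The crude bound is therefore $\lambda^{\operatorname{card}(R)+\aleph_0}$ types, which equals $\lambda$ by hypothesis. The main obstacle in (1) is the amalgamation step for the quotient: it must land in $\preccurlyeq$ and not merely in the submodule relation. That is exactly where pleasantness must be used.

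For (2), $\preccurlyeq\in\mathscr{L}^2_R$ sits above purity, so types can be richer. E-niceness (Definition \ref{pre_def_nice}) presumably says that $\preccurlyeq$ is determined by (weak) infinitary pp-formulas, and that it is preserved under embeddings into pure-injective or "E-algebraically compact" hulls. The plan is to reduce Galois types to types inside a fixed large pure-injective module. I would embed $N$ into its pure-injective hull $\mathrm{PE}(N)$ as a $\preccurlyeq$-substructure. Note that a direct summand is always $\preccurlyeq$-strong, and E-niceness should give the analogous statement for the hull. Then $\gtp(a/M;N)$ is determined by the isomorphism type over $M$ of the hull of $M\cup\{a\}$ inside $\mathrm{PE}(N)$, and this is governed by the pp-type of $a$ over $M$. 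The count is then the standard Ziegler-type bound: the number of pp-types over $M$ is at most $|M|^{\operatorname{card}(R)+\aleph_0}$. Hence stability holds in every $\lambda=\lambda^{\operatorname{card}(R)+\aleph_0}$, so the class is stable. The hard part is showing that two elements with the same first-order pp-type over $M$ have the same Galois type, even though $\preccurlyeq$ involves infinitary formulas. I would try to show that inside pure-injective (or suitably saturated) modules the relevant infinitary pp-types are determined by the first-order ones, using the hull construction and the E-nice hypothesis to realize the partial isomorphism as a $\preccurlyeq$-embedding.
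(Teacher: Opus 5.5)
Your part (1) follows the paper's route: pushout amalgamation (Lemma~\ref{AP-(I)}), the type characterization obtained by quotienting $N\oplus_M N'$ by the cyclic submodule generated by $[(a,-b)]$ (Theorem~\ref{char-gtps}), and then a count of pp-types. The count is not justified as written. Since $\preccurlyeq$ lies below purity, $M$ need not be pure in $N$. The subgroups $\varphi(0,N)\cap M^n$, of which the parameter sets are cosets, then depend on $N$. For example, take $\preccurlyeq\,=\,\leq$ over $\mathbb Z$ and $M$ free of rank $\lambda$: the groups $2N\cap M$ alone already give $2^\lambda$ distinct pp-types of $0$ over $M$. The paper avoids this by realizing all types in one $B$ via amalgamation and counting pp-types over a pure $\widehat A\pleq B$ of size $\lambda$ containing $A$ (Proposition~\ref{small-pp-types}). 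Alternatively, count only $\operatorname{pp}_\Sigma$-types; their relevant subgroups are determined by $M$ thanks to clause~(2) of pleasantness.

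Part (2) has a genuine gap, because it rests on a guessed meaning of E-nice. Definition~\ref{pre_def_nice} is purely syntactic: $\preccurlyeq\,=\,\leq_\Sigma$ with every formula of $\Sigma$ of the form $\exists\bar v\bigwedge_{i<\lambda}\delta_i(\bar v,\bar x)=0$. It says nothing about hulls, and every step of your plan fails for $\pleqq$ over $\mathbb Z$, which is E-nice by Lemma~\ref{first_nice_lemma}.
\begin{enumerate}[(a)]
\item $(\zmod,\pleqq)$ fails amalgamation (Theorem~\ref{no_AP_th}), so your common setup is unavailable.
\item $\mathbb Z\pleq\mathrm{PE}(\mathbb Z)$ but $\mathbb Z\not\pleqq\mathrm{PE}(\mathbb Z)$ (remark after Definition~\ref{def_weak_pp}), so a module need not be $\preccurlyeq$-strong in its pure-injective hull.
\item Most fundamentally, Galois types for $\leq_\Sigma$ preserve the $\Sigma$-formulas with parameters from the base, so first-order pp-types cannot determine them. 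Indeed, $1\in\mathbb Z$ and $1\in\mathrm{PE}(\mathbb Z)$ have the same first-order pp-type, yet the weak infinitary pp-formula $\exists y\bigwedge_{\varphi\in p}\varphi(y,x)$ from that remark separates them. Hence their $\pleqq$-types over $0$ differ.
\end{enumerate}
So no Ziegler-type count can work, and your spectrum $\theta=\theta^{\operatorname{card}(R)+\aleph_0}$ is unsupported. The paper gets $\theta=\theta^{2^\lambda}$, where $\lambda$ is the width of the formulas in $\Sigma$.

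The missing idea is a local character argument that bypasses pp-types entirely (Claim~1 of Theorem~\ref{sta-nice}): for $A\preccurlyeq B$ and $b\in B$ there is $C\preccurlyeq B$ with $b\in C$, $|C|\le 2^\lambda$ and $\langle AC\rangle_B\preccurlyeq B$.
\begin{enumerate}[(1)]
\item If not, build a continuous chain $(C_i)_{i<(2^\lambda)^+}$ such that some $\varphi_i^*(\bar a_i,\bar c_i)$ fails in $\langle AC_i\rangle_B$ but holds in $\langle AC_{i+1}\rangle_B$.
\item Fodor's lemma and $|\Sigma|\le 2^\lambda$ fix $\varphi$ and $\bar c$ on a large set.
\item For $i<j$ in that set, the subgroup property and Proposition~\ref{easy-nice} move $\varphi^*(\bar a_j-\bar a_i,\bar 0)$ from $B$ down to $A$. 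This yields $\langle AC_{j}\rangle_B\models\varphi^*(\bar a_j,\bar c)$, contradicting the choice at stage $j$.
\end{enumerate}
Next, $\langle AC\rangle_B\cong A\oplus_{A\cap C}C$, so the type of $b$ over $A$ is determined by the isomorphism type of $(C,b)$ over $A\cap C$. This uses the chain $(b,A,B)\mathrel{E^{\mathrm{at}}_{\mathcal K}}(b,A,\langle AC\rangle_B)\mathrel{E^{\mathrm{at}}_{\mathcal K}}(b',A,\langle AC'\rangle_{B'})\mathrel{E^{\mathrm{at}}_{\mathcal K}}(b',A,B')$, which needs no amalgamation. Counting these data gives $\theta$-stability whenever $\theta^{2^\lambda}=\theta$.
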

 
 The proofs of the above two results are very different. The first result is obtained by first syntactically characterizing (Galois or orbital) types (see Theorem \ref{char-gtps}) and then counting the number of types using similar tools to the classical tools from first-order model theory. The second result uses in a key way the pushout construction and a local character-like result (see Claim 1 of Theorem \ref{sta-nice}).

\emph{Amalgamation}  is a classical assumption on AECs (see Definition \ref{amal}). This is the case because amalgamation is seen as a weak consequence of the compactness theorem, and because it is expected to follow from categoricity in a tail of cardinals \cite[Conjecture 2.3]{grossberg2002}. We show that amalgamation holds often in $\mathscr{L}^1_{R}$, but often fails in $\mathscr{L}^2_{R}$ when $R = \mathbb{Z}$.

 \begin{theorem}\label{AP_t}\
 \begin{enumerate}[(1)]
 \item (Lemma \ref{AP-(I)})
 Assume $\preccurlyeq \in  \mathscr{L}^1_{R}$ is pleasant. Then $(\rmod,\preccurlyeq)$ has the amalgamation property.
 \item (Theorem \ref{no_AP_th}) Assume $\preccurlyeq\in\mathscr{L}^2_{\mathbb Z}$ is positive syntactic (see Definition \ref{def_syntactic}) and lies above $\pleqq$ (see Definition \ref{def_weak_pp}) in $\mathscr{L}_{\mathbb Z}$. Then $(\zmod,\preccurlyeq)$ does not have the amalgamation property.
 \end{enumerate}
 \end{theorem}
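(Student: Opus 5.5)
Theorem \ref{AP_t} bundles two results proved later in the paper, so the plan splits into two parts.

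\emph{Part (1): amalgamation for pleasant relations in $\mathscr{L}^1_{R}$.} Let $M_0 \preccurlyeq M_1$ and $M_0 \preccurlyeq M_2$. I would take the algebraic pushout $N = (M_1 \oplus M_2)/\{(m,-m) : m \in M_0\}$ together with the canonical maps $f_i : M_i \to N$. Pleasantness (Definition \ref{def_pleasant}) is presumably designed so that $\preccurlyeq$ is witnessed by a family of first-order $\mrm{pp}$-formulas $\Phi$, in the sense that $A \preccurlyeq B$ iff $\phi(A) = \phi(B) \cap A$ for all $\phi \in \Phi$. Under that reading, the key step is the standard purity computation.
\begin{itemize}
\item Suppose $\bar a \in M_1$ and $f_1(\bar a)$ satisfies $\phi$ in $N$.
\item Unravel the witness in $N$ into witnesses in $M_1$ and $M_2$ that differ by an element of $M_0$.
\item Use that $M_0 \preccurlyeq M_2$ to pull the $M_2$-part back into $M_0$, and conclude $\phi(\bar a)$ in $M_1$.
\end{itemize}
This gives both that $f_i$ is injective and that $f_i(M_i) \preccurlyeq N$. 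The obstacle is to check that pleasantness supplies exactly the closure of $\Phi$ that this pushout argument needs, namely closure under the relevant conjunctions and projections. I would first verify the argument for purity and then isolate which closure properties of $\Phi$ it used.

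\emph{Part (2): failure of amalgamation for $\preccurlyeq \in \mathscr{L}^2_{\mathbb Z}$ positive syntactic above $\pleqq$.} The plan is to find $M_0 \pleqq M_1$ and $M_0 \pleqq M_2$ that cannot be amalgamated even for the weaker relation $\pleqq$. Since $\preccurlyeq$ is positive syntactic and contains $\pleqq$, I would arrange the configuration so that the witnesses are also $\preccurlyeq$-extensions, presumably via an absoluteness or "agreement on all positive infinitary formulas" lemma for suitable direct-sum-like embeddings. Failure of amalgamation for $\pleqq$ then transfers to failure for $\preccurlyeq$, since any $\preccurlyeq$-amalgam is in particular a $\pleqq$-amalgam.

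The candidate configuration is the following.
\begin{itemize}
\item Take $M_0$ containing an element $a$ together with an infinite family of $\mathrm{pp}$-conditions over $a$ that is not realized by a single tuple in $M_0$. For instance, the conditions could be $p^n \mid a$ for all $n$ while $a$ is not divisible by all $p^n$ via a single element, or a mixed system in some $p$-adic-flavoured group.
\item Let $M_1$ realize one infinite type over $a$, e.g.\ make $a$ infinitely $p$-divisible inside a divisible-hull-like extension.
\item Let $M_2$ realize a conflicting type, e.g.\ a torsion condition contradicting the first.
\end{itemize}
Each extension must still satisfy $\pleqq$: no new finite-tuple realization of an infinite $\mathrm{pp}$-family over $M_0$ may appear in it. This needs a careful choice, and I expect it to be the main obstacle, because the infinitary $\mathrm{pp}$-types realized in each extension must be controlled by hand.

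Given such a configuration, an amalgam $N$ with $f_1(M_1) \pleqq N$ would realize both families over the image of $a$. That realization should produce a finite tuple in $N$ satisfying a weak infinitary $\mathrm{pp}$-formula over $M_1$ which is not satisfied in $M_1$, a contradiction.
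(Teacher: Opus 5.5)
\textbf{Part (1).} This is the paper's pushout argument. One correction: the closure property it needs is condition (2) of Definition~\ref{def_pleasant}, not closure under conjunctions or projections. A witness in the pushout splits as $H\bar b-K\bar\ell=\bar a$ in $M_1$ and $H\bar c=-\bar a$ in $M_2$, with $\bar a\in M_0^m$. Then $\exists\bar y\,(H\bar y=\bar z)\in\Sigma$ together with $M_0\preccurlyeq M_2$ gives $\bar e\in M_0^n$ with $H\bar e=-\bar a$, so $\bar b+\bar e$ witnesses the formula in $M_1$.

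\textbf{Part (2): making the span a $\preccurlyeq$-span.} Here there is a genuine gap. Positivity plus $\preccurlyeq\subseteq\pleqq$ cannot supply this step. The relation $\leq_\oplus$ is contained in $\pleqq$ and equals $\leq_{\Lambda^{\infty,\infty}_\infty}$. One inclusion is Corollary~\ref{ref_sums}. For the other, take free variables enumerating $M_0$ and an existential block enumerating $M_1$; the conjunction of all linear equations true in $M_1$ holds in $M_0$ iff there is a retraction $M_1\to M_0$. Yet $(\zmod,\leq_\oplus)$ has amalgamation, since $M_0\oplus N_1$ and $M_0\oplus N_2$ amalgamate in $M_0\oplus N_1\oplus N_2$. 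So ``agreement on all positive infinitary formulas'' just means being a direct summand, and the proof must use that $\preccurlyeq\in\mathscr{L}_{\mathbb Z}$ is an AEC. The missing idea is smoothness. Take $M_0=\bigcup_{n<\omega}M_0(n)$ with $M_0(n)\leq_\oplus M_0(m)$ for $n\leq m$ and $M_0(n)\leq_\oplus M_1,M_2$. Since $\leq_\oplus\subseteq\preccurlyeq$, axiom (4.3) gives $M_0\preccurlyeq M_1,M_2$ for every $\preccurlyeq\in\mathscr{L}_{\mathbb Z}$ at once, even though $M_0$ itself need not be a summand.

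The rest of your reduction is sound. Any amalgam receives a homomorphism from $M_1\oplus_{M_0}M_2$, weak infinitary $\mrm{pp}$-formulas are preserved along it, and $\preccurlyeq$-embeddings are $\pleqq$-embeddings. So it suffices that $M_1\to M_1\oplus_{M_0}M_2$ is not a $\pleqq$-embedding. The paper packages this as Proposition~\ref{push-AP}; run directly, as you do, the argument does not even use that $\preccurlyeq$ is positive syntactic.

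\textbf{Part (2): the candidate configuration.} It also cannot work. Any infinite $\mrm{pp}$-type $p(\bar y,a)$ over $a\in M_0$ that $M_1$ realizes by a finite tuple is already realized in $M_0$. This is because $\exists\bar y\bigwedge p(\bar y,a)$ is a weak infinitary $\mrm{pp}$-formula with parameters in $M_0$, and $M_0\pleqq M_1$. So your second bullet either contradicts the first or adds nothing over $M_0$; the same holds for $M_2$ and for any amalgam. Nor can $\mrm{pp}$-types realized in $M_1$ and in $M_2$ ``conflict'', since the pushout realizes both. The obstruction has to be a failure of reflection at a parameter outside $M_0$.

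In the paper ($A,B,C$ for your $M_0,M_1,M_2$, and $f,g$ the pushout maps), $A$ is the finitely supported part of $\prod_i\mathbb Z/p_i\mathbb Z$, and $B$, $C$ are groups of eventually recursive sequences. The formula is $\varphi(x)=\exists z\bigwedge_n\exists y_0\cdots y_{2n-1}(\cdots)$. It says, via elements $y_j$ with $p_jy_j=0$ quantified anew in each conjunct, that on the first $2n$ coordinates $z$ agrees with $x$ at even places and vanishes at odd ones. In the pushout the $y_j$ can be taken to be images of elements of $A$, so $g(\hat c)$ witnesses $\varphi(f(\hat b))$. In $B$, the growth condition $p_{i+1}>p_i^2\prod_{j<i}p_j$ rules out any witness.
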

 
 The proofs of the above two results rely heavily on the pushout construction. The second result once again suggests  \cite{mj, sh820} that amalgamation is not as pervasive as commonly assumed. Furthermore, the second result shows that the model-theoretic behavior of the class of modules for a relation above purity is distinct from that for purity.

We obtain a  couple of basic lattice-theoretic properties for $\mathscr{L}_{R}$ for an arbitrary ring $R$ (see Section \ref{b-results}). As mentioned earlier, since  $\mathscr{L}_{R}$ depends so heavily on the ring-theoretic properties of $R$, in this paper we focus on the case when $R =\mathbb{Z}$. The results for $\mathscr{L}_{\mathbb{Z}}$ use classical tools from abelian group theory such as the notion of isotype subgroup.

\begin{theorem}\
\begin{enumerate}[(1)]
\item (Proposition \ref{basic_L1}) $|\mathscr{L}^1_{\mathbb Z}| \leq 2^{2^{\aleph_0}}$.
\item (Lemma \ref{antichain}) $\mathscr{L}^1_{\mathbb Z}$ has an uncountable antichain.
\item (Theorem \ref{class_size}) There is a strictly increasing proper-class-sized chain in $\mathscr{L}_{\mathbb Z}$. In particular, the lattice $\mathscr{L}_{\mathbb Z}$ is a proper class. 
\end{enumerate}
\end{theorem}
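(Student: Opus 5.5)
The statement has three parts, and I would treat them separately: (1) by a downward Löwenheim–Skolem reduction to countable pairs, (2) by a family of relations indexed by sets of primes, and (3) by a separating construction for the infinitary relations of Definition \ref{or_infty}. Throughout I use that $\mathscr{L}^1_{\mathbb Z}$ consists of the relations $\preccurlyeq$ with $\leq_{\mrm{pp}}\subseteq\,\preccurlyeq\,\subseteq\,\leq$ (submodule). This is the only inclusion consistent with the reverse-inclusion ordering and with $\mathscr{L}^1_{\mathbb Z}$ lying below purity.

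\textbf{Part (1).} I would show that every $\preccurlyeq\in\mathscr{L}^1_{\mathbb Z}$ is determined by the set of isomorphism types of countable pairs $(B_0,A_0)$ with $A_0\preccurlyeq B_0$. The claim is that $A\preccurlyeq B$ holds iff $A_0\preccurlyeq B_0$ for every countable $B_0\subseteq B$ such that $(B_0,A\cap B_0)$ is an elementary substructure of the two-sorted structure $(B,A)$. Such a $B_0$ is pure in $B$, and $A_0:=A\cap B_0$ is pure in $A$.
\begin{enumerate}[(i)]
\item For the forward direction, suppose $A\preccurlyeq B$. Since purity is contained in $\preccurlyeq$, we get $A_0\preccurlyeq A\preccurlyeq B$ and $B_0\preccurlyeq B$. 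Since $A_0\subseteq B_0$, coherence gives $A_0\preccurlyeq B_0$.
\item For the converse, the countable elementary substructures of $(B,A)$ form a directed system whose union is $(B,A)$. The union (Tarski–Vaught chain) axioms of AECs, applied along this system, then give $A\preccurlyeq B$.
\end{enumerate}
There are at most $2^{\aleph_0}$ isomorphism types of countable pairs. Hence there are at most $2^{2^{\aleph_0}}$ relations in $\mathscr{L}^1_{\mathbb Z}$.

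\textbf{Part (2).} For a set $S$ of primes, define $A\leq_S B$ iff $A\leq B$ and $A\cap pB=pA$ for all $p\in S$. This is an intersection of relations given by first-order $\mrm{pp}$-formulas. It lies between purity and submodule, and I would verify that it is in $\mathscr{L}^1_{\mathbb Z}$ using the earlier results on $\mrm{pp}$-defined relations; the axioms are routine because the defining condition is preserved under directed unions and under coherence.

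Consider the embedding $\mathbb Z\to\mathbb Z$ given by multiplication by $p$, with image $p\mathbb Z\leq\mathbb Z$. For $q\neq p$ we have $p\mathbb Z\cap q\mathbb Z=qp\mathbb Z$, so the condition holds at $q$. For $q=p$ it fails, since $p\mathbb Z\cap p\mathbb Z=p\mathbb Z\neq p^2\mathbb Z$. Therefore $p\mathbb Z\leq_{S}\mathbb Z$ iff $p\notin S$, and so $\leq_S\,\subseteq\,\leq_{S'}$ iff $S'\subseteq S$.

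An uncountable family of pairwise $\subseteq$-incomparable sets of primes then gives an uncountable antichain. For example, enumerate the primes as $p_0,p_1,\dots$ and take $S_f=\{p_{2n+f(n)}:n\in\omega\}$ for $f\in 2^\omega$.

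\textbf{Part (3).} I would use the relations $\leq^{(\kappa,\infty)}_{\mrm{pp}}$ for infinite cardinals $\kappa$. These increase with $\kappa$ and, by the earlier results, lie in $\mathscr{L}_{\mathbb Z}$. The remaining task, which I expect to be the main obstacle, is strictness: for each $\kappa$ I need groups $A\leq B$ with $A\leq^{(\kappa,\infty)}_{\mrm{pp}}B$ but not $A\leq^{(\kappa^+,\infty)}_{\mrm{pp}}B$. The idea is to realize in $B$ an infinitary $\mrm{pp}$-type over a single element of $A$ that consists of $\kappa^+$ conditions and is omitted in $A$, while every $\kappa$-sized subconjunction is realized in $A$.

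The first construction I would try is to take $B$ to be a suitable completion or direct product of $\kappa^+$ many copies of $\mathbb Z$ (or of $\mathbb Z_{(p)}$), and $A$ to be the subgroup of elements of support of size $\leq\kappa$, arranged so that $A$ is isotype in $B$. With this choice, $\kappa$-sized systems of equations solvable in $B$ should restrict to solvable systems inside $A$, while the full $\kappa^+$-sized system should not. Once strictness holds at every $\kappa$, running $\kappa$ over all cardinals gives the proper-class chain, and in particular shows that $\mathscr{L}_{\mathbb Z}$ is a proper class.
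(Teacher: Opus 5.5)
Parts (1) and (2) are correct. In (1) you argue differently from the paper. The paper combines $\mathrm{LS}(\zmod,\preccurlyeq)=\aleph_0$ below purity with the general bound $|\mathscr{L}_{R,\lambda}|\leq 2^{2^\lambda}$, which it obtains by coding relations via Shelah's Presentation Theorem. You instead reduce directly to countable elementary sub-pairs, using purity inside $\preccurlyeq$, coherence, and the directed-system form of the chain axioms. For that last step you should note that $A_0\pleq A_1$ and $B_0\pleq B_1$ along the system, so it is $\preccurlyeq$-directed. Your route is more elementary, but it only works below purity, whereas the paper's bound covers all of $\mathscr{L}_{R,\lambda}$. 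Part (2) is the paper's argument, with pairwise $\subseteq$-incomparable sets of primes instead of an almost disjoint family, and $p\mathbb Z\leq\mathbb Z$ instead of the isomorphic pair $\mathbb Z\leq\langle\frac1p\rangle$.

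The gap is in (3): you never prove strictness, and the construction you suggest cannot work. Take $B=\mathbb Z^{\kappa^+}$ (or $\mathbb Z_{(p)}^{\kappa^+}$) and let $A$ be the subgroup of elements whose support has size at most $\kappa$. For any finite $\bar a$ from $A$, the coordinate projection onto the union of the supports of $\bar a$ is a homomorphism $B\to A$ fixing $\bar a$. By Proposition~\ref{preserve}, every formula of $\Lambda^{\infty,\aleph_0}_\infty$ true of $\bar a$ in $B$ is then already true in $A$. Hence $A\leq^{(\mu,\infty)}_{\mathrm{pp}}B$ for \emph{every} $\mu$, and nothing is separated. Your indexing is also off by one: conjunctions in $\Lambda^{\kappa^+,\aleph_0}$ have size at most $\kappa$, so a conjunction of $\kappa^+$ conditions is not seen by $\leq^{(\kappa^+,\infty)}_{\mathrm{pp}}$. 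More basically, establishing $A\leq^{(\kappa,\infty)}_{\mathrm{pp}}B$ means controlling every formula of the class, not a single type, and you give no method for that.

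The missing idea is to separate the relations by a cardinal invariant rather than by explicit pairs. The paper uses the $p^\theta$-isotype relations $\leq^{p^\theta}_{\mathrm{iso}}$ and proves $\mathrm{LS}(\zmod,\leq^{p^\theta}_{\mathrm{iso}})=\theta$. Take a $p$-group $B$ of $p$-length $\theta+1$ and $b\in p^\theta B\setminus p^{\theta+1}B$. Any $A\leq^{p^\theta}_{\mathrm{iso}}B$ containing $b$ satisfies $b\in p^\theta A\setminus p^{\theta+1}A$, so its $p$-length is at least $\theta+1$, and therefore $|A|\geq\theta$. Relations with different LS numbers are different, so the chain is strict.

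Your family could be rescued the same way. A stronger relation has LS number at least as large, and $\leq^{(\theta^+,\infty)}_{\mathrm{pp}}\subseteq\leq^{p^\theta}_{\mathrm{iso}}$ by Proposition~\ref{iso_nice}. So the LS numbers along the monotone family $\leq^{(\kappa,\infty)}_{\mathrm{pp}}$ are unbounded, and the family takes a proper class of distinct values, with no need for strictness at each step. But this requires exactly the isotype/$p$-length computation, which your proposal lacks.
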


 We encourage the reader to look at Appendix \ref{app-A} where we give a depiction of~$\mathscr{L}_{\mathbb{Z}}$.

 Finally, a surprising result is that there is a relation $\prec \in \mathscr{L}_{\mathbb Z}$ below purity  such that $\prec$  cannot be characterized by infinitary $\mrm{pp}$-formulas (see Theorem \ref{n-syntac}). This result is important as it 
 shows that the study of $\mathscr{L}_{\mathbb Z}$ is not simply the study of binary relations generated by infinitary $\mrm{pp}$-formulas.

Taken together, these results reveal a global contrast: stability is widespread both below and above purity, while amalgamation fails on a broad region above purity, and already for abelian groups the resulting lattice is a proper class.

The paper is divided into five sections. Section 2 has some preliminaries and basic results, including an introduction to infinitary $\mrm{pp}$-formulas. Section 3 introduces the main object of study of this paper and presents some of its basic lattice-theoretic properties. Section 4 has important model-theoretic results for broad regions of the sublattices $\mathscr{L}^1_{R}$ and $\mathscr{L}^2_{R}$. Section 5 contains several results on $\mathscr{L}_{\mathbb{Z}}$.

\section{Preliminaries and basic results}

In this section, we present a quick introduction to both the model theory of modules  and abstract elementary classes.

\subsection{Model theory of modules}

All rings considered in this paper are associative with identity, and all modules are left $R$-modules unless stated otherwise. For a fixed ring $R$, we denote the class of $R$-modules by $\rmod$. For $A,B\in\rmod$, we write $A\leq B$ if \emph{$A$ is a submodule of $B$}, and $A\leq_\oplus B$ if \emph{$A$ is a direct summand of $B$}. We write $\operatorname{card}(R)$ for the cardinality of the ring  $R$.

Throughout this subsection, $R$ is a fixed ring. The language of $R$-modules is
$\tau_R=\{0,+,-\}\cup\{r\cdot\mid r\in R\},$
where for each $r\in R$, the symbol $r\cdot$ is interpreted as scalar multiplication by $r$.

Following \cite[Definition 2.2]{shelah}, we introduce \emph{infinitary $\mrm{pp}$-formulas}.

\begin{definition}
\label{deflambdaformulas}
Let $\lambda\leq \kappa$ be infinite cardinals. For every ordinal $\epsilon<\lambda$, we define by induction on $\alpha$ a set $\Lambda_{\alpha,\epsilon}^{\kappa,\lambda}$ of formulas $\varphi(\bar x)$ in $\mathfrak{L}_{\kappa, \lambda}(\tau_R)$ with $|\bar x| = \epsilon$ as follows:
\begin{enumerate}[(1)]
\item For $\alpha = 0$, we define $\Lambda_{\alpha, \epsilon}^{\kappa, \lambda}$ as the set of all equations of the form
\[
\sum_{\zeta < \epsilon} r_\zeta x_\zeta = 0
\]
with $r_\zeta \in R$ and  $r_\zeta\neq 0$ for only finitely many $\zeta<\epsilon$.
\item For $\alpha$ a limit ordinal, we define $\Lambda_{\alpha, \epsilon}^{\kappa, \lambda} = \bigcup_{\beta < \alpha} \Lambda_{\beta, \epsilon}^{\kappa, \lambda}$.
\item For $\alpha = \beta + 1$, we define $\Lambda_{\alpha, \epsilon}^{\kappa, \lambda}$ as the set of formulas of the form
\[
\varphi(\bar x) = \exists \bar y \bigwedge \{\psi(\bar x, \bar y) \mid \psi(\bar x, \bar y) \in \Psi\}
\]
for some $|\bar y| = \zeta < \lambda$\footnote{We allow $\zeta=0$.} and some $\Psi \subseteq \Lambda_{\beta, \epsilon + \zeta}^{\kappa, \lambda}$ with $|\Psi| < \kappa$.
\end{enumerate}
\end{definition}

\begin{remark}
We are primarily interested in the case when $\lambda=\aleph_0$.
\end{remark}

\begin{definition}\
\begin{enumerate}[(1)]
\item We define $\Lambda_{\alpha}^{\kappa, \lambda} =\bigcup_{\epsilon < \lambda} \Lambda_{\alpha, \epsilon}^{\kappa, \lambda}$.
\item We define $\Lambda_{\alpha}^{\infty, \lambda} = \bigcup \{ \Lambda_{\alpha}^{\kappa, \lambda} \, \mid \, \kappa \in \mrm{Card}\}$.
\item We define $\Lambda_{\infty}^{\kappa, \lambda} = \bigcup \{ \Lambda_{\alpha}^{\kappa, \lambda} \, \mid \, \alpha \in \mrm{Ord}\}$.

\item We define $\Lambda_{\infty}^{\infty, \infty} = \bigcup \{ \Lambda_{\alpha}^{\kappa, \lambda} \, \mid \, \alpha \in \mrm{Ord}, \kappa, \lambda \in \mrm{Card}\}$.
\end{enumerate}

\end{definition}

We say that $\varphi(\bar{x}),\psi(\bar{x})\in \mathfrak{L}_{\infty, \infty}(\tau_R)$ are \emph{equivalent} (or, more precisely, equivalent relative to the theory of $R$-modules) if, for every $R$-module $A$ and every $\bar a\in A^{|\bar x|}$, we have $A\models\varphi(\bar a)$ if and only if $A\models\psi(\bar a)$.

\begin{definition}
$\varphi(\bar{x})$ is a first-order positive primitive formula (a $\mrm{pp}$-formula, for short) if it is a first-order formula in the language of $R$-modules and is equivalent to a formula in $\Lambda_{\omega}^{\aleph_0, \aleph_0}$.
\end{definition}

We say that $A$ is a \emph{pure submodule} of $B$, denoted by $A\pleq B$, if $A\leq B$ and, for every first-order $\mrm{pp}$-formula $\varphi(\bar x)$ and every $\bar a\in A^{|\bar x|}$, we have $A\models\varphi(\bar a)$ if and only if $B\models\varphi(\bar a)$. Equivalently, every finite system of $R$-linear equations with right-hand side in $A$ that is solvable in $B$ is already solvable in $A$.

The following two results study the syntactic structure of infinitary $\mrm{pp}$-formulas.

\begin{proposition}\label{equiv-exist}
Let $\varphi(\bar x)\in\Lambda_{\alpha}^{\kappa,\aleph_0}$, where $\bar x=(x_0,\ldots,x_{n-1})$, $\kappa$ is a cardinal, and $\alpha$ is an ordinal. Then $\varphi(\bar x)$ is equivalent, relative to the theory of $R$-modules, to a formula of the form
\[
\exists\bar v\bigwedge_{i<\lambda}\delta_i(\bar v,\bar x)=0,
\]
with $\lambda\leq\kappa$\footnote{If $\kappa$ is regular, then $\lambda<\kappa$.}, $\bar v=(v_i)_{i<\lambda}$, and, for every $i<\lambda$,
\[
\delta_i(\bar v,\bar x)\doteq
\sum_{\ell<\lambda}r_{i,\ell}v_\ell+
\sum_{j<n}q_{i,j}x_j,
\]
with $r_{i,\ell},q_{i,j}\in R$ and for each $i < \lambda$, $r_{i,\ell}\neq0$ for only finitely many $\ell<\lambda$. In particular, $\varphi(\bar x)$ is equivalent to a formula in $\Lambda^{\lambda^+,\lambda^+}_1$.
\end{proposition}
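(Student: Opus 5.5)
We argue by induction on $\alpha$, proving the stronger statement for every finite length $n$ of $\bar x$ simultaneously, since the inductive step passes from formulas in $\bar x$ to formulas in $\bar x\bar y$. The target normal form is a single existential quantifier over a block of at most $\kappa$ variables, followed by a conjunction of at most $\kappa$ linear equations, each involving only finitely many variables.

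\emph{Base case and limit case.} For $\alpha=0$, a formula is one equation $\sum_{j<n} q_j x_j=0$. This is already in normal form with $\lambda=1$ and no $\bar v$ actually occurring, or with one dummy $v_0$ whose coefficients are all zero. Limit stages are immediate, because $\Lambda_{\alpha}$ is the union of the earlier $\Lambda_\beta$.

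\emph{Successor case $\alpha=\beta+1$.} Write $\varphi(\bar x)=\exists\bar y\bigwedge_{\psi\in\Psi}\psi(\bar x,\bar y)$, where $\bar y$ is finite (since $\lambda=\aleph_0$) and $|\Psi|<\kappa$. By induction, each $\psi\in\Psi$ is equivalent to
\[
\exists\bar v^{\psi}\bigwedge_{i<\lambda_\psi}\delta^{\psi}_i(\bar v^{\psi},\bar x,\bar y)=0 \qquad\text{with } \lambda_\psi\leq\kappa .
\]
First rename the bound variables so that the blocks $\bar v^\psi$ are pairwise disjoint and disjoint from $\bar x\bar y$. Then pull all existential quantifiers to the front. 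This is sound because a conjunction of existentials over disjoint variable blocks is equivalent to the existential over the union of the blocks (use choice to pick witnesses for each $\psi$ simultaneously). The result is
\[
\exists\bar y\,\exists(\bar v^\psi)_{\psi\in\Psi}\bigwedge_{\psi\in\Psi}\bigwedge_{i<\lambda_\psi}\delta^\psi_i=0 .
\]
Next merge $\bar y$ and all the $\bar v^\psi$ into one block $\bar v$, reindexed by a cardinal. Each equation still mentions only finitely many variables: the finitely many $v^\psi_\ell$ it had before, plus some of the finitely many $y$'s. So each equation has the form $\sum r_{i,\ell}v_\ell+\sum q_{i,j}x_j=0$ with almost all $r_{i,\ell}=0$.

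\emph{Cardinality bound.} The number of new variables and of equations is at most $|\bar y|+\sum_{\psi\in\Psi}\lambda_\psi\leq\aleph_0+|\Psi|\cdot\kappa\leq\kappa$. If the two counts differ, pad the smaller index set with trivial equations $0=0$ or with dummy variables, so that both are indexed by the same $\lambda\leq\kappa$. This padding keeps the formula equivalent.

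\emph{Footnote for regular $\kappa$.} Here the induction hypothesis should be strengthened to $\lambda_\psi<\kappa$. The base case gives this when $\kappa$ is infinite. In the successor step, $|\Psi|<\kappa$ and each $\lambda_\psi<\kappa$, so regularity gives $\sum_{\psi}\lambda_\psi<\kappa$, and hence $\lambda<\kappa$.

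\emph{The "in particular" clause.} The normal form is one existential over fewer than $\lambda^+$ variables applied to a conjunction of fewer than $\lambda^+$ basic equations. Since $\aleph_0\leq\lambda<\lambda^+$, this is an element of $\Lambda^{\lambda^+,\lambda^+}_1$.

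The main obstacle is the bookkeeping in the successor step. The equivalence that moves the existentials outside an infinitary conjunction needs disjoint bound variables and the axiom of choice. The variable count must be controlled so that $\lambda\leq\kappa$, and for regular $\kappa$ it must actually stay strictly below $\kappa$.
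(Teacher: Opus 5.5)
Your proof is correct and follows the paper's argument: induction on $\alpha$ uniformly in the number of free variables, where the successor step makes the witness blocks $\bar v^\psi$ pairwise disjoint and pulls all existential quantifiers to the front. You also supply bookkeeping the paper leaves implicit, all of it sound: absorbing the finite $\bar y$ into $\bar v$, padding so that variables and equations share one index $\lambda$, and the regularity argument for the footnote. One small point on the ``in particular'' clause: when $\lambda$ comes out finite, first pad to $\lambda\geq\aleph_0$, since otherwise $\lambda^+$ is not an infinite cardinal.
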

\begin{proof}
The final assertion follows directly from the main statement. We prove by induction on $\alpha$ that the conclusion holds for every natural number $n$ and every $\varphi(\bar x)\in\Lambda_{\alpha,n}^{\kappa,\aleph_0}$.

\smallskip \noindent If $\alpha=0$ or $\alpha$ is a limit ordinal, there is nothing to prove. Assume that $\alpha=\beta+1$. Then
\[
\varphi(\bar x)=\exists\bar y\bigwedge\{\psi(\bar x,\bar y)\mid \psi(\bar x,\bar y)\in\Psi\}
\]
for some $|\bar y|=m<\aleph_0$ and some $\Psi\subseteq\Lambda_{\beta,n+m}^{\kappa,\aleph_0}$ with $|\Psi|<\kappa$. By the induction hypothesis, each $\psi(\bar x,\bar y)\in\Psi$ is equivalent to
\[
\exists\bar v \bigwedge_{i<\lambda_\psi}
\delta_{i,\psi}(\bar v,\bar x,\bar y)=0,
\]
where $\lambda_\psi\leq\kappa$ and
\[
\delta_{i,\psi}(\bar v,\bar x,\bar y)\doteq
\sum_{\ell<\lambda_\psi}r_{i,\ell,\psi}v_{\ell}
+\sum_{j<n}q_{i,j,\psi}x_j
+\sum_{j<m}s_{i,j,\psi}y_j,
\]
with $r_{i,\ell,\psi},q_{i,j,\psi},s_{i,j,\psi}\in R$ and for each $i < \lambda$, $r_{i,\ell,\psi}\neq0$ for only finitely many $\ell<\lambda_\psi$. After making the tuples $\bar v$ pairwise disjoint by tagging them with $\psi$ as $\bar{v}_\psi$ and setting $\lambda=\sum_{\psi\in\Psi}\lambda_\psi\leq\kappa$, we obtain the required equivalent formula
\[
\exists\bar y\exists(\bar v_\psi)_{\psi\in\Psi}
\bigwedge_{\psi\in\Psi}\bigwedge_{i<\lambda_\psi}
\delta_{i,\psi}(\bar v_\psi,\bar x,\bar y)=0.
\]
\end{proof}

\begin{proposition}\label{equiv-kappa}
Let $\varphi(\bar x)\in\Lambda_{\alpha}^{\kappa,\aleph_0}$, where $\bar x=(x_0,\ldots,x_{n-1})$, $\kappa$ is a cardinal, and $\alpha$ is an ordinal. If $\kappa$ is regular, then $\varphi(\bar x)$ is equivalent, relative to the theory of $R$-modules, to a formula in $\Lambda_{\kappa}^{\kappa,\aleph_0}$. If $\kappa$ is singular, then it is equivalent to a formula in $\Lambda_{\kappa^+}^{\kappa,\aleph_0}$.
\end{proposition}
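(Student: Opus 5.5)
The plan is to show that the ordinal index $\alpha$ in $\Lambda_{\alpha}^{\kappa,\aleph_0}$ can always be lowered to the actual syntactic depth of the formula. That depth is automatically small, because every conjunction has fewer than $\kappa$ conjuncts. In fact I expect the formula itself to already lie in $\Lambda_{\kappa}^{\kappa,\aleph_0}$ (resp. $\Lambda_{\kappa^+}^{\kappa,\aleph_0}$). So no genuine rewriting is needed, and "equivalent" holds trivially. I would not go through the normal form of Proposition \ref{equiv-exist}. Turning its $\lambda$ existential variables back into finitely many variables per quantifier block is awkward, whereas the direct rank argument avoids this.

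First, define a syntactic rank $\mathrm{rk}(\varphi)$ for $\varphi\in\Lambda_{\infty}^{\kappa,\aleph_0}$ by recursion on the formula. Equations have rank $0$. For $\varphi=\exists\bar y\bigwedge\Psi$ with $\Psi\neq\emptyset$, set
\[
\mathrm{rk}(\varphi)=\sup\{\mathrm{rk}(\psi)+1\mid\psi\in\Psi\}.
\]
For $\Psi=\emptyset$, set $\mathrm{rk}(\varphi)=1$. Second, check by induction on $\alpha$ that if $\varphi\in\Lambda_{\alpha,\epsilon}^{\kappa,\aleph_0}$ then $\mathrm{rk}(\varphi)\leq\alpha$. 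Conversely, check by induction on the rank that every $\varphi\in\Lambda_{\infty,\epsilon}^{\kappa,\aleph_0}$ of rank $\gamma$ belongs to $\Lambda_{\gamma,\epsilon}^{\kappa,\aleph_0}$. For the converse:
\begin{itemize}
\item If $\varphi=\exists\bar y\bigwedge\Psi$, each $\psi\in\Psi$ lies in $\Lambda_{\mathrm{rk}(\psi)}\subseteq\Lambda_{\gamma'}$, where $\gamma'=\sup_\psi\mathrm{rk}(\psi)$.
\item The sets $\Lambda_\beta$ increase with $\beta$; this is a small side induction using the limit clause.
\item Then either $\gamma=\gamma'+1$, in which case $\varphi\in\Lambda_{\gamma'+1}$. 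Or $\gamma$ is a limit reached as a supremum, and then I must place $\varphi$ at some successor $\le\gamma$. This needs care: if $\gamma$ is a limit not attained, then $\varphi\in\Lambda_{\gamma+1}$ only. So I will just aim for $\varphi\in\Lambda_{\mathrm{rk}(\varphi)+1}$, which suffices below.
\end{itemize}

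Third, and this is the key step, show by well-founded induction on formulas that $\mathrm{rk}(\varphi)<\kappa$ when $\kappa$ is regular, and $\mathrm{rk}(\varphi)<\kappa^+$ in general. Indeed, $\varphi=\exists\bar y\bigwedge\Psi$ has $|\Psi|<\kappa$. By the induction hypothesis each $\mathrm{rk}(\psi)+1<\kappa$ (resp. $<\kappa^+$). A supremum of fewer than $\kappa$ ordinals below a regular $\kappa$ is below $\kappa$. A supremum of at most $\kappa$ ordinals below $\kappa^+$ is below $\kappa^+$, since $\kappa^+$ is regular. Since $\kappa$ is an infinite cardinal, $\mathrm{rk}(\varphi)+1<\kappa$ (resp. $<\kappa^+$). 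Combined with the second step and monotonicity, this gives $\varphi\in\Lambda_{\kappa}^{\kappa,\aleph_0}$, respectively $\varphi\in\Lambda_{\kappa^+}^{\kappa,\aleph_0}$.

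The main obstacle is the bookkeeping in the second step: making the rank/stage correspondence precise in the presence of the limit clause, and proving monotonicity $\Lambda_\beta\subseteq\Lambda_{\beta'}$ for $\beta\le\beta'$. Monotonicity requires showing $\Lambda_\beta\subseteq\Lambda_{\beta+1}$. An equation $\delta$ is rewritten as $\exists\,\emptyset\bigwedge\{\delta\}$, which is equivalent to it, so here the "equivalent" clause is genuinely used. I would handle this first, as a separate claim proved by induction on $\beta$, and only then run the rank argument.
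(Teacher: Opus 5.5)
Your proposal is correct and rests on the same key point as the paper's proof: a supremum of fewer than $\kappa$ ordinals below a regular $\kappa$ (resp.\ below the regular $\kappa^+$) stays below it. The paper runs this as an induction on the level $\alpha\geq\kappa$, whereas you split it into a rank bound plus a placement lemma, and in doing so you make explicit the monotonicity up to equivalence ($\varphi$ versus $\exists\emptyset\bigwedge\{\varphi\}$) that the paper uses silently when it puts all the $\theta_\psi$ into a common $\Lambda_\gamma$ — just drop your opening remark that no rewriting is needed, since under strict syntax the levels are not literally cumulative, as your own last paragraph concedes.
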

\begin{proof}
We prove the regular case; the singular case is analogous. We show by induction on $\alpha\geq\kappa$ that every $\varphi(\bar x)\in\Lambda_{\alpha}^{\kappa,\aleph_0}$ is equivalent to a formula in $\Lambda_{\kappa}^{\kappa,\aleph_0}$.

\smallskip \noindent If $\alpha=\kappa$ or $\alpha$ is a limit ordinal, there is nothing to prove. Assume that $\alpha=\beta+1$. Then
\[
\varphi(\bar x)=\exists\bar y\bigwedge\{\psi(\bar x,\bar y)\mid \psi(\bar x,\bar y)\in\Psi\}
\]
for some $|\bar y|=m<\aleph_0$ and some $\Psi\subseteq\Lambda_{\beta}^{\kappa,\aleph_0}$ with $|\Psi|<\kappa$. By the induction hypothesis, each $\psi(\bar x,\bar y)\in\Psi$ is equivalent to a formula $\theta_\psi(\bar x,\bar y)\in\Lambda_{\kappa}^{\kappa,\aleph_0}$. Choose $\alpha_\psi<\kappa$ such that $\theta_\psi(\bar x,\bar y)\in\Lambda_{\alpha_\psi}^{\kappa,\aleph_0}$, and let $\gamma=\sup_{\psi\in\Psi}\alpha_\psi<\kappa$, using the regularity of $\kappa$. Then
\[
\exists\bar y\bigwedge\{\theta_\psi(\bar x,\bar y)\mid\psi\in\Psi\}
\in\Lambda_{\gamma+1}^{\kappa,\aleph_0}\subseteq\Lambda_{\kappa}^{\kappa,\aleph_0},
\]
and this formula is equivalent to $\varphi(\bar x)$.
\end{proof}

  \begin{remark}\label{equiv-remark}
 The arguments of Propositions~\ref{equiv-exist} and~\ref{equiv-kappa} show that every formula in $\Lambda_{\infty}^{\aleph_0,\aleph_0}$ is equivalent to a formula in $\Lambda_{1}^{\aleph_0,\aleph_0}$.
 \end{remark}

The following result will be very useful.

\begin{proposition}\label{preserve}
Let $\lambda\leq\kappa$ be cardinals, let $\alpha$ and $\epsilon$ be ordinals, and let $\varphi(\bar x)\in\Lambda_{\alpha,\epsilon}^{\kappa,\lambda}$.
\begin{enumerate}[(1)]
\item If $A\models\varphi(\bar a)$ for $\bar a\in A^\epsilon$ and $f:A\to B$ is an $R$-homomorphism, then $B\models\varphi(f(\bar a))$.
\item The set $\varphi(A)=\{\bar a\in A^\epsilon\mid A\models\varphi(\bar a)\}$ is a subgroup of $A^\epsilon$.
\end{enumerate}
\end{proposition}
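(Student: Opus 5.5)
Both parts go by induction on $\alpha$, proved simultaneously for all $\epsilon$ (and the fixed $\kappa,\lambda$). For every $\alpha$ the induction hypothesis has to be available for formulas with any number $\epsilon+\zeta$ of free variables, because the successor clause enlarges the variable set. Unlike Proposition~\ref{equiv-exist}, I will not reduce to a normal form: $\epsilon$ may be infinite here, so I work directly with Definition~\ref{deflambdaformulas}.

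For (1), the base case $\alpha=0$ is an equation $\sum_{\zeta<\epsilon} r_\zeta x_\zeta=0$ with only finitely many $r_\zeta\neq 0$. The sum is therefore a genuine finite sum, and $f$ preserves it by $R$-linearity: $\sum r_\zeta f(a_\zeta)=f\bigl(\sum r_\zeta a_\zeta\bigr)=f(0)=0$. For limit $\alpha$ the formula already lies in some $\Lambda_{\beta,\epsilon}^{\kappa,\lambda}$ with $\beta<\alpha$, so there is nothing to do. For $\alpha=\beta+1$, write $\varphi(\bar x)=\exists\bar y\bigwedge\Psi$. Given $A\models\varphi(\bar a)$, pick a witness $\bar b\in A^{\zeta}$ with $A\models\psi(\bar a,\bar b)$ for all $\psi\in\Psi$. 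By the induction hypothesis applied at level $\beta$ with $\epsilon+\zeta$ variables, $B\models\psi(f(\bar a),f(\bar b))$ for every $\psi\in\Psi$. Hence $f(\bar b)$ witnesses $B\models\varphi(f(\bar a))$.

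For (2), I would prove by the same induction that $\varphi(A)$ contains $\bar 0$ and is closed under subtraction.
\begin{enumerate}[(i)]
\item The base case is immediate, since the solution set of a finitary homogeneous linear equation is a subgroup.
\item The limit case is again trivial.
\item In the successor case, $\varphi(A)$ is the projection onto the first $\epsilon$ coordinates of $\bigcap_{\psi\in\Psi}\psi(A)\subseteq A^{\epsilon+\zeta}$. This is an intersection of subgroups (by the induction hypothesis), hence a subgroup, and it is nonempty since the empty conjunction gives all of $A^{\epsilon+\zeta}$. The projection $A^{\epsilon+\zeta}\to A^\epsilon$ is a group homomorphism, and the image of a subgroup under a homomorphism is a subgroup.
\end{enumerate}
Alternatively, (2) follows from (1): $\bar 0\in\varphi(A)$ by applying (1) to the zero map $0\to A$, since the zero module satisfies every such formula by induction. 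Closure under subtraction can be obtained by applying (1) to the homomorphism $A\oplus A\to A$, $(u,v)\mapsto u-v$, once one knows that $\varphi$ holds of $(\bar a,\bar a')$ in $A\oplus A$. That fact needs its own induction, so the direct argument is cleaner and I would use it.

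The only point needing care is the bookkeeping of the variable set: the inductive statement must quantify over all $\epsilon$ (including infinite ones below $\lambda$) so that it applies to $\Psi\subseteq\Lambda_{\beta,\epsilon+\zeta}^{\kappa,\lambda}$. Apart from that, there is no real obstacle.
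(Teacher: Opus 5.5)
Your proposal is correct and follows essentially the same route as the paper: induction on $\alpha$, stated uniformly for all $\epsilon<\lambda$, with the successor step pushing a witness tuple through $f$ and applying the hypothesis at level $\beta$ with $\epsilon+\zeta$ variables. The paper only says that (2) is ``analogous,'' and your successor step for (2) (the projection of an intersection of subgroups, including the case of an empty conjunction) is exactly that analogous induction.
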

\begin{proof}
We prove (1); the proof of (2) is analogous. Let $f:A\to B$ be an $R$-homomorphism. We show by induction on $\alpha$ that, for every ordinal $\epsilon<\lambda$ and every $\varphi(\bar x)\in\Lambda_{\alpha,\epsilon}^{\kappa,\lambda}$, if $A\models\varphi(\bar a)$ for $\bar a\in A^\epsilon$, then $B\models\varphi(f(\bar a))$.

\smallskip \noindent If $\alpha=0$, the result is clear because $\varphi(\bar x)$ is a linear equation; if $\alpha$ is a limit ordinal, it follows immediately from the induction hypothesis. Assume that $\alpha=\beta+1$ and
\[
\varphi(\bar x)=\exists\bar y\bigwedge\{\psi(\bar x,\bar y)\mid \psi(\bar x,\bar y)\in\Psi\},
\]
for some $|\bar y|=\zeta<\lambda$ and some $\Psi\subseteq\Lambda_{\beta,\epsilon+\zeta}^{\kappa,\lambda}$ with $| \Psi| < \kappa$. Choose $\bar c\in A^\zeta$ such that
$A\models\bigwedge\{\psi(\bar a,\bar c)\mid\psi(\bar x,\bar y)\in\Psi\}.$
By the induction hypothesis, $B\models\psi(f(\bar a),f(\bar c))$ for every $\psi\in\Psi$. Hence $B\models\varphi(f(\bar a))$.
\end{proof}

\begin{definition}
Given $\Sigma \subseteq \Lambda_{\infty}^{\infty, \infty}$, let $\leq_\Sigma$ be the binary relation given in $\rmod$ by $A \leq_\Sigma B$ if and only if $A \leq B$ and for every $\varphi(\bar{x}) \in \Sigma$ and every $\bar{a} \in A^{|\bar{x}|}$, $A \models \varphi(\bar{a})$ if and only if $B \models \varphi(\bar{a})$.
\end{definition}

\begin{corollary}\label{ref_sums} Assume $\Sigma \subseteq \Lambda_{\infty}^{\infty, \infty}$. If $A \leq_{\oplus} B$, then $A \leq_{\Sigma} B$.
\end{corollary}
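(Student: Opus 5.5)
The plan is to use Proposition \ref{preserve}(1) twice: once with the inclusion $A\hookrightarrow B$ and once with the projection $B\to A$.

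Fix $A\leq_{\oplus} B$. We may write $B=A\oplus C$ for some submodule $C$. Let $\iota\colon A\to B$ be the inclusion and $\pi\colon B\to A$ the projection along $C$. Both are $R$-homomorphisms, and $\pi\circ\iota=\mathrm{id}_A$. Certainly $A\leq B$, so it remains to check that every $\varphi(\bar x)\in\Sigma$ and every $\bar a\in A^{|\bar x|}$ satisfy $A\models\varphi(\bar a)\iff B\models\varphi(\bar a)$.

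Take $\varphi(\bar x)\in\Sigma$. Since $\Sigma\subseteq\Lambda_{\infty}^{\infty,\infty}$, there are a cardinal pair $\lambda\leq\kappa$ and an ordinal $\alpha$ with $\varphi\in\Lambda_{\alpha,\epsilon}^{\kappa,\lambda}$, where $\epsilon=|\bar x|$. If $\lambda>\kappa$ in the indexing, we simply enlarge $\kappa$, which only enlarges the formula classes. This puts us in the setting of Proposition \ref{preserve}.

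For the forward direction, suppose $A\models\varphi(\bar a)$. Applying Proposition \ref{preserve}(1) to $\iota$ gives $B\models\varphi(\iota(\bar a))$, that is, $B\models\varphi(\bar a)$.

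For the converse, suppose $B\models\varphi(\bar a)$. Applying Proposition \ref{preserve}(1) to $\pi$ gives $A\models\varphi(\pi(\bar a))$. Since $\bar a$ lies in $A$, we have $\pi(\bar a)=\bar a$, so $A\models\varphi(\bar a)$.

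There is no real obstacle here. The only point needing care is the bookkeeping that places each $\varphi\in\Sigma$ in some $\Lambda_{\alpha,\epsilon}^{\kappa,\lambda}$ with $\lambda\leq\kappa$, so that Proposition \ref{preserve} applies as stated.
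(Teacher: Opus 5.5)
Your proposal is correct and matches the paper's proof: the paper also takes the retraction $\pi\colon B\to A$ that is the identity on $A$ and invokes Proposition~\ref{preserve}(1). The only difference is that you spell out the forward direction via the inclusion $A\hookrightarrow B$, which the paper leaves implicit.
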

\begin{proof}
Since $A\leq_{\oplus}B$, there is an $R$-homomorphism $\pi:B\to A$ such that $\pi\restriction A=\mrm{id}_A$. The result now follows from Proposition~\ref{preserve}.
\end{proof}

The following classical construction from module theory will be useful.

\begin{remark}\label{pushout}
The pushout of a pair of inclusions $A\leq B$ and $A\leq C$ in the category of $R$-modules is a triple
\[
(B\oplus_A C=(B\oplus C)/\widehat A,\; f:B\to B\oplus_A C, \; g:C\to B\oplus_A C)
.
\]
with $\widehat A=\{(a,-a)\mid a\in A\}$, $f: b \mapsto [(b,0)]$ and $g: c \mapsto [(0,c)]$.

Observe that $f,g$ are monomorphisms. Moreover, whenever $f':B\to D$ and $g':C\to D$ are $R$-homomorphisms satisfying $f'\restriction A=g'\restriction A$, the unique $R$-homomorphism $t:B\oplus_A C\to D$ such that $t\circ f=f'$ and $t\circ g=g'$ is given by $
t([(b,c)])=f'(b)+g'(c)$.


\end{remark}

\subsection{Abstract elementary classes}

Abstract elementary classes (AECs) were introduced by Shelah in the seventies \cite{sh88}. We quickly introduce the notions of AECs used in this paper. For more details,  see \cite[\S\S 4--8]{baldwinbook09}.

The following definition is due to Grossberg.
\begin{definition}\label{def_AC}
Let $\mathbf{K}$ be a class of $L$-structures and let $\preccurlyeq$ be a binary relation on $\mathbf{K}$. We say that $(\mathbf{K}, \preccurlyeq)$ is an {\em abstract class} if the following conditions are satisfied:
\begin{enumerate}[(1)]
	\item $\mathbf{K}$ and $\preccurlyeq$ are closed under isomorphisms, i.e., if $A\in \mathbf{K}$ and $f:A\to B$ is an isomorphism then $B\in \mathbf{K}$ and if $C\in \mathbf{K}$ is such that $C\preccurlyeq A$ then $f(C)\preccurlyeq B$;
	\item if $A \preccurlyeq B$, then $A$ is an $L$-submodel of $B$ (written $A \leq B$);
	\item the relation $\preccurlyeq$ is a partial order on $\mathbf{K}$.
\end{enumerate}
\end{definition}

\begin{definition}
Let $(\mathbf{K},\preccurlyeq)$ be an abstract class, and let $A,B\in\mathbf{K}$. An embedding $f:A\to B$ is a \emph{strong embedding} (or \emph{$\prec$-embedding}) if $f(A)\preccurlyeq B$.
\end{definition}

\begin{definition}\label{def_AEC}
Let $\mathbf K$ be a class of $L$-structures and let $\preccurlyeq$ be a binary relation on $\mathbf K$. We say that $(\mathbf K,\preccurlyeq)$ is an \emph{abstract elementary class} (AEC) if it is an abstract class, that is, if it satisfies Definition~\ref{def_AC}(1)--(3), and the following additional conditions hold:
	\begin{enumerate}[(1)]\setcounter{enumi}{3}
		\item \emph{Tarski--Vaught axioms}: If $(A_i)_{i < \delta}$ is an increasing continuous $\preccurlyeq$-chain of structures in $\mathbf{K}$ for $\delta$ a limit ordinal, then:
		\begin{enumerate}[(4.1)]
			\item $\bigcup_{i < \delta} A_i \in \mathbf{K}$;
			\item for each $j < \delta$, $A_j \preccurlyeq \bigcup_{i < \delta} A_i$;
			\item \emph{Smoothness axiom}: if $A_i \preccurlyeq B$ for all $i<\delta$, then $\bigcup_{i < \delta} A_i \preccurlyeq B$.
	\end{enumerate}
			\item \emph{Coherence axiom}: If $A, B, C \in \mathbf{K}$, $A \preccurlyeq C$, $B \preccurlyeq C$ and $A \leq B$, then $A \preccurlyeq B$;
			\item \emph{L\"owenheim--Skolem--Tarski axiom}: There is a cardinal $\lambda\geq |L|+\aleph_0$ such that, whenever $B\in\mathbf K$ and $X\subseteq B$, there is $A\in\mathbf K$ with $X\subseteq A\preccurlyeq B$ and $|A|\leq |X|+\lambda$. The \emph{L\"owenheim--Skolem number} of $\mathcal K$, denoted by $\mathrm{LS}(\mathcal K)$, is the least such cardinal $\lambda\geq |L|+\aleph_0$.
\end{enumerate}
When $(\mathbf{K},\preccurlyeq)$ is an AEC, we refer to $\preccurlyeq$ as a \emph{strong submodel relation} on the class $\mathbf{K}$.
\end{definition}

\begin{remark}
In this paper, $L$ is always the language of $R$-modules, and $\mathbf K$ is usually the class of all $R$-modules.
\end{remark}

Throughout the rest of this subsection, $\mathcal K=(\mathbf K,\preccurlyeq)$ is an AEC.

\begin{definition}\label{amal} $\mathcal K$ has 
\begin{enumerate}[$\bullet$]
\item the \emph{amalgamation property} if every span $A\preccurlyeq B,C$ in $\mathcal{K}$ can be completed to a commutative square of strong embeddings;
\item  the \emph{joint embedding property} if any two models in $\mathbf{K}$ strongly embed into a third model in $\mathbf{K}$;
\item \emph{no maximal models} if every model in $\mathbf{K}$ has a proper strong extension in $\mathbf{K}$. 
\end{enumerate}
\end{definition}

For $\lambda\geq\mathrm{LS}(\mathcal K)$, let $
\mathbf K_\lambda=\{A\in\mathbf K\mid |A|=\lambda\}$.

\begin{definition}\label{notation_types}
Let $A\in\mathbf K$,  $X\subseteq A$, and $\bar b$ be a sequence from $A$. The (orbital or Galois) \emph{type of $\bar b$ over $X$ in $A$}, denoted by $\gtp_{\mathcal K}(\bar b/X;A)$, is the equivalence class of $(\bar b,X,A)$ modulo $E_{\mathcal K}$, where $E_{\mathcal K}$ is the transitive closure of the relation $E_{\mathcal K}^{\mrm{at}}$ defined as follows.

 For $\ell\in\{1,2\}$,  $A_\ell\in\mathbf K$,  $\bar b_\ell \in A_\ell^{<\infty}$, and $X_\ell\subseteq A_\ell$, we set
\[
(\bar b_1,X_1,A_1)\mathrel{E_{\mathcal K}^{\mrm{at}}}(\bar b_2,X_2,A_2)
\]
if $X:=X_1=X_2$ and there are $A_*\in\mathbf K$ and strong embeddings $f_\ell:A_\ell\to A_*$, for $\ell\in\{1,2\}$, such that
$f_1\restriction X=f_2\restriction X=\mrm{id}_X$ and  $f_1(\bar b_1)=f_2(\bar b_2)$.


\end{definition}

Stability is a key model-theoretic dividing line that we study in this paper.

\begin{definition}\label{def_stability}\
\begin{enumerate}[(1)]
\item If $A\in\mathbf K$, let
\[
\gS_{\mathcal K}(A)=\{\gtp_{\mathcal K}(b/A;B)\mid A\preccurlyeq B\in\mathbf K\text{ and }b\in B\}.
\]
\item Let $\lambda\geq\mathrm{LS}(\mathcal K)$. We say that $\mathcal K$ is $\lambda$-stable if $|\gS_{\mathcal K}(A)|\leq\lambda$ for every $A\in\mathbf K_\lambda$.

\item $\mathcal K$ is stable if it is $\mu$-stable for unboundedly many cardinals $\mu$.
\end{enumerate}

\end{definition}

Given $p=\gtp_{\mathcal K}(b/A;B)\in\gS_{\mathcal K}(A)$ and $X\subseteq A$, let $p\restriction X=[(b,X,B)]_{E_{\mathcal K}}$. The following notion was isolated in \cite{tamenessone}.

\begin{definition} $\mathcal{K}$ is $(<\aleph_0)$-tame if, for every $A \in \mathbf{K}$ and $p, q \in \gS_{\mathcal K}(A)$, if $ p \neq q$, then there is $X$ a finite subset of $A$ such that $p \restriction X \neq q \restriction X$.
\end{definition}

\section{The lattice structure of $\mathscr{L}_{R}$}

The main object of this paper is the following lattice.

\begin{definition}\label{main_def}
Let $R$ be a ring and let $\lambda\geq\operatorname{card}(R)+\aleph_0$ be a cardinal. Define $\mathscr{L}_{R,\lambda}$ to be the partially ordered class\footnote{Corollary~\ref{lattice} shows that it is a lattice.} of all binary relations $\preccurlyeq$ on $\rmod$ such that $(\rmod,\preccurlyeq)$ is an AEC, $\mathrm{LS}(\rmod,\preccurlyeq)\leq\lambda$, and $\preccurlyeq$ refines the direct-summand relation. We order $\mathscr{L}_{R,\lambda}$ by reverse inclusion.\footnote{Thus stronger strong submodel relations lie higher in the lattice.} Let
\[
\mathscr{L}_{R}=\bigcup_{\lambda\geq\operatorname{card}(R)+\aleph_0}\mathscr{L}_{R,\lambda}.
\]

\end{definition}

In this section, we establish basic properties of the lattices $\mathscr{L}_{R}$ and $\mathscr{L}_{R,\lambda}$.

\subsection{Basic results}\label{b-results}
Before we proceed it is important to notice that the structure of $\mathscr{L}_{R}$ and $\mathscr{L}_{R, \lambda}$ depends strongly on the ring $R$. For instance, if $R$ is left semisimple, then $\mathscr{L}_{R}$ and $\mathscr{L}_{R, \lambda}$ consists only of the submodule relation.

\begin{proposition}\label{join}
Let $\lambda$ be a cardinal.
\begin{enumerate}[(1)]
\item If $\{\preccurlyeq_i\mid i<\lambda\}\subseteq\mathscr{L}_{R,\lambda}$, then $\bigcap_{i<\lambda}\preccurlyeq_i\in\mathscr{L}_{R,\lambda}$.
\item If $\{\preccurlyeq_i\mid i<\theta\}\subseteq\mathscr{L}_{R}$ for some cardinal $\theta > 0$, then $\bigcap_{i<\theta}\preccurlyeq_i\in\mathscr{L}_{R}$.
\end{enumerate}
\end{proposition}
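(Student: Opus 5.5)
Let $\preccurlyeq \,=\bigcap_{i}\preccurlyeq_i$. We verify the AEC axioms of Definition~\ref{def_AEC} one at a time, and then check that $\preccurlyeq$ lies between the direct-summand and submodule relations. Most verifications are componentwise. The intersection of isomorphism-invariant partial orders contained in $\leq$ is again such a relation. The sandwich condition also holds: $\oleq\subseteq\preccurlyeq_i$ for every $i$, so $\oleq\subseteq\preccurlyeq$. A $\preccurlyeq$-chain is a $\preccurlyeq_i$-chain for every $i$, so its union is a module and each $A_j\preccurlyeq_i\bigcup A_k$ for all $i$. Smoothness follows in the same way, applying smoothness for each $\preccurlyeq_i$. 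For coherence, $A,B\preccurlyeq C$ and $A\leq B$ give $A\preccurlyeq_i B$ for every $i$.

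The only axiom that needs work is Löwenheim--Skolem--Tarski. For (1), let $B$ be a module and $X\subseteq B$, and set $\mu=|X|+\lambda$. We build an increasing sequence $(X_n)_{n<\omega}$ of subsets of $B$ with $X_0=X$ and $|X_n|\leq\mu$. To do this, we enumerate $\lambda\times\omega$ as $(i_k,n_k)_{k<\lambda\cdot\omega}$ so that each $i<\lambda$ occurs cofinally often. We then construct a continuous increasing chain of sets $Y_k$ of size $\leq\mu$: at successor step $k+1$ we pick $A_{k}\preccurlyeq_{i_k}B$ with $Y_k\subseteq A_k$ and $|A_k|\leq|Y_k|+\lambda$, and set $Y_{k+1}=A_k$. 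Since the length is $\lambda\cdot\omega$ and $\lambda\leq\mu$, every $Y_k$ has size $\leq\mu$. Let $A=\bigcup_k Y_k$.

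We must check that $A\preccurlyeq_i B$ for each fixed $i$. Let $(k_m)_{m<\omega}$ be a cofinal sequence of stages where $i_{k_m}=i$; such a sequence exists since $\operatorname{cf}(\lambda\cdot\omega)=\omega$ and $i$ recurs cofinally. Then $A_{k_0}\subseteq A_{k_1}\subseteq\cdots$, and each $A_{k_m}\preccurlyeq_i B$. By coherence for $\preccurlyeq_i$ (using $A_{k_m}\leq A_{k_{m+1}}$), this is a $\preccurlyeq_i$-chain, and its union is $A$. Smoothness then yields $A\preccurlyeq_i B$. Note that the chain is an $\omega$-chain, which is trivially continuous. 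Hence $A\preccurlyeq B$ with $|A|\leq\mu$, so $\mathrm{LS}\leq\lambda$.

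For (2), we choose $\lambda_i$ with $\preccurlyeq_i\in\mathscr{L}_{R,\lambda_i}$ and set $\lambda=\theta+\sup_i\lambda_i$. Each $\preccurlyeq_i$ then lies in $\mathscr{L}_{R,\lambda}$, and (1) applies after reindexing by $\lambda$, repeating relations if $\theta<\lambda$; this is why $\theta>0$ is needed. The main obstacle is the bookkeeping in the LS argument: the closure must be reached as a union of an $\omega$-chain for each $\preccurlyeq_i$ separately, and coherence is the tool that makes the interleaved pieces a genuine $\preccurlyeq_i$-chain.
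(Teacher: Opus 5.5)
Your proof is correct and is essentially the paper's argument. The paper builds a two-dimensional array $A_{i,n}$ ($i<\lambda$, $n<\omega$) that increases lexicographically, which is exactly your chain of length $\lambda\cdot\omega$ with $i_{\lambda\cdot n+\xi}=\xi$; it then uses coherence to make each $(A_{i,n})_{n<\omega}$ a $\preccurlyeq_i$-chain and smoothness to get $A\preccurlyeq_i B$, with part (2) giving the same bound $\theta+\sup_{i<\theta}\mathrm{LS}(\rmod,\preccurlyeq_i)$. The sequence $(X_n)_{n<\omega}$ in your write-up is never used and can be deleted.
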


\begin{proof}
We first prove (1). It is straightforward to verify all the AEC axioms for $(\rmod,\bigcap_{i<\lambda}\preccurlyeq_i)$ except the L\"owenheim--Skolem--Tarski axiom. Let $X\subseteq B\in\rmod$, and set $\mu=|X|+\lambda$.

\smallskip \noindent We recursively construct $\{A_{i,n}\mid i<\lambda,\ n<\omega\}$ inside $B$, starting with $X\subseteq A_{0,0}$, so that:
\begin{enumerate}[(1)]
\item $|A_{i,n}|\leq\mu$ and $A_{i, n} \preccurlyeq_i 	B$ for every $i<\lambda$ and $n<\omega$;
\item $(A_{i,n})_{n<\omega}$ is an increasing $\preccurlyeq_i$-chain for every $i<\lambda$;
\item $A_{i,n}\subseteq A_{j,n}$ whenever $i<j<\lambda$ and $n<\omega$;
\item $\bigcup_{i<\lambda}A_{i,n}\subseteq A_{0,n+1}$ for every $n<\omega$.
\end{enumerate}

\smallskip \noindent The construction follows by recursion on $n$, using the L\"owenheim--Skolem--Tarski axiom in each $(\rmod,\preccurlyeq_i)$ inside $B$ and then the Coherence axiom.

\smallskip \noindent Let $A=\bigcup_{n<\omega}A_{0,n}$. Then $X\subseteq A$ and $|A|\leq\mu$ by Condition~(1). Fix $i<\lambda$. Conditions~(3) and~(4) give $A=\bigcup_{n<\omega}A_{i,n}$, so Conditions~(1) and (2) and the Tarski--Vaught axioms imply that $A\preccurlyeq_i B$. Thus $A\,(\bigcap_{i<\lambda}\preccurlyeq_i)\,B$.

\smallskip \noindent We now prove (2). The same construction works after replacing $\mu$ in Condition~(1) by the size bound 
$\mu=|X|+\theta+\sup_{i<\theta}\mathrm{LS}(\rmod,\preccurlyeq_i).$
Consequently, we have
\[
\mathrm{LS}\left(\rmod,\bigcap_{i<\theta}\preccurlyeq_i\right)
\leq \theta+\sup_{i<\theta}\mathrm{LS}(\rmod,\preccurlyeq_i),
\]
and hence $\bigcap_{i<\theta}\preccurlyeq_i\in\mathscr{L}_{R}$.
\end{proof}

\begin{proposition}\label{meet}
Let $\lambda$ be a cardinal and $0<\theta$ be a cardinal (possibly finite).
\begin{enumerate}[(1)]
\item If $\{\preccurlyeq_i\mid i<\theta\}\subseteq\mathscr{L}_{R,\lambda}$, then this family has a greatest lower bound in $\mathscr{L}_{R,\lambda}$.
\item If $\{\preccurlyeq_i\mid i<\theta\}\subseteq\mathscr{L}_{R}$, then this family has a greatest lower bound in $\mathscr{L}_{R}$.
\end{enumerate}
\end{proposition}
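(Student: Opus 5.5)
The plan is to obtain the greatest lower bound as the intersection of all upper bounds. This is the standard trick that turns arbitrary intersections into arbitrary infima. Recall that the lattice is ordered by reverse inclusion, so a lower bound of $\{\preccurlyeq_i\mid i<\theta\}$ is a relation $\preccurlyeq\in\mathscr{L}_{R,\lambda}$ with $\preccurlyeq_i\subseteq\preccurlyeq$ for every $i$. Let $\mathcal{U}$ be the class of all such lower bounds; the greatest lower bound should be $\bigcap\mathcal{U}$. The submodule relation $\leq$ itself belongs to $\mathscr{L}_{R,\lambda}$: $(\rmod,\leq)$ is an AEC with L\"owenheim--Skolem number $\operatorname{card}(R)+\aleph_0\leq\lambda$, and $\leq_\oplus\subseteq\leq$. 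It contains every $\preccurlyeq_i$, so $\mathcal{U}\neq\emptyset$.

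The first difficulty is that Proposition~\ref{join}(1) only handles intersections of at most $\lambda$ many relations, while $\mathcal{U}$ may be a large family. I would reduce to a set. Every $\preccurlyeq\in\mathscr{L}_{R,\lambda}$ is determined by its restriction to pairs $A\leq B$ with $|B|\leq\lambda$. Indeed, I claim that $A\preccurlyeq B$ iff for every $X\subseteq B$ of size $\leq\lambda$ there are $A'\leq B'\leq B$ of size $\leq\lambda$ with $X\subseteq B'$, $A'=A\cap B'$ and $A'\preccurlyeq B'$.
\begin{itemize}
\item[] For the forward direction, use L\"owenheim--Skolem--Tarski together with Coherence: take $A'\preccurlyeq A$ and $B'\preccurlyeq B$ built in an $\omega$-chain so that $A'=A\cap B'$.
\item[] For the reverse direction, write $B$ as a directed union of such small pieces and use the Tarski--Vaught and Smoothness axioms in their standard directed-system form.
\end{itemize}
Consequently $\mathscr{L}_{R,\lambda}$ has at most $2^{2^\lambda}$ members up to this coding, so $\mathcal{U}$ is essentially a set. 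It may still have more than $\lambda$ members, so Proposition~\ref{join}(1) does not apply verbatim.

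I would therefore redo the proof of Proposition~\ref{join} for an arbitrary family $\mathcal{F}\subseteq\mathscr{L}_{R,\lambda}$, all of whose members have LS number $\leq\lambda$. The isomorphism closure, partial order, Tarski--Vaught, Smoothness and Coherence axioms pass to intersections trivially. The only real issue is L\"owenheim--Skolem--Tarski with the bound $\lambda$.

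For that axiom, fix $X\subseteq B$ with $|X|\leq\lambda$. I want $A\supseteq X$ of size $\leq\lambda$ with $A\preccurlyeq B$ for every $\preccurlyeq\in\mathcal{F}$ simultaneously. The constructions of Proposition~\ref{join} handle $\lambda$ relations at once through a diagonal $\omega\times\lambda$ bookkeeping, which does not obviously extend to more relations. The natural alternative is to use the direct-summand relation as a uniform witness. If $X$ is contained in a direct summand $A$ of $B$ with $|A|\leq\lambda$, then $A\preccurlyeq B$ for all $\preccurlyeq\in\mathcal{F}$ at once. This fails in general, since summands need not be small.

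Hence I expect this uniform L\"owenheim--Skolem step for the infimum to be the main obstacle. The first thing I would try is this. Since each $\preccurlyeq\in\mathcal{F}$ is determined by its restriction to modules of size $\leq\lambda$, the number of distinct restrictions to any fixed small $B'$ is at most $2^{\lambda}$. I would then run the closure construction of Proposition~\ref{join}, at each stage closing under witnesses for all distinct restriction-types. This yields the infimum in $\mathscr{L}_{R}$, proving (2), with LS number at most $2^\lambda$. For (1), I would then argue that the infimum computed in $\mathscr{L}_{R,\lambda}$ is the intersection of those upper bounds whose LS number is $\leq\lambda$. If the uniform bound cannot be pushed down to $\lambda$, I would fall back on defining the meet in $\mathscr{L}_{R,\lambda}$ as the intersection of a cofinal subfamily of size $\leq\lambda$ and invoking Proposition~\ref{join}(1) directly.
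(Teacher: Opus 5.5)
\textbf{There is a genuine gap: the L\"owenheim--Skolem--Tarski axiom is never proved.}

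Your setup is right. In the reverse-inclusion order, the lower bounds are the $\preccurlyeq\in\mathscr{L}_{R,\lambda}$ containing every $\preccurlyeq_i$. The candidate meet is $\bigcap\mathcal{U}$, the submodule relation shows $\mathcal{U}\neq\emptyset$, and every axiom except L\"owenheim--Skolem--Tarski passes to arbitrary intersections.

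For that remaining axiom, you flag it as the main obstacle and then offer only sketches:
\begin{itemize}
\item The ``restriction-type'' closure is not specified; it is unclear what witnesses are to be added. Even granting it, the resulting bound $2^\lambda$ does not put the meet in $\mathscr{L}_{R,\lambda}$.
\item Your plan for (1) merely restates the definition of $\bigcap\mathcal{U}$. The LS number of that intersection is exactly what must be bounded.
\item The fallback assumes a subfamily of $\mathcal{U}$ of size at most $\lambda$ with the same intersection, and you give no reason one exists.
\end{itemize}
So neither (1) nor (2) is established as written.

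\textbf{The missing idea.} What you need is the uniform witness you were looking for with $\leq_\oplus$. By passing from $\mathcal{U}$ to an arbitrary family $\mathcal{F}$, you threw away the feature that supplies it.
\begin{itemize}
\item Every member of $\mathcal{U}$ contains $\preccurlyeq_0$; this is where $\theta>0$ is used. Hence $\preccurlyeq_0\subseteq\bigcap\mathcal{U}$.
\item Given $X\subseteq B$, the L\"owenheim--Skolem--Tarski axiom of $(\rmod,\preccurlyeq_0)$ gives $A\preccurlyeq_0 B$ with $X\subseteq A$ and $|A|\leq|X|+\lambda$.
\item Then $A\preccurlyeq B$ for every $\preccurlyeq\in\mathcal{U}$ at once, i.e.\ $A\mathrel{(\bigcap\mathcal{U})}B$.
\end{itemize}
This is the paper's proof. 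For (2), run the same argument inside $\mathscr{L}_{R}$ with $\mathrm{LS}(\rmod,\preccurlyeq_0)$ in place of $\lambda$. The same remark shows that every lower bound in $\mathscr{L}_R$ already has LS number at most $\mathrm{LS}(\rmod,\preccurlyeq_0)$.

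No reduction of $\mathcal{U}$ to a set, no coding by small restrictions, and no appeal to Proposition~\ref{join} is needed.
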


\begin{proof}
We prove (1); the proof of (2) is identical. Let
\[
\Delta=\{\preccurlyeq\in\mathscr{L}_{R,\lambda}\mid \,
\preccurlyeq_i\subseteq\preccurlyeq\text{ for every }i<\theta\} \text{ and }\preccurlyeq^{*}=\bigcap\Delta.
\]

\smallskip \noindent The set $\Delta$ is nonempty because the submodule relation belongs to it. It is straightforward to verify all the AEC axioms for $(\rmod,\preccurlyeq^{*})$ except the L\"owenheim--Skolem--Tarski axiom. Let $X\subseteq B\in\rmod$. Applying the L\"owenheim--Skolem--Tarski axiom to $X$ in $(\rmod,\preccurlyeq_0)$ inside $B$, there is $A\preccurlyeq_0 B$ such that $X\subseteq A$ and $|A|\leq|X|+\lambda$. Since $\preccurlyeq_0\subseteq\preccurlyeq$ for every $\preccurlyeq\in\Delta$, we have $A\preccurlyeq^{*}B$. Hence $\preccurlyeq^{*}\in\mathscr{L}_{R,\lambda}$, and it is straightforward to show that it is the greatest lower bound of the family.

\smallskip \noindent For (2), define $\Delta$ inside $\mathscr{L}_{R}$ and use $\mathrm{LS}(\rmod,\preccurlyeq_0)$ in place of $\lambda$.
\end{proof}

\begin{remark}
In general, the relation $\preccurlyeq^{*}$ from Proposition~\ref{meet} differs from $\bigcup_{i<\theta}\preccurlyeq_i$. For example, let $R=\mathbb Z$. Define $A\preccurlyeq_1B$ by requiring that divisibility by $2$ of elements of $A$ be preserved between $A$ and $B$, and define $A\preccurlyeq_2B$ analogously using divisibility by $3$. $\preccurlyeq_1 \cup \preccurlyeq_2 \neq \preccurlyeq^*$ because $(\rmod, \preccurlyeq_1 \cup \preccurlyeq_2)$ fails to be an AEC since transitivity fails as witnessed by $
6\mathbb Z\leq 3\mathbb Z\leq\mathbb Z$.
\end{remark}

\begin{corollary}\label{lattice}
For every ring $R$ and every cardinal $\lambda\geq\operatorname{card}(R)+\aleph_0$, $\mathscr{L}_{R}$ and $\mathscr{L}_{R,\lambda}$ are lattices whose bottom element is the submodule relation.
\end{corollary}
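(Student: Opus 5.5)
The plan is to assemble Corollary~\ref{lattice} directly from Propositions~\ref{join} and~\ref{meet}. I first fix the ordering convention: $\mathscr{L}_{R,\lambda}$ and $\mathscr{L}_{R}$ are ordered by reverse inclusion, so $\preccurlyeq_1 \sqsubseteq \preccurlyeq_2$ means $\preccurlyeq_2\subseteq\preccurlyeq_1$. A lattice needs binary joins and binary meets. With this convention, the least upper bound of a family is the largest relation contained in all members. The greatest lower bound is the smallest relation in the class containing all members.

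\textbf{Joins.} Given $\preccurlyeq_0,\preccurlyeq_1\in\mathscr{L}_{R,\lambda}$, apply Proposition~\ref{join}(1) to the family indexed by $i<\lambda$ that lists $\preccurlyeq_0$ and then $\preccurlyeq_1$ for all remaining indices. This is allowed since $\lambda\geq\aleph_0$. The intersection is then $\preccurlyeq_0\cap\preccurlyeq_1$, and it lies in $\mathscr{L}_{R,\lambda}$. It is contained in both relations, and any relation contained in both is contained in their intersection. So it is the least upper bound in the reverse-inclusion order. For $\mathscr{L}_{R}$, use Proposition~\ref{join}(2) with $\theta=2$.

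\textbf{Meets.} Proposition~\ref{meet}(1), respectively (2), with $\theta=2$ directly supplies a greatest lower bound $\preccurlyeq^*$ in $\mathscr{L}_{R,\lambda}$, respectively $\mathscr{L}_{R}$. In the order, $\preccurlyeq^*$ lies below both given relations and above every common lower bound. This is exactly the meet.

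\textbf{Bottom element.} The submodule relation $\leq$ lies in $\mathscr{L}_{R,\lambda}$ for every $\lambda\geq\operatorname{card}(R)+\aleph_0$, by the following checks.
\begin{enumerate}[(1)]
\item The axioms of an abstract class, the Tarski--Vaught axioms and coherence are immediate for $\leq$.
\item L\"owenheim--Skolem--Tarski holds with number $\operatorname{card}(R)+\aleph_0$: take the submodule generated by $X$, which has size at most $|X|+\operatorname{card}(R)+\aleph_0$.
\item It contains $\oleq$.
\end{enumerate}
Every relation in the class is contained in $\leq$, because the AEC axioms force strong submodels to be submodels. Hence $\leq$ is the minimum in the reverse-inclusion order.

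\textbf{Main obstacle.} The only delicate point is bookkeeping rather than mathematics. The reversed order swaps the roles of Propositions~\ref{join} and~\ref{meet} relative to their names: intersections give joins. It must also be verified that the relations produced still contain $\oleq$. For intersections this holds because each member contains $\oleq$. For $\preccurlyeq^*$ it holds because $\preccurlyeq^*$ contains $\preccurlyeq_0$, which contains $\oleq$.
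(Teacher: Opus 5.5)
Your proposal is correct and follows the paper's proof: joins are the intersections supplied by Proposition~\ref{join}, meets are the greatest lower bounds supplied by Proposition~\ref{meet}, and the bottom element is the submodule relation. Your extra bookkeeping makes explicit what the paper's one-line proof leaves implicit. That bookkeeping is the reverse-inclusion convention, padding the pair to a $\lambda$-indexed family, checking that $\leq$ lies in $\mathscr{L}_{R,\lambda}$ and contains every member, and checking that the new relations still contain $\oleq$.
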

\begin{proof}
Proposition~\ref{join} provides joins (least upper bound), and Proposition~\ref{meet} provides meets (greatest lower bound).
\end{proof}

The following result relies heavily on \cite[Fact 3.1.1]{babo}.

\begin{lemma}\label{bound-size}
For every ring $R$ and every cardinal $\lambda\geq\operatorname{card}(R)+\aleph_0$, $
|\mathscr{L}_{R,\lambda}|\leq 2^{2^\lambda}$.
\end{lemma}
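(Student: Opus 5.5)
The plan is to show that each $\preccurlyeq\in\mathscr{L}_{R,\lambda}$ is determined by its restriction to pairs of modules of cardinality at most $\lambda$. Then we count the possible restrictions. The key input is \cite[Fact 3.1.1]{babo}: an AEC $\mathcal K$ is determined by $\mathcal K_{\leq \mathrm{LS}(\mathcal K)}$. Concretely, if $A\leq B$ are arbitrary $R$-modules, then $A\preccurlyeq B$ holds if and only if there is a directed system of small models whose union is $B$ and which witnesses $A\preccurlyeq B$. I would prove the version needed here directly from the axioms, to keep the argument self-contained.

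\textbf{Step 1.} Claim: for $A\leq B$ in $\rmod$, $A\preccurlyeq B$ if and only if for every $X\subseteq B$ with $|X|\leq\lambda$ there are $A_0\subseteq A$ and $B_0\subseteq B$ with $X\subseteq B_0$, $X\cap A\subseteq A_0$, $|B_0|\leq\lambda$, $A_0\preccurlyeq B_0$ and $A_0= B_0\cap A$. Here $A_0\preccurlyeq A$ and $B_0\preccurlyeq B$ should be part of the witness, but I want to phrase the condition using only small data.

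A cleaner route is the standard one. Let $\mathcal{K}^\lambda$ be the class of pairs $(M,N)$ with $M\preccurlyeq N$ and $|N|\leq\lambda$. One shows $A\preccurlyeq B$ if and only if $B$ is the directed union of a system $(B_s)_{s\in I}$ of small models with $B_s\preccurlyeq B_t$ for $s\leq t$, and $A$ is the union of $A_s=A\cap B_s$ with $A_s\preccurlyeq B_s$ and $A_s\preccurlyeq A_t$.

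For the forward direction, use L\"owenheim--Skolem--Tarski to close finite-sized seeds $Y\subseteq B$ to models $B_Y\preccurlyeq B$ with $A\cap B_Y\preccurlyeq A$; this needs an $\omega$-step interleaving as in the proof of Proposition~\ref{join}, together with coherence. For the backward direction, use the directed-colimit version of Tarski--Vaught and smoothness, which holds in any AEC by Shelah's presentation theorem or Fact 3.1.1 of \cite{babo}. I would simply cite \cite[Fact 3.1.1]{babo} for the statement: if two AECs on the same class have LS number $\leq\lambda$ and agree on models of size $\leq\lambda$, then they are equal.

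\textbf{Step 2.} This gives an injection $\preccurlyeq\mapsto\ \preccurlyeq\restriction \mathbf{K}_{\leq\lambda}$. Since $\preccurlyeq$ is closed under isomorphism, the restriction is determined by its values on pairs $(A,B)$ of $R$-modules whose universes are subsets of $\lambda$. The number of $R$-module structures on a subset of $\lambda$ is at most $2^\lambda$, because $\operatorname{card}(R)\leq\lambda$ and the structure is coded by $\lambda$ many functions $\lambda\times\lambda\to\lambda$. So there are at most $2^\lambda\cdot 2^\lambda=2^\lambda$ such pairs. A relation on them is a subset of a set of size $2^\lambda$, giving at most $2^{2^\lambda}$ relations, which is the desired bound.

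\textbf{Main obstacle.} The main obstacle is Step 1, the uniqueness of an AEC given its small part. It needs care because $\preccurlyeq$ on large models must be recovered through directed systems, not just chains. I would rely on \cite[Fact 3.1.1]{babo} for this rather than reprove it.
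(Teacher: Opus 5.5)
Your proof is correct, but it rests on a different key lemma than the paper's. The paper uses Shelah's Presentation Theorem, which is what \cite[Fact 3.1.1]{babo} actually states. To each $\preccurlyeq$ with $\mathrm{LS}=\mu$ it attaches a pair $(T_{\preccurlyeq},\Gamma_{\preccurlyeq})$, consisting of an $L_1$-theory and a set of $L_1$-types with $|L_1|=\mu$. It shows that $\preccurlyeq\mapsto(T_{\preccurlyeq},\Gamma_{\preccurlyeq})$ is injective by lifting $A\preccurlyeq B$ to $L_1$-expansions via Conditions (1)--(3). It then counts $2^{\mu}\cdot 2^{2^{\mu}}$ possibilities and takes the union over $\mu\leq\lambda$.

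You instead inject $\preccurlyeq$ into its restriction to pairs of $R$-modules whose universe is contained in $\lambda$. This uses the fact that an AEC with $\mathrm{LS}\leq\lambda$ is determined by its models of size at most $\lambda$. You then count subsets of a set of at most $2^\lambda$ pairs. Your route is more elementary and uniform: it needs no auxiliary language and no splitting by exact L\"owenheim--Skolem number.

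The directed-system argument you sketch does establish the uniqueness claim. It takes monotone closures $B_Y\preccurlyeq B$ for $Y\in[B]^{<\omega}$ with $A\cap B_Y\preccurlyeq A$, and uses coherence to get the relations among the small pieces. It then applies the directed-colimit Tarski--Vaught and smoothness axioms for the second relation. The paper packages this same content inside the presentation theorem.

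The one thing to fix is the attribution. \cite[Fact 3.1.1]{babo} is the presentation theorem, not the uniqueness statement itself. Either derive uniqueness from it as the paper's injectivity argument does, or rely on your own directed-system proof. The first ``Claim'' in your Step 1 is imprecise and can simply be dropped.
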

\begin{proof} It is enough to show that $\mathscr{L}_{R, \lambda}^{=}:= \{ \preccurlyeq \in \mathscr{L}_{R, \lambda} \mid  \mrm{LS}(\rmod, \preccurlyeq) = \lambda \}$ is such that $| \mathscr{L}_{R, \lambda}^{=} | \leq 2^{2^\lambda}$ as $\mathscr{L}_{R, \lambda} = \bigcup_{\mu \leq \lambda} \mathscr{L}_{R, \mu}^{=}$.

\smallskip \noindent Let $L_1$ be the expansion of $\tau_R$ by
$ \{ F_i^n(\bar{x}) \mid i < \lambda, n <\omega \text{ and } |\bar{x}| = n\}$.
For each $\preccurlyeq\in\mathscr{L}_{R,\lambda}^{=}$, the proof of Shelah's Presentation Theorem yields a complete first-order $L_1$-theory $T_{\preccurlyeq}$ and a set $\Gamma_{\preccurlyeq}$ of $L_1$-types satisfying Conditions~(1)--(3) of \cite[Fact 3.1.1]{babo}.

\smallskip \noindent Let $\Phi: \mathscr{L}_{R, \lambda}^{=} \to \{ T \mid T \text{ is a } L_1\text{-theory} \} \times \{ \Gamma \mid  \Gamma \text{ is a set of } L_1\text{-types}\}$ given by $\Phi(\preccurlyeq)= (T_{\preccurlyeq}, \Gamma_{\preccurlyeq})$.
There are at most $2^\lambda$ possible $L_1$-theories and at most $2^{2^\lambda}$ possible sets of $L_1$-types, so it suffices to prove that $\Phi$ is injective.

\smallskip \noindent Suppose
$\Phi(\preccurlyeq)=\Phi(\preccurlyeq^{*})$ and $A\preccurlyeq B$. By Condition~(1) of \cite[Fact 3.1.1]{babo}, choose $A'\in\operatorname{EC}(T_{\preccurlyeq},\Gamma_{\preccurlyeq})$ with $A'\restriction\tau_R=A$. By Condition~(3) of \cite[Fact 3.1.1]{babo}, extend $A'$ to $B'\in\operatorname{EC}(T_{\preccurlyeq},\Gamma_{\preccurlyeq})$ such that $A'\leq B'$ as $L_1$-structures and $B'\restriction\tau_R=B$. The equality of the two presentation pairs and another application of Condition~(3) of \cite[Fact 3.1.1]{babo}, now for $\preccurlyeq^{*}$, give $A\preccurlyeq^{*}B$. Thus $\preccurlyeq\subseteq\preccurlyeq^{*}$; the reverse inclusion can be shown analogously.
\end{proof}

\begin{proposition}
If $\preccurlyeq \, \in \, \mathscr{L}_{R}$, then $(\rmod, \preccurlyeq)$ has no maximal models and the joint embedding property.
\end{proposition}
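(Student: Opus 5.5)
The plan is to use the fact that every $\preccurlyeq\in\mathscr{L}_{R}$ contains the direct-summand relation. This holds because $\preccurlyeq$ lies between $\leq_\oplus$ and $\leq$ by Definition~\ref{main_def}, so that $A\leq_\oplus B$ implies $A\preccurlyeq B$. Both properties will then be witnessed by direct sums, since in each case the embeddings we need land onto direct summands.

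For joint embedding, given $A,C\in\rmod$, I would take $D=A\oplus C$ together with the canonical inclusions $\iota_A:A\to D$ and $\iota_C:C\to D$. The images $\iota_A(A)=A\oplus 0$ and $\iota_C(C)=0\oplus C$ are direct summands of $D$, hence $\preccurlyeq$-submodels of $D$. Closure under isomorphisms (Definition~\ref{def_AC}(1)) then makes $\iota_A$ and $\iota_C$ strong embeddings.

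For no maximal models, given $A\in\rmod$, I would take $B=A\oplus R$, identifying $A$ with $A\oplus 0$. Then $A\leq_\oplus B$, so $A\preccurlyeq B$. The extension is proper because $R\neq 0$: a ring with identity is nonzero here, or else $\rmod$ is trivial and the statement degenerates. The inclusion $A\oplus 0\subsetneq A\oplus R$ is strict, since $(0,1)\notin A\oplus 0$, and this holds even when $A\cong A\oplus R$ as abstract modules.

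I expect no real obstacle. The only points needing care are the remark that $R\neq 0$ (so that $R$ is a nonzero module) and the convention that strong embeddings are defined via images, which requires isomorphism-invariance. One could also use $A\oplus A$ for $A\neq 0$, but $A\oplus R$ handles $A=0$ uniformly.
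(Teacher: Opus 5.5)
Your proposal is correct and follows essentially the same route as the paper, whose entire proof is that both properties come from taking direct sums, since every direct-summand embedding is a $\preccurlyeq$-embedding. Your explicit choices $A\oplus C$ for joint embedding and $A\oplus R$ for no maximal models just fill in that argument. The paper leaves implicit the caveat $R\neq 0$, which you correctly flag.
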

\begin{proof}
This follows by taking direct sums, since every direct-summand embedding is a $\preccurlyeq$-embedding.
\end{proof}

\subsection{When is $\oleq\in\mathscr{L}_{R}$?}

	We now characterize the rings $R$ for which $\oleq\in\mathscr{L}_{R}$, equivalently, those for which $(\rmod,\oleq)$ is an AEC. The following proposition is immediate, so we omit its proof.

\begin{proposition}\label{1.2}
Let $B$ be an $R$-module, let $\delta$ be a limit ordinal, and let $(A_\alpha)_{\alpha<\delta}$ be a family of submodules of $B$. If $\sum_{\alpha<\beta}A_\alpha$ is direct for every $\beta<\delta$, then $\sum_{\alpha<\delta}A_\alpha$ is direct.
\end{proposition}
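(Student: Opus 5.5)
The statement is a finite-character property of directness. We use the standard criterion: a sum $\sum_{\alpha<\delta}A_\alpha$ is direct if and only if, for every finite set of indices $\alpha_1<\dots<\alpha_k<\delta$ and elements $a_j\in A_{\alpha_j}$ with $a_1+\dots+a_k=0$, we have $a_j=0$ for all $j$.

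The plan is as follows.

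\begin{enumerate}[(1)]
\item Fix such a finite relation $a_1+\dots+a_k=0$ with $a_j\in A_{\alpha_j}$ and $\alpha_1<\dots<\alpha_k<\delta$.
\item Since $\delta$ is a limit ordinal, $\beta:=\alpha_k+1<\delta$.
\item All the indices $\alpha_j$ lie below $\beta$. By hypothesis $\sum_{\alpha<\beta}A_\alpha$ is direct, so the finite relation forces $a_j=0$ for every $j$.
\end{enumerate}

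This verifies the criterion, and hence $\sum_{\alpha<\delta}A_\alpha$ is direct.

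The argument has no substantial obstacle. The two points that need care are the following. First, one must use the elementwise characterization of directness, which involves only finitely many summands, rather than the characterization $A_\alpha\cap\sum_{\gamma\neq\alpha}A_\gamma=0$. Second, one must use that $\delta$ is a limit ordinal, so that $\alpha_k+1$ remains below $\delta$.
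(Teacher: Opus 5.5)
Your argument is correct. Directness is a finite-character condition, and since $\delta$ is a limit ordinal, any finite relation among the summands already lives in $\sum_{\alpha<\alpha_k+1}A_\alpha$, which is direct by hypothesis. The paper omits the proof as immediate, and this is presumably the routine verification it has in mind.
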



\begin{lemma}\label{1.3}
Assume that $(\rmod,\leq_\oplus)$ is an AEC with $\mathrm{LS}(\rmod,\leq_\oplus)=\kappa$. Then every $R$-module $B$ admits a decomposition
$B=\bigoplus_{A\in\mathcal X}A$
for some family $\mathcal X$ of submodules of $B$, each of cardinality at most $\kappa$.
\end{lemma}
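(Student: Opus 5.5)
The idea is a Kaplansky-style transfinite construction of a continuous increasing chain of direct summands of $B$, built so that the successive quotients have size at most $\kappa$ and split off. Because the chain will be built from direct summands, Proposition~\ref{1.2} will identify the union with a direct sum.

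Fix $B$ and enumerate $B=\{b_\alpha\mid\alpha<\mu\}$ with $\mu=|B|$. By recursion on $\alpha\leq\mu$ I will build submodules $B_\alpha\leq_\oplus B$ satisfying the following:
\begin{enumerate}[(i)]
\item $B_0=0$, $b_\alpha\in B_{\alpha+1}$, and the chain is increasing and continuous;
\item $B_{\alpha+1}=B_\alpha\oplus A_\alpha$ for some submodule $A_\alpha$ with $|A_\alpha|\leq\kappa$.
\end{enumerate}

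At limit stages I set $B_\delta=\bigcup_{\alpha<\delta}B_\alpha$. Since $(\rmod,\leq_\oplus)$ is an AEC, the Tarski--Vaught axiom (4.3) applied to the $\leq_\oplus$-chain $(B_\alpha)_{\alpha<\delta}$, each of which satisfies $B_\alpha\leq_\oplus B$, gives $B_\delta\leq_\oplus B$. For this I need the chain to be a $\leq_\oplus$-chain, i.e. $B_\alpha\leq_\oplus B_\beta$ for $\alpha<\beta$. This follows from coherence, since $B_\alpha\leq_\oplus B$, $B_\beta\leq_\oplus B$ and $B_\alpha\leq B_\beta$.

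The successor step is the key point. Given $B_\alpha\leq_\oplus B$, write $B=B_\alpha\oplus C_\alpha$ and let $\pi$ be the projection onto $C_\alpha$. Apply the L\"owenheim--Skolem--Tarski axiom inside the module $C_\alpha$ to the set $\{\pi(b_\alpha)\}$. This gives $A_\alpha\leq_\oplus C_\alpha$ with $\pi(b_\alpha)\in A_\alpha$ and $|A_\alpha|\leq\kappa$. Write $C_\alpha=A_\alpha\oplus C_{\alpha+1}$; then $B=B_\alpha\oplus A_\alpha\oplus C_{\alpha+1}$. Set $B_{\alpha+1}=B_\alpha\oplus A_\alpha$, which is a direct summand of $B$ and contains $b_\alpha=(b_\alpha-\pi(b_\alpha))+\pi(b_\alpha)$. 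Here it matters that we work inside the complement $C_\alpha$ rather than in $B$. This ensures that $A_\alpha$ meets $B_\alpha$ trivially and splits off, and it avoids any need for a modular-law argument.

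Finally, I will check that $B_\beta=\bigoplus_{\alpha<\beta}A_\alpha$ by induction on $\beta$. The successor case is immediate from (ii). At limits, $B_\delta=\bigcup_{\beta<\delta} B_\beta=\sum_{\alpha<\delta}A_\alpha$, and each partial sum over $\beta<\delta$ is direct, so the whole sum is direct by Proposition~\ref{1.2}. Taking $\beta=\mu$ gives $B=B_\mu=\bigoplus_{\alpha<\mu}A_\alpha$, so $\mathcal X=\{A_\alpha\}$ works; the zero $A_\alpha$ may be discarded. The only step needing care is the limit stage, namely that the union of the chain is again a direct summand of $B$. This is exactly where the smoothness axiom for $\leq_\oplus$ is used, and it genuinely fails for general modules, which is why the hypothesis that $(\rmod,\leq_\oplus)$ is an AEC is needed.
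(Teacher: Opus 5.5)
Your proof is correct and follows essentially the same route as the paper: a transfinite recursion that applies the L\"owenheim--Skolem--Tarski axiom inside a complement at successor stages, and uses smoothness together with Proposition~\ref{1.2} at limit stages. Your use of coherence to check that the chain is a $\leq_\oplus$-chain before applying smoothness is a detail the paper leaves implicit.
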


\begin{proof}
Let $\mu=|B|$, and fix an enumeration $(b_\alpha)_{\alpha<\mu}$ of $B$.

\smallskip \noindent We recursively construct a sequence $(A_\alpha)_{\alpha<\mu}$ of submodules of $B$, each of cardinality at most $\kappa$, such that for every $\alpha<\mu$:
\begin{enumerate}[(1)]
\item the sum $\sum_{\gamma<\alpha+1}A_\gamma$ is direct;
\item $\bigoplus_{\gamma<\alpha+1}A_\gamma$ is a direct summand of $B$;
\item $b_\alpha\in\bigoplus_{\gamma<\alpha+1}A_\gamma$.
\end{enumerate}

\smallskip \noindent For $\alpha=0$, apply the L\"owenheim--Skolem--Tarski axiom to $b_0$ inside $B$ to obtain a direct summand $A_0$ of cardinality at most $\kappa$ containing $b_0$.

\smallskip \noindent Suppose $\alpha=\delta+1$. By induction hypothesis, we have that 
$B=C\oplus D$ with $C=\bigoplus_{\gamma<\delta+1}A_\gamma.$
Let $\pi:B\to D$ be the projection given by the decomposition. Applying the L\"owenheim--Skolem--Tarski axiom inside $D$ to $\pi(b_\alpha)$, choose a direct summand $A_\alpha$ of $D$ containing $\pi(b_\alpha)$ and satisfying $|A_\alpha|\leq\kappa$. Then $(A_\gamma)_{\gamma<\alpha+1}$ has the required properties.

\smallskip \noindent Suppose that $\alpha$ is a limit ordinal. By induction hypothesis, for any $\delta<\alpha$, the sum $\sum_{\gamma<\delta}A_\gamma$ is direct and $\bigoplus_{\gamma<\delta } A_{\gamma}$ is a direct summand of $B$. Now, by Proposition \ref{1.2} we have that the sum $\sum_{\gamma<\alpha}A_\gamma$ is direct, and moreover: 
\[
\bigoplus_{\gamma<\alpha}A_\gamma
=
\bigcup_{\delta<\alpha}\bigoplus_{\gamma<\delta}A_\gamma.
\]
Thus, by smoothness, this union is a direct summand of $B$. We may therefore proceed exactly as in the successor case to choose $A_\alpha$.

\smallskip \noindent Finally, Conditions~(1) and~(3), together with Proposition~\ref{1.2}, yield
$B=\bigoplus_{\alpha<\mu}A_\alpha$. Taking $\mathcal X=\{A_\alpha\mid\alpha<\mu\}$ completes the proof.
\end{proof}

\begin{fact}[{\cite[Theorem~4.5.7]{prest_book2}}]\label{1.4}
  Let $R$ be a ring. The following are equivalent:
  \begin{enumerate}[(1)]
  \item $R$ is left pure semisimple, i.e., $\leq_{\oplus}=\pleq$;
  \item There is a cardinal $\kappa$ such that every left $R$-module is a direct sum of modules of cardinality less than $\kappa$.
\end{enumerate}
\end{fact}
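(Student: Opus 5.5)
We prove the two implications separately; pure-injectivity links the two conditions in both directions. Recall that $A\leq_\oplus B$ always implies $A\pleq B$ (Corollary~\ref{ref_sums}). So (1) says exactly that every pure embedding splits.

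For (1)$\Rightarrow$(2), first show that every module is pure-injective. Every module $M$ embeds purely into its pure-injective hull $H(M)$. By (1) this embedding splits, so $M$ is a direct summand of a pure-injective module and is therefore pure-injective. Applying this to the direct sums $M^{(\omega)}$ shows that every module is $\Sigma$-pure-injective, i.e.\ totally transcendental in the model-theoretic sense.

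Next, invoke the classical decomposition theorem for $\Sigma$-pure-injective modules: each such module is a direct sum of indecomposable pure-injective modules. It remains to bound the size of indecomposable pure-injectives uniformly. Any nonzero element $m$ of an indecomposable pure-injective $N$ has a hull $H(m)$, which is a direct summand of $N$ and hence equals $N$. This hull is determined by the $\mrm{pp}$-type of $m$, and it can be realized inside a pure-injective (e.g.\ saturated) model of cardinality at most $2^{\operatorname{card}(R)+\aleph_0}$. Thus $|N|\leq 2^{\operatorname{card}(R)+\aleph_0}$, and (2) holds with $\kappa=(2^{\operatorname{card}(R)+\aleph_0})^+$.

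For (2)$\Rightarrow$(1), it suffices to show that every module is pure-injective. A pure submodule $A\pleq B$ with $A$ pure-injective is a direct summand, so this gives $\pleq\,=\,\leq_\oplus$. Fix $M$ and apply (2) to the products $M^I$ for arbitrarily large $I$: each $M^I$ decomposes into summands of size $<\kappa$. I would then invoke the Zimmermann-Huisgen/Garc\'\i a type theorem, which says that if $M^I$ is a direct sum of modules of bounded size for sufficiently large $I$, then $M$ is $\Sigma$-pure-injective. Concretely, this runs through the descending chain condition on $\mrm{pp}$-definable subgroups of $M$. A strictly descending chain $\varphi_0(M)\supsetneq\varphi_1(M)\supsetneq\cdots$ would let one build, in a suitable power $M^I$, an element whose "$\mrm{pp}$-support" cannot be captured inside any direct sum of small summands. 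Hence $M$ satisfies dcc on $\mrm{pp}$-definable subgroups, so it is $\Sigma$-pure-injective and in particular pure-injective.

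The main obstacle is this last step: extracting dcc on $\mrm{pp}$-definable subgroups from the decomposition hypothesis on products. That is the genuinely deep part of the cited theorem. The first direction, by contrast, reduces to standard facts about hulls and the decomposition of $\Sigma$-pure-injectives.
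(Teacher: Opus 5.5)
The paper gives no proof of this statement. It is quoted from Prest's book, and only the implication (2)$\Rightarrow$(1) is actually used, in the forward direction of Theorem~\ref{when_direct_summand}.

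\textbf{(1)$\Rightarrow$(2).} Your argument is correct: every module is pure-injective, hence $\Sigma$-pure-injective, hence a direct sum of indecomposable pure-injectives, each of size at most $2^{\operatorname{card}(R)+\aleph_0}$. It is, however, much heavier than needed, and the paper already contains a cheaper route. Under (1) we have $\oleq\,=\,\pleq$, so $(\rmod,\oleq)$ is an AEC with L\"owenheim--Skolem number $\operatorname{card}(R)+\aleph_0$ (Proposition~\ref{AEC-plea} and Remark~\ref{easy-rem}). The Kaplansky-style argument of Lemma~\ref{1.3} then splits every module into summands of cardinality at most $\operatorname{card}(R)+\aleph_0$. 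This avoids hulls, the decomposition theorem for $\Sigma$-pure-injectives and the bound on indecomposables, and it gives the better value $\kappa=(\operatorname{card}(R)+\aleph_0)^+$. What your route buys in exchange is that the summands can be taken indecomposable.

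\textbf{(2)$\Rightarrow$(1).} Your reduction to ``every module is $\Sigma$-pure-injective'' is right. After that, though, you only cite the theorem, and the sentence about an element whose $\mrm{pp}$-support escapes the small summands is not yet an argument. So, as a self-contained proof, this direction has a gap exactly where you locate it. The missing ingredient is the $\mrm{pp}$-version of Chase's lemma. Let $f:\prod_{n<\omega}A_n\to C=\bigoplus_{j\in J}C_j$ be a homomorphism and $\varphi_0\geq\varphi_1\geq\cdots$ a descending chain of $\mrm{pp}$-formulas. Then for some $n$ and some finite $F\subseteq J$,
\[
f\Bigl(\prod_{k\geq n}\varphi_n(A_k)\Bigr)\subseteq\bigoplus_{j\in F}C_j+\bigcap_{k<\omega}\varphi_k(C).
\]
Suppose this failed. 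Then you could choose $n_0<n_1<\cdots$ and $x_r\in\prod_{k\geq n_r}\varphi_{n_r}(A_k)$ such that $f(x_r)$ has a coordinate $j_r$ with two properties: $j_r$ lies outside the supports of $f(x_0),\dots,f(x_{r-1})$, and $f(x_r)_{j_r}\notin\varphi_{n_{r+1}}(C_{j_r})$. The sum $x=\sum_r x_r$ exists coordinatewise, and $\sum_{s>r}x_s\in\varphi_{n_{r+1}}(\prod_k A_k)$. Any $r$ with $j_r\notin\operatorname{supp}f(x)$ then forces $f(x_r)_{j_r}\in\varphi_{n_{r+1}}(C_{j_r})$, a contradiction.

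To finish, take a strictly descending chain $\varphi_0(M)\supsetneq\varphi_1(M)\supsetneq\cdots$, made descending as formulas by passing to finite conjunctions. Put $D=\bigcap_k\varphi_k(M)$ and choose $|K|\geq\kappa$, with $\kappa$ infinite. By (2), write $M^{\omega\times K}=\prod_{n<\omega}M^K=\bigoplus_j C_j$ with $|C_j|<\kappa$. Apply the lemma to the identity map and look at the $n$-th block. It shows that $(\varphi_n(M)/D)^K$ embeds into $\bigl(\bigoplus_{j\in F}C_j+D^{\omega\times K}\bigr)/D^{\omega\times K}$, which has size $<\kappa$. On the other hand, $\varphi_n(M)/D\neq 0$, so $|(\varphi_n(M)/D)^K|\geq 2^{|K|}>\kappa$. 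Hence $M$ has the descending chain condition on $\mrm{pp}$-definable subgroups, so it is $\Sigma$-pure-injective, and your argument finishes as planned.
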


		\begin{theorem}\label{when_direct_summand}
Let $R$ be a ring. $(\rmod,\oleq)$ is an AEC if and only if $R$ is left pure semisimple.
\end{theorem}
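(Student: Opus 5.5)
We prove the two directions separately. The forward direction is quick from the results already established. The backward direction reduces to verifying that $\oleq$ coincides with a relation already known to give an AEC.

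For the forward direction, assume $(\rmod,\oleq)$ is an AEC and let $\kappa=\mathrm{LS}(\rmod,\oleq)$. By Lemma~\ref{1.3}, every $R$-module $B$ decomposes as $B=\bigoplus_{A\in\mathcal X}A$ with each $|A|\leq\kappa$. In particular, every left $R$-module is a direct sum of modules of cardinality less than $\kappa^+$. Condition (2) of Fact~\ref{1.4} then holds with the cardinal $\kappa^+$, so $R$ is left pure semisimple.

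For the backward direction, assume $R$ is left pure semisimple, so that $\oleq=\pleq$ by Fact~\ref{1.4}. It then suffices to show that $(\rmod,\pleq)$ is an AEC. Here I would check the axioms directly.
\begin{enumerate}[(1)]
\item Closure under isomorphism is clear, $\pleq$ refines $\leq$, and $\pleq$ is a partial order, since transitivity of purity is immediate from the definition.
\item For the Tarski--Vaught axioms, a first-order $\mrm{pp}$-formula involves only finitely many parameters. So if $\bar a\in A_j$ and $\bigcup_i A_i\models\varphi(\bar a)$, the finitely many witnesses lie in some $A_k$ with $k\geq j$. Then $A_k\models\varphi(\bar a)$, and hence $A_j\models\varphi(\bar a)$. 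Smoothness follows in the same way.
\item Coherence holds because $\mrm{pp}$-formulas are preserved upward along homomorphisms by Proposition~\ref{preserve}. If $A\pleq C$, $B\pleq C$ and $A\leq B$, then $B\models\varphi(\bar a)$ implies $C\models\varphi(\bar a)$, and therefore $A\models\varphi(\bar a)$.
\item For the L\"owenheim--Skolem--Tarski axiom with $\lambda=\operatorname{card}(R)+\aleph_0$, I would run a standard closure argument. Given $X\subseteq B$, close $X$ under the module operations and add witnesses in $B$ for every $\mrm{pp}$-formula over the current set that holds in $B$. Iterating this $\omega$ times gives $A\pleq B$ with $|A|\leq|X|+\lambda$, since there are only $\operatorname{card}(R)+\aleph_0$ many $\mrm{pp}$-formulas.
\end{enumerate}

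Finally, $\oleq$ refines itself, so it lies in $\mathscr{L}_R$. I do not expect a genuine obstacle in this proof. The only point needing care is that Fact~\ref{1.4} concerns cardinality \emph{less than} $\kappa$, which is why the forward direction passes to the cardinal $\kappa^+$.
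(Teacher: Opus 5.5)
Your proof is correct and follows the paper's argument. The forward direction goes through Lemma~\ref{1.3} and Fact~\ref{1.4}, and your passage from $\kappa$ to $\kappa^+$ is exactly the bookkeeping the paper leaves implicit. The backward direction uses $\oleq=\pleq$ together with the fact that $(\rmod,\pleq)$ is an AEC; the only difference is that the paper simply cites Proposition~\ref{AEC-plea} for this, while you verify the axioms directly.
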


\begin{proof}
  The forward implication follows from Lemma~\ref{1.3} and Fact~\ref{1.4}. Conversely, if $R$ is left pure semisimple, then $\oleq=\pleq$, and $(\rmod,\pleq)$ is an AEC (see for example Proposition~\ref{AEC-plea}).
\end{proof}

If $\oleq\in\mathscr{L}_{R}$, then $\mathscr{L}_{R}$ has a top element. This leaves open the following natural question.

 \begin{question}\label{top-question}
 Does the lattice $\mathscr{L}_{R}$ have a top element for every ring $R$? If the answer is negative, characterize the rings for which $\mathscr{L}_{R}$ has a top element.
 \end{question}

\section{The sublattices $\mathscr{L}^{1}_{R}$ and $\mathscr{L}^{2}_{R}$}

We study the lattice  $\mathscr{L}_{R}$ by dividing it into two natural sublattices: the one below purity and the one above purity.

\subsection{The sublattice $\mathscr{L}^1_{R}$}

We first study the sublattice given by the strong submodel relations  on $\rmod$ which are weaker than the pure submodule relation.

\begin{definition}
Given a ring $R$, let $\mathscr{L}^{1}_{R}$ be the sublattice of $\mathscr{L}_{R}$ below the pure submodule relation.
\end{definition}

The structure of $\mathscr{L}^{1}_{R}$ again depends strongly on $R$. For instance, if $R$ is von Neumann regular, then purity coincides with the submodule relation \cite[2.3.22]{prest_book2}, so $\mathscr{L}^{1}_{R}$ has a single element. In this section, we show that, in general, $\mathscr{L}^{1}_{R}$ contains many well-behaved relations. In  Section~\ref{5.1}, we further investigate $\mathscr{L}^{1}_{\mathbb{Z}}$.

\begin{remark}\label{easy-rem}
If $\preccurlyeq\in\mathscr{L}_{R}$ lies below $\pleq$, then
$\mathrm{LS}(\rmod,\preccurlyeq)=\operatorname{card}(R)+\aleph_0$, because
$\mathrm{LS}(\rmod,\pleq)=\operatorname{card}(R)+\aleph_0$. Thus, when studying the sublattice below $\pleq$, restricting from $\mathscr{L}_{R}$ to $\mathscr{L}_{R, \lambda}$ for any cardinal $\lambda$ does not change the class of relations.
\end{remark}

\begin{proposition}\label{basic_L1}
 $\mathscr{L}^{1}_{R}$ is a lattice with bottom element given by $\leq$ and top element given by $\pleq$. Moreover, $| \mathscr{L}^{1}_{R}| \leq 2^{2^{\operatorname{card}(R) + \aleph_0}}$. 
\end{proposition}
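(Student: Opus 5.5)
The plan has three parts. First, $\pleq$ belongs to $\mathscr{L}_R$. Second, $\mathscr{L}^1_R$ is closed under the lattice operations of $\mathscr{L}_R$. Third, the cardinality bound follows from Lemma~\ref{bound-size} applied with $\lambda=\operatorname{card}(R)+\aleph_0$.

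For the first part I check the AEC axioms for $(\rmod,\pleq)$ directly.
\begin{enumerate}[(1)]
\item Isomorphism invariance and the fact that $\pleq$ implies $\leq$ are immediate.
\item Transitivity: pp-formulas are preserved upward by Proposition~\ref{preserve}. If $A\pleq B\pleq C$ and $C\models\varphi(\bar a)$ with $\bar a\in A$, then $B\models\varphi(\bar a)$, hence $A\models\varphi(\bar a)$.
\item Union of chains: each pp-formula has finitely many existential witnesses, so a witness in $\bigcup_i A_i$ already lies in some $A_i$. This gives both $A_j\pleq\bigcup_i A_i$ and smoothness.
\item Coherence is immediate: if $A\pleq C$ and $A\leq B\leq C$, then any $\varphi(\bar a)$ true in $B$ is true in $C$, hence in $A$.
\item L\"owenheim--Skolem--Tarski with number $\operatorname{card}(R)+\aleph_0$: close $X$ under witnesses for the at most $\operatorname{card}(R)+\aleph_0$ pp-formulas over finite tuples, iterating $\omega$ times as in the downward L\"owenheim--Skolem argument. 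The resulting submodule has size at most $|X|+\operatorname{card}(R)+\aleph_0$.
\item $\pleq$ refines $\oleq$ by Corollary~\ref{ref_sums}.
\end{enumerate}
So $\pleq\in\mathscr{L}_{R,\operatorname{card}(R)+\aleph_0}$. By Corollary~\ref{lattice}, $\leq$ is the bottom of $\mathscr{L}_R$ and lies below $\pleq$.

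For the lattice structure, take $\preccurlyeq_1,\preccurlyeq_2\in\mathscr{L}^1_R$, so both contain $\pleq$.
\begin{enumerate}[(1)]
\item Join: the intersection $\preccurlyeq_1\cap\preccurlyeq_2$ is in $\mathscr{L}_R$ by Proposition~\ref{join}, and it still contains $\pleq$.
\item Meet: by Proposition~\ref{meet} the meet is the intersection of all $\preccurlyeq\in\mathscr{L}_R$ containing both relations. Each such relation contains $\pleq$, so the meet does too.
\end{enumerate}
Hence $\mathscr{L}^1_R$ is a sublattice. Its top element is $\pleq$, since it is the strongest relation in the class, and its bottom element is $\leq$.

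For the bound, Remark~\ref{easy-rem} gives $\mathrm{LS}(\rmod,\preccurlyeq)=\operatorname{card}(R)+\aleph_0$ for every $\preccurlyeq\in\mathscr{L}^1_R$. The short reason: take $A\pleq B$ containing $X$ of small size; then $A\preccurlyeq B$. Therefore $\mathscr{L}^1_R\subseteq\mathscr{L}_{R,\operatorname{card}(R)+\aleph_0}$, and Lemma~\ref{bound-size} yields $|\mathscr{L}^1_R|\leq 2^{2^{\operatorname{card}(R)+\aleph_0}}$.

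The only step with real content is the L\"owenheim--Skolem--Tarski axiom for $\pleq$. It is standard, and I expect no serious obstacle there.
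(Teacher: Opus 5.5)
Your proposal is correct and follows the paper's proof. The paper gets the lattice structure from Corollary~\ref{lattice} and the bound from Remark~\ref{easy-rem} together with Lemma~\ref{bound-size}. You additionally make explicit two steps the paper leaves implicit: that $\pleq\in\mathscr{L}_R$ (the paper's Proposition~\ref{AEC-plea}), and that the relations containing $\pleq$ are closed under the joins and meets of Propositions~\ref{join} and~\ref{meet}.
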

\begin{proof}
This follows from Corollary \ref{lattice}. The \emph{moreover part} follows from Remark \ref{easy-rem} and Lemma \ref{bound-size}.
\end{proof}

\begin{proposition}\label{AEC-plea}
If $\Sigma$ is a set of first-order $\mrm{pp}$-formulas, then $\leq_\Sigma\in\mathscr{L}^{1}_{R}$.
\end{proposition}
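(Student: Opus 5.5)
We must show that $(\rmod,\leq_\Sigma)$ is an AEC with $\mathrm{LS}(\rmod,\leq_\Sigma)\leq\operatorname{card}(R)+\aleph_0$, and that $\leq_\Sigma$ lies between $\leq$ and $\pleq$. The sandwich is immediate. By Corollary~\ref{ref_sums}, every direct summand embedding is an $\leq_\Sigma$-embedding. Since $\Sigma$ consists of first-order $\mrm{pp}$-formulas, every pure submodule $A\pleq B$ satisfies $A\leq_\Sigma B$.

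Next we check the abstract class axioms and coherence. Closure under isomorphisms holds because $\leq_\Sigma$ is defined by satisfaction of formulas. Reflexivity and antisymmetry are clear. Transitivity holds because agreement on each $\varphi\in\Sigma$ between $A$ and $B$, and between $B$ and $C$, yields agreement between $A$ and $C$ on tuples from $A$.

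For coherence, suppose $A\leq_\Sigma C$, $B\leq_\Sigma C$ and $A\leq B$, and let $\bar a\in A$ and $\varphi\in\Sigma$. If $A\models\varphi(\bar a)$, then $B\models\varphi(\bar a)$ by Proposition~\ref{preserve}(1) applied to the inclusion. Conversely, if $B\models\varphi(\bar a)$, then $C\models\varphi(\bar a)$, and hence $A\models\varphi(\bar a)$. The same upward-preservation observation shows that in general only the direction ``$B\models\varphi(\bar a)\Rightarrow A\models\varphi(\bar a)$'' needs checking anywhere.

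For the Tarski--Vaught axioms, let $(A_i)_{i<\delta}$ be a continuous $\leq_\Sigma$-chain with union $A$; clearly $A\in\rmod$. For (4.2), suppose $A\models\varphi(\bar a)$ with $\bar a\in A_j$. Since $\varphi$ is a first-order $\mrm{pp}$-formula, it has the form $\exists\bar y\,\theta(\bar x,\bar y)$ with $\theta$ a finite system of equations. The finitely many witnesses lie in some $A_k$ with $k\geq j$, so $A_k\models\varphi(\bar a)$, and therefore $A_j\models\varphi(\bar a)$. This finiteness is the only point where first-order (rather than infinitary) $\mrm{pp}$-formulas are essential.

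For smoothness (4.3), suppose $A_i\leq_\Sigma B$ for all $i$ and $B\models\varphi(\bar a)$ with $\bar a\in A$. Then $\bar a$ lies in some $A_i$, so $A_i\models\varphi(\bar a)$, and upward preservation gives $A\models\varphi(\bar a)$.

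For the L\"owenheim--Skolem--Tarski axiom, we take the bound $\operatorname{card}(R)+\aleph_0$. The simplest route is to take an elementary substructure, or indeed a pure submodule, of size $|X|+\operatorname{card}(R)+\aleph_0$ containing $X$. Such a substructure exists by the downward L\"owenheim--Skolem theorem in the language $\tau_R$, whose cardinality is $\operatorname{card}(R)+\aleph_0$. It is pure, hence an $\leq_\Sigma$-submodel by the sandwich above. None of the steps appears to be a real obstacle; the only care needed is the finite-witness argument in (4.2).
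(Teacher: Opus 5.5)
Your proposal is correct and follows the paper's route: the paper's one-line proof appeals to the syntactic form of first-order $\mrm{pp}$-formulas (finitely many witnesses, which gives the Tarski--Vaught axioms) and to the first-order downward L\"owenheim--Skolem theorem (an elementary, hence pure, hence $\leq_\Sigma$-submodel of size $|X|+\operatorname{card}(R)+\aleph_0$), and you have written out that verification in detail. The only cosmetic point is that, under the paper's definition, a first-order $\mrm{pp}$-formula is \emph{equivalent} to a formula of the form $\exists\bar y\,\theta(\bar x,\bar y)$ rather than literally of that form; this is harmless, since every structure involved is an $R$-module.
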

\begin{proof}
This follows directly from the syntactic form of $\mrm{pp}$-formulas and the first-order downward L\"owenheim--Skolem theorem.
\end{proof}

\begin{example}\label{ex-1}
The following binary relations are in $\mathscr{L}^{1}_{R}$:
\begin{enumerate}[(1)]
\item $A$ is a submodule of $B$.
\item $A$ is a pure submodule of $B$.
\item  $A$ is an $\text{RD}$-submodule of $B$, i.e,   $A \leq B$ and for every $r\in R$, $rB \cap A = rA$. We denote it by $A \leq_\mrm{RD} B$.

\end{enumerate}
	
\end{example}

	\begin{definition}\label{def_pleasant}
A set $\Sigma$ of first-order $\mrm{pp}$-formulas is \emph{pleasant} if:
\begin{enumerate}[(1)]
\item every $\varphi(\bar x)\in\Sigma$ has the form
\[
\exists\bar y\,(H\bar y=K\bar x),
\]
with $H\in\operatorname{Mat}_{m\times n}(R)$, $\bar y=(y_1,\ldots,y_n)$, $K\in\operatorname{Mat}_{m\times s}(R)$, and $\bar x=(x_1,\ldots,x_s)$;
\item if $\exists\bar y\,(H\bar y=K\bar x)\in\Sigma$, then
$\exists\bar y\,(H\bar y=\bar z)\in\Sigma$ for $\bar z=(z_1,\ldots,z_m)$.
\end{enumerate}
A binary relation $\preccurlyeq$ on $\rmod$ is \emph{pleasant with respect to $\Sigma$} if $\Sigma$ is pleasant and $\preccurlyeq=\leq_\Sigma$. We say  $\preccurlyeq$ is \emph{pleasant} if it is pleasant with respect to some such $\Sigma$.
\end{definition}

	\begin{example}\label{points-(I)}\
\begin{enumerate}[(1)]
\item The submodule relation is pleasant with respect to $\Sigma=\emptyset$.
\item The RD-submodule relation is pleasant with respect to
\[
\Sigma=\{\exists y\,(ry=x)\mid r\in R\}.
\]
\item The pure submodule relation is pleasant with respect to
\[
\Sigma=\{\exists\bar y\,(H\bar y=K\bar x)\mid
H\in\operatorname{Mat}_{m\times n}(R),\ K\in\operatorname{Mat}_{m\times s}(R),\ m,n,s\in\mathbb N\}.
\]
\end{enumerate}
\end{example}

	\begin{remark}
		Every pleasant relation belongs to $\mathscr{L}^{1}_{R}$ by Proposition~\ref{AEC-plea}.
	\end{remark}

	\begin{lemma}\label{AP-(I)}
If $\preccurlyeq$ is pleasant, then $(\rmod,\preccurlyeq)$ has the amalgamation property.
\end{lemma}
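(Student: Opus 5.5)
The plan is to amalgamate using the pushout of Remark~\ref{pushout}. Let $\Sigma$ be pleasant with $\preccurlyeq=\leq_\Sigma$, and take a span $A\leq_\Sigma B$, $A\leq_\Sigma C$. Form $D=B\oplus_A C$ with the monomorphisms $f:B\to D$ and $g:C\to D$. Since $f\restriction A=g\restriction A$, the square commutes. It then suffices to show $f(B)\leq_\Sigma D$; the argument for $g(C)\leq_\Sigma D$ is symmetric, and closure under isomorphism lets us identify $B$ with $f(B)$.

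Fix $\varphi(\bar x)=\exists\bar y\,(H\bar y=K\bar x)\in\Sigma$ and $\bar b\in B^s$. The direction $B\models\varphi(\bar b)\Rightarrow D\models\varphi(f(\bar b))$ follows from Proposition~\ref{preserve}(1) applied to $f$. For the converse, suppose $D\models\varphi(f(\bar b))$. Write the witnesses as $[(b'_j,c'_j)]$ with $b'_j\in B$ and $c'_j\in C$. The equation $H\bar y=K\bar x$ then says that the pair
\[
\bigl(H\bar b'-K\bar b,\ H\bar c'\bigr)
\]
lies in $\widehat A^{\,m}$. Hence there is $\bar a\in A^m$ with $H\bar b'-K\bar b=\bar a$ and $H\bar c'=-\bar a$.

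The key step, and the only place where pleasantness is used, is the following. The second equation gives $C\models\exists\bar y\,(H\bar y=\bar z)$ at $\bar z=-\bar a$. By Definition~\ref{def_pleasant}(2) this formula belongs to $\Sigma$. Since $A\leq_\Sigma C$ and $-\bar a\in A^m$, there is $\bar a''\in A^n$ with $H\bar a''=-\bar a$. Then
\[
H(\bar b'+\bar a'')=K\bar b+\bar a-\bar a=K\bar b
\]
holds in $B$, so $B\models\varphi(\bar b)$.

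This gives $f(B)\leq_\Sigma D$, and by symmetry $g(C)\leq_\Sigma D$, so the pushout is an amalgam. I expect the main obstacle to be the bookkeeping of the pushout witnesses: one must check carefully that membership in $\widehat A$ yields a single tuple $\bar a$ appearing with opposite signs in the two coordinates. Once that is in place, condition (2) of pleasantness is exactly what allows the correction term from $C$ to be pulled back into $A$.
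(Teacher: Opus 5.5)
Your proposal is correct and is essentially the paper's own proof. Both amalgamate via the pushout $B\oplus_A C$, extract from membership in $\widehat A$ a single tuple $\bar a\in A^m$ with $H\bar b'-K\bar b=\bar a$ and $H\bar c'=-\bar a$, and use condition (2) of pleasantness together with $A\leq_\Sigma C$ to solve $H\bar y=-\bar a$ inside $A$ before adding the two equations in $B$.
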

\begin{proof}
Let $A\preccurlyeq B,C$, and form the pushout $(B\oplus_A C, f:B\to B\oplus_A C, g:C\to B\oplus_A C)$ with maps as in Remark~\ref{pushout}.

\smallskip \noindent Choose a pleasant set $\Sigma$ such that $\preccurlyeq=\leq_\Sigma$. The maps $f$ and $g$ are monomorphisms, so it remains to show that the formulas in $\Sigma$ are reflected. We verify this for $f$; the proof for $g$ is analogous.

\smallskip \noindent Suppose
$B\oplus_A C\models\exists\bar y\,(H\bar y=Kf(\bar\ell)),$
where $\exists\bar y\,(H\bar y=K\bar x)\in\Sigma$ and $\bar\ell\in B^s$. Choose $[(\bar b,\bar c)]\in(B\oplus_A C)^n$ witnessing this formula. Then there is $\bar a\in A^m$ such that
\[
B\models H\bar b-K\bar\ell=\bar a,
\qquad
C\models H\bar c=-\bar a.
\]
As $\Sigma$ is pleasant, $\exists\bar y\,(H\bar y=\bar z)\in\Sigma$. Since $A\leq_\Sigma C$, there is $\bar e\in A^n$ such that $A\models H\bar e=-\bar a$. So $B \models  H\bar e=-\bar a$. Adding the last two equations in $B$ we have that $B \models H(\bar b+\bar e)=K(\bar\ell)$. Therefore,
$B\oplus_A C\models Hf(\bar b+\bar e)=Kf(\bar\ell)$ by Proposition \ref{preserve}.
Hence $f[B]\leq_\Sigma B\oplus_A C$.
\end{proof}
	
	\begin{definition}
Let $\Sigma$ be a set of $\mrm{pp}$-formulas, let $b\in B$, and let $U\subseteq B$. Define $\operatorname{pp}_{\Sigma}(b/U;B)$ to be
\[
\begin{aligned}
&\{\varphi(z;\bar u)\mid \varphi(z,\bar x)\in\Sigma,\ \bar u\in U^{|\bar x|},\ B\models\varphi(b,\bar u)\}\\
&\quad\cup\{sz+ru=0\mid s,r\in R,\ u\in U,\ B\models sb+ru=0\}\\
&\quad\cup\bigl\{\exists\bar y\,(H\bar y=\bar u+\bar s^{T}z)\,\bigm|\,
\exists\bar y\,(H\bar y=K\bar x)\in\Sigma,\ 
\bar u\in U^{<\omega},\ \bar s\in R^{<\omega},\\
&\hspace{47mm} B\models\exists\bar y\,(H\bar y=\bar u+\bar s^{T}b)\bigr\}.
\end{aligned}
\]
If $\Sigma$ is the set of all $\mrm{pp}$-formulas, we write $\operatorname{pp}(b/U;B)$.
\end{definition}

\begin{remark} The formulas on the second line and third line of the previous definition are themselves first-order pp-formulas. 
	Therefore, when $\Sigma$ is the set of all pp-formulas, the last two lines do not add any additional information.
	\end{remark}
	
	
\begin{theorem}\label{char-gtps}
Assume that $\preccurlyeq$ is pleasant with respect to $\Sigma$, and let $A\leq B,C$. The following are equivalent:
\begin{enumerate}[(1)]
\item $\gtp_{(\rmod,\preccurlyeq)}(m/A;B)=\gtp_{(\rmod,\preccurlyeq)}(n/A;C)$;
\item $\operatorname{pp}_{\Sigma}(m/A;B)=\operatorname{pp}_{\Sigma}(n/A;C)$.
\end{enumerate}
\end{theorem}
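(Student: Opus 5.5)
The proof goes in two directions. For (1)$\Rightarrow$(2) I show that $\operatorname{pp}_{\Sigma}$ is invariant under $E^{\mrm{at}}$. For (2)$\Rightarrow$(2)'s converse, (2)$\Rightarrow$(1), I build an explicit amalgam: a quotient of the pushout of Remark~\ref{pushout} that additionally identifies $m$ with $n$. Throughout, $A\leq B,C$ is read as $A\preccurlyeq B,C$, as it must be for the types to lie in $\gS(A)$.

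\smallskip\noindent\emph{(1)$\Rightarrow$(2).} Since $E_{\mathcal K}$ is the transitive closure of $E^{\mrm{at}}_{\mathcal K}$, it suffices to prove one claim. Let $h:B\to D$ be a $\preccurlyeq$-embedding with $h\restriction A=\mrm{id}_A$; I claim $\operatorname{pp}_{\Sigma}(m/A;B)=\operatorname{pp}_{\Sigma}(h(m)/A;D)$.
\begin{enumerate}[(i)]
\item Every formula in the definition is pp, so each one true of $m$ in $B$ is true of $h(m)$ in $D$ by Proposition~\ref{preserve}.
\item For reflection of the first line, the relevant tuple $(m,\bar u)$ lies in $B$, and $h[B]\leq_\Sigma D$.
\item For the second line, reflection holds because $h$ is injective.
\item For the third line, the formula $\exists\bar y\,(H\bar y=\bar u+\bar s^{T}z)$ evaluated at $m$ is the formula $\exists\bar y\,(H\bar y=\bar z)$ evaluated at the tuple $\bar u+\bar s^{T}m$ from $B$. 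That formula lies in $\Sigma$ by clause (2) of pleasantness, so it is again reflected by $h[B]\leq_\Sigma D$.
\end{enumerate}

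\smallskip\noindent\emph{(2)$\Rightarrow$(1).} Let $D=(B\oplus C)/N$, where $N$ is the submodule generated by $\{(a,-a)\mid a\in A\}\cup\{(m,-n)\}$. Let $f:b\mapsto[(b,0)]$ and $g:c\mapsto[(0,c)]$. Then $f\restriction A=g\restriction A$ and $f(m)=g(n)$, so it remains to show that $f$ and $g$ are $\preccurlyeq$-embeddings. By symmetry I only treat $f$.

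For injectivity, suppose $(b,0)\in N$. Then $b=a+rm$ and $0=-a-rn$ for some $a\in A$ and $r\in R$. The equation $rz+a=0$ holds of $n$ in $C$, so it is in $\operatorname{pp}_{\Sigma}(n/A;C)=\operatorname{pp}_{\Sigma}(m/A;B)$. Hence $rm+a=0$ in $B$, and therefore $b=0$.

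For reflection, I mimic Lemma~\ref{AP-(I)}. Suppose $D\models\exists\bar y\,(H\bar y=Kf(\bar\ell))$ with witnesses $[(\bar b,\bar c)]$. Then for some $\bar a\in A^{<\omega}$ and $\bar s\in R^{<\omega}$,
\[
H\bar b-K\bar\ell=\bar a+\bar s m \text{ in } B, \qquad H\bar c=-\bar a-\bar s n \text{ in } C.
\]
Thus $\exists\bar y\,(H\bar y=-\bar a+(-\bar s)^{T}z)$ belongs to the third line of $\operatorname{pp}_{\Sigma}(n/A;C)$, hence to $\operatorname{pp}_{\Sigma}(m/A;B)$. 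This gives $\bar e\in B$ with $H\bar e=-\bar a-\bar s m$, and adding yields $H(\bar b+\bar e)=K\bar\ell$ in $B$. So $f[B]\leq_\Sigma D$.

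This reflection step is exactly why the third line is built into $\operatorname{pp}_{\Sigma}$, and I expect it to be the main point requiring care. In particular, the extra generator $(m,-n)$ introduces the multiple $\bar s m$, and the bookkeeping of signs and transposes around it must come out right. The first line of $\operatorname{pp}_{\Sigma}$ is not needed in this direction.
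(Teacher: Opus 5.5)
Your proof is correct. The direction (2)$\Rightarrow$(1) is essentially the paper's argument. Your $(B\oplus C)/N$ is the paper's quotient of the pushout $B\oplus_A C$ by the cyclic submodule generated by $[(m,-n)]$, and your injectivity and reflection computations, including the signs, match the paper's.

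The difference is in (1)$\Rightarrow$(2). The paper first invokes the amalgamation property (Lemma~\ref{AP-(I)}) to witness equality of types in a single common extension. This gives a $\preccurlyeq$-embedding $f:B\to D$ with $C\preccurlyeq D$, $f\restriction A=\mathrm{id}_A$ and $f(m)=n$. The paper then compares the three lines of $\operatorname{pp}_\Sigma$ for $m$ and $n$ inside that one $D$.

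You instead prove that $\operatorname{pp}_\Sigma(-/A;-)$ is invariant along any single $\preccurlyeq$-embedding fixing $A$. This invariance then passes through each $E^{\mathrm{at}}_{\mathcal K}$-step, and hence through the transitive closure. The ingredients are the same as the paper's: Proposition~\ref{preserve}, $h[B]\leq_\Sigma D$, injectivity, and clause~(2) of pleasantness for the third line.

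What your route buys is that it never uses amalgamation, so that direction is more self-contained. The paper's route is shorter once Lemma~\ref{AP-(I)} is in hand.

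Finally, neither of your two directions actually uses your reading $A\preccurlyeq B,C$. Your argument goes through verbatim for $A\leq B,C$ as the theorem is stated.
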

\begin{proof}
Suppose that $\gtp_{(\rmod,\preccurlyeq)}(m/A;B)=\gtp_{(\rmod,\preccurlyeq)}(n/A;C)$. By Lemma~\ref{AP-(I)}, equality of  types may be witnessed in a common extension, so there are $D \in \rmod$ and $f: B \to D$ a $\preccurlyeq$-embedding such that $C \preccurlyeq D$, $f\upharpoonright A = \mathrm{id}_A$ and $f(m)=n$. Since $\preccurlyeq=\leq_\Sigma$, the formulas from the first line in the definition of $\operatorname{pp}_\Sigma$ agree for $m$ and $n$. Since $f$ is a monomorphism, the same is true of the equations in the second line.

\smallskip \noindent Suppose now that
$B\models\exists\bar y\,(H\bar y=\bar a+\bar s^T m)$
and that $\exists\bar y\,(H\bar y=K\bar x)\in\Sigma$. Proposition~\ref{preserve} gives
$D\models\exists\bar y\,(H\bar y=\bar a+\bar s^T n).$
Condition (2) of $\Sigma$ being pleasant and $C\leq_\Sigma D$, imply that the latter formula already holds in $C$. Thus every formula in $\operatorname{pp}_\Sigma(m/A;B)$ belongs to $\operatorname{pp}_\Sigma(n/A;C)$; the reverse inclusion is analogous.

\smallskip \noindent Conversely, suppose  that $\operatorname{pp}_{\Sigma}(m/A;B)=\operatorname{pp}_{\Sigma}(n/A;C)$. Let
$P=B\oplus_A C=(B\oplus C)/\widehat A$ with $\widehat A=\{(a,-a)\mid a\in A\}$. Let $D\leq P$ be the cyclic submodule generated by $[(m,-n)]$ in $P$. Let $Q=P/D$ and define
$f(b)=[(b,0)]+D$ and $g(c)=[(0,c)]+D.$
Then $f\restriction A=g\restriction A$ and $f(m)=g(n)$. We prove that $f$ is a $\preccurlyeq$-embedding; the proof for $g$ is analogous.

\smallskip \noindent First, we show that $f$ is a monormophism. If $f(b)=0$, then there are $s\in R$ and $a\in A$ such that
\[ b-sm=a \; \text{ and } \; sn=-a.\]
Thus $sz+a=0 \in \operatorname{pp}_\Sigma(n/A;C)$ and hence $ sz+a=0 \in  \operatorname{pp}_\Sigma(m/A;B)$. Therefore $sm+a=0$, and the first displayed equality gives $b=0$.

\smallskip \noindent It remains to show that $f[B]\leq_\Sigma Q$. Suppose
$Q\models\exists\bar y\,(H\bar y=Kf(\bar\ell)),$
where $\exists\bar y\,(H\bar y=K\bar x)\in\Sigma$ and $\bar\ell\in B^s$. Choose
$([(b_1,c_1)]+D,\ldots,[(b_n,c_n)]+D)\in Q^n$
witnessing the formula. Then there are $\bar a\in A^m$ and $\bar s\in R^m$ such that
\[
B\models H\bar b-K\bar\ell-\bar s^T m=\bar a,
\qquad
C\models H\bar c+\bar s^T n=-\bar a.
\]
Hence $\exists\bar y\,(H\bar y=-\bar a-\bar s^T z) \in \operatorname{pp}_\Sigma(n/A;C) = \operatorname{pp}_\Sigma(m/A;B)$. Choose $\bar e\in B^n$ such that
$B\models H\bar e=-\bar a-\bar s^T m.$
Adding the last two equations in $B$ we have that
$B\models H(\bar b+\bar e)=K\bar\ell.$
Therefore, $Q\models Hf(\bar b+\bar e)=Kf(\bar\ell)$ by Proposition \ref{preserve}. Hence $f[B]\leq_\Sigma Q$.\end{proof}

\begin{remark}
When $\preccurlyeq=\pleq$ and $\Sigma$ is the set from Example~\ref{points-(I)}(3), the characterization of Theorem \ref{char-gtps} already appears in \cite[Lemma~3.14]{kuma}. Nevertheless, the proof above is more direct as it avoids $\mrm{pp}$-quantifier elimination.
\end{remark}

\begin{remark}
The proof of Theorem \ref{char-gtps} extends the methods used in \cite[Theorem 2.16]{fgm1}.
\end{remark}

The preceding syntactic characterization of types yields tameness.
\begin{corollary}\label{tame-(I)}
	If $\preccurlyeq$ is pleasant, then $(\rmod, \preccurlyeq)$ is $(<\aleph_0)$-tame.
	\end{corollary}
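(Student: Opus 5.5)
We prove the corollary by contraposition, using the syntactic description of types in Theorem~\ref{char-gtps}. Fix a pleasant $\Sigma$ with $\preccurlyeq=\leq_\Sigma$, an $R$-module $A$, and distinct types $p=\gtp(m/A;B)$ and $q=\gtp(n/A;C)$ in $\gS_{(\rmod,\preccurlyeq)}(A)$, where $A\preccurlyeq B$ and $A\preccurlyeq C$. By Theorem~\ref{char-gtps} (which only needs $A\leq B,C$), $\operatorname{pp}_\Sigma(m/A;B)\neq\operatorname{pp}_\Sigma(n/A;C)$. Hence some formula lies in one set but not the other, say $\chi\in\operatorname{pp}_\Sigma(m/A;B)\setminus\operatorname{pp}_\Sigma(n/A;C)$. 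Every formula in the definition of $\operatorname{pp}_\Sigma$ uses only finitely many parameters from $A$: a tuple $\bar u$, a single $u$, or a finite tuple $\bar u\in A^{<\omega}$. Let $X\subseteq A$ be the finite set of parameters of $\chi$.

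We then argue that $p\restriction X\neq q\restriction X$. Suppose instead $(m,X,B)\,E_{\mathcal K}\,(n,X,C)$. The relation $E_{\mathcal K}$ is the transitive closure of $E^{\mrm{at}}_{\mathcal K}$, so it is enough to check that a single atomic step $(b_1,X,B_1)\,E^{\mrm{at}}\,(b_2,X,B_2)$ preserves $\operatorname{pp}_\Sigma(b_i/X;B_i)$. Here we have strong embeddings $f_\ell:B_\ell\to B_*$ fixing $X$ pointwise with $f_1(b_1)=f_2(b_2)$, and we run the forward direction of the proof of Theorem~\ref{char-gtps} with $X$ in place of $A$.
\begin{enumerate}[(i)]
\item Formulas of the first line: $f_\ell[B_\ell]\leq_\Sigma B_*$ and $\varphi\in\Sigma$, so these are both preserved and reflected.
\item Equations $sz+ru=0$: $f_\ell$ is a monomorphism, so these transfer as well.
\item Formulas $\exists\bar y\,(H\bar y=\bar u+\bar s^Tz)$: they are preserved by homomorphisms (Proposition~\ref{preserve}). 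They are reflected because clause (2) of pleasantness puts $\exists\bar y\,(H\bar y=\bar z)\in\Sigma$, and $f_\ell[B_\ell]\leq_\Sigma B_*$.
\end{enumerate}
Each atomic step therefore preserves $\operatorname{pp}_\Sigma(\cdot/X;\cdot)$, and so does $E_{\mathcal K}$. This forces $\chi\in\operatorname{pp}_\Sigma(n/A;C)$, since $\chi$ only mentions parameters in $X$, a contradiction.

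The main obstacle is that restricted types $p\restriction X$ have a finite set $X$ that need not be a model, so Theorem~\ref{char-gtps} cannot be applied to them directly. Step (iii) handles this: it only invariance of $\operatorname{pp}_\Sigma(\cdot/X;\cdot)$ under strong embeddings fixing $X$, and this works for any subset $X$, so the amalgamation-based half of Theorem~\ref{char-gtps} is not needed. We also check that in the definition of $\operatorname{pp}_\Sigma$ over a set $U$, the $\bar u\in U^{<\omega}$ of the third line is a genuine parameter tuple, so membership of $\chi$ depends only on $X$.
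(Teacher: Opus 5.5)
Your proof is correct and is essentially the argument the paper intends; the paper only says that the syntactic characterization in Theorem~\ref{char-gtps} yields tameness, and you supply the missing detail that distinct types over $A$ are separated by a single formula of $\operatorname{pp}_\Sigma$ with finitely many parameters $X$, together with the fact that $\operatorname{pp}_\Sigma(\cdot/X;\cdot)$ is an invariant of $p\restriction X$. Your atom-by-atom check of that invariance is sound and even avoids amalgamation, but the ``obstacle'' you describe is milder than you suggest: Theorem~\ref{char-gtps} only assumes $A\leq B,C$, so it applies directly to the finitely generated submodule $\langle X\rangle$, and restricting to $X$ or to $\langle X\rangle$ gives the same type because a homomorphism fixes $X$ pointwise exactly when it fixes $\langle X\rangle$ pointwise.
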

	
	Furthermore, we use Theorem~\ref{char-gtps} to obtain a direct proof of stability.

\begin{proposition}\label{small-pp-types}
If $A$ is a left $R$-module, then
\[
|S_{\mathrm{pp}}(A)|\leq |A^{<\omega}|^{\operatorname{card}(R)+\aleph_0},
\]
where $S_{\mathrm{pp}}(A)=\{\operatorname{pp}(m/A;B)\mid A\pleq B,\ m\in B\}.$
\end{proposition}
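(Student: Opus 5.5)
The plan is a direct counting argument: bound the number of candidate formulas over $A$, and then observe that a $\mrm{pp}$-type over $A$ is just a subset of those formulas. The first bound uses only the syntax of first-order $\mrm{pp}$-formulas, namely that there are at most $\operatorname{card}(R)+\aleph_0$ of them.

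First, we count the first-order $\mrm{pp}$-formulas in the variables $z,\bar x$ with $|\bar x|<\omega$. By definition, each such formula is equivalent to one in $\Lambda_\omega^{\aleph_0,\aleph_0}$, and by Remark \ref{equiv-remark} (or directly) it is equivalent to one of the form $\exists\bar y\,(H\bar y=K(z,\bar x))$ with $H,K$ finite matrices over $R$. Each formula in $\operatorname{pp}(m/A;B)$ is therefore, up to equivalence, determined by two pieces of data:
\begin{enumerate}[(1)]
\item a pair of finite matrices $(H,K)$, of which there are at most $\operatorname{card}(R)+\aleph_0$;
\item a parameter tuple $\bar u\in A^{<\omega}$.
\end{enumerate}
Hence the set $\Phi_A$ of all pp-formulas $\varphi(z;\bar u)$ with parameters from $A$ has cardinality at most $\mu:=(\operatorname{card}(R)+\aleph_0)\cdot|A^{<\omega}|$. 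The remark after the definition of $\operatorname{pp}_\Sigma$ lets us ignore the second and third lines of that definition, since they are themselves pp-formulas over $A$.

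The naive next step, bounding $|S_{\mathrm{pp}}(A)|$ by the number of subsets of $\Phi_A$, gives only $2^{\mu}$, which is too weak. We sharpen it as follows. Fix a pair $(H,K)$ and let $\varphi(z,\bar x)$ be the corresponding pp-formula. By Proposition \ref{preserve}(2), $\varphi(B)$ is a subgroup of $B^{1+|\bar x|}$. So for fixed $m$, the set
\[
X_\varphi=\{\bar u\in A^{|\bar x|}\mid B\models\varphi(m,\bar u)\}
\]
is either empty or a coset of the subgroup $\varphi(0,A)\cap A^{|\bar x|}=\{\bar u\in A^{|\bar x|}\mid B\models\varphi(0,\bar u)\}$. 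Because $A\pleq B$, this subgroup equals $\{\bar u\mid A\models\varphi(0,\bar u)\}$ and so depends only on $A$.

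It follows that $X_\varphi$ is determined by one of two choices: either the symbol ``empty'', or a single representative $\bar u_\varphi\in A^{|\bar x|}$. That gives at most $|A^{<\omega}|+1$ possibilities for each $\varphi$. Since $\operatorname{pp}(m/A;B)$ is determined by the family $(X_\varphi)_\varphi$, and there are at most $\operatorname{card}(R)+\aleph_0$ formulas $\varphi$, we obtain
\[
|S_{\mathrm{pp}}(A)|\leq(|A^{<\omega}|+1)^{\operatorname{card}(R)+\aleph_0}=|A^{<\omega}|^{\operatorname{card}(R)+\aleph_0}.
\]
The final equality holds trivially when $A$ is infinite. When $A$ is finite (for instance $A=0$, where $|A^{<\omega}|=\aleph_0$), $|A^{<\omega}|$ is infinite anyway, so it absorbs the $+1$.

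The main obstacle is the coset step. We must check carefully that purity makes the subgroup $\varphi(0,A)$ independent of $B$. We must also check that $\varphi(m,\bar u)\wedge\varphi(m,\bar u')$ implies $\varphi(0,\bar u-\bar u')$, which follows from the subgroup property in Proposition \ref{preserve}(2). With those two points settled, the rest is cardinal arithmetic.
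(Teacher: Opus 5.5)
Your proposal is correct and takes essentially the same approach as the paper. Both arguments inject $S_{\mathrm{pp}}(A)$ into $(A^{<\omega}\cup\{d\})^{\Gamma}$ by recording, for each pp-formula $\varphi(z,\bar x)$, one witnessing parameter tuple or a ``none'' symbol, and both get injectivity from the coset structure of pp-definable sets together with purity. The only difference is cosmetic: you take cosets in the parameter variables (of $\{\bar u\in A^{|\bar x|}\mid A\models\varphi(0,\bar u)\}$), whereas the paper takes them in the $z$-variable and transfers $\exists z\,(\varphi(z,\bar a)\wedge\varphi(z,\bar a_\varphi))$ from $B_1$ to $B_2$ via purity.
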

\begin{proof}
Fix a well-ordering of $A^{<\omega}$, let $d\notin A^{<\omega}$, and let $\Gamma$ be the set of all $\mrm{pp}$-formulas of the form $\varphi(z,\bar x)$.

\smallskip \noindent Let
$\Psi:S_{\mathrm{pp}}(A)\longrightarrow(A^{<\omega}\cup\{d\})^\Gamma$ be given by $\Psi(\mrm{pp}(m/A, B)): \Gamma \to A^{<\omega} \cup \{d\}$ where for $\varphi(z,\bar x)\in\Gamma$, let $\Psi(\mrm{pp}(m/A, B))(\varphi)$ be the least tuple $\bar a\in A^{|\bar x|}$ such that $B\models\varphi(m,\bar a)$, if such a tuple exists, and let $\Psi(\mrm{pp}(m/A, B))(\varphi)=d$ otherwise.

\smallskip \noindent We show that $\Psi$ is injective. Suppose
$\Psi(\operatorname{pp}(m_1/A;B_1)) = \Psi(\operatorname{pp}(m_2/A;B_2)).$
It suffices to prove one inclusion. Let $\varphi(z,\bar a)\in\operatorname{pp}(m_1/A;B_1)$, and write $\bar a_\varphi=\Psi(\operatorname{pp}(m_1/A;B_1))(\varphi)$. Then
$B_1\models\exists z\bigl(\varphi(z,\bar a)\wedge\varphi(z,\bar a_\varphi)\bigr).$
Since $A\pleq B_1,B_2$ and the previous formula is a first-order $\mrm{pp}$-formula, $B_2 \models\exists z\bigl(\varphi(z,\bar a)\wedge\varphi(z,\bar a_\varphi)\bigr).$ As solution sets of $\mrm{pp}$-formulas are cosets \cite[Corollary~2.2]{prest_book1}, hence
$\varphi(B_2,\bar a)=\varphi(B_2,\bar a_\varphi)$. Since $\bar{a}_\varphi= \Psi(\mrm{pp}(m_1/A, B_1))(\varphi)=\Psi(\mrm{pp}(m_2/A, B_2))(\varphi)$, $B_2\models\varphi(m_2,\bar a_\varphi)$, and therefore $B_2\models\varphi(m_2,\bar a)$.

\smallskip \noindent Finally, $|\Gamma|\leq\operatorname{card}(R)+\aleph_0$ as $\Gamma$ is a set of first-order formulas, which gives the stated bound.
\end{proof}

	\begin{lemma}\label{stable-(I)}
Assume that $\preccurlyeq$ is pleasant. If $\lambda^{\operatorname{card}(R)+\aleph_0}=\lambda$, then $(\rmod,\preccurlyeq)$ is $\lambda$-stable.
\end{lemma}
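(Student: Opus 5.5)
We reduce stability to counting syntactic types. Fix $A \in \rmod$ with $|A| = \lambda$, and let $\Sigma$ be a pleasant set with $\preccurlyeq = \leq_\Sigma$. By Theorem~\ref{char-gtps}, a type $\gtp_{(\rmod,\preccurlyeq)}(m/A;B)$ with $A \preccurlyeq B$ is determined by $\operatorname{pp}_\Sigma(m/A;B)$. It therefore suffices to bound the number of sets of the form $\operatorname{pp}_\Sigma(m/A;B)$, where $A \preccurlyeq B$ and $m \in B$, by $\lambda$.

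Proposition~\ref{small-pp-types} cannot be applied verbatim, because it assumes $A \pleq B$, whereas here we only know $A \leq_\Sigma B$. The plan is to rerun its injectivity argument directly for $\operatorname{pp}_\Sigma$. Each formula in $\operatorname{pp}_\Sigma(m/A;B)$ has the form $\theta(z,\bar u)$, where $\bar u \in A^{<\omega}$ and the parameter-free part $\theta(z,\bar x)$ ranges over a set $\Gamma$ of first-order pp-formulas. These are: the formulas of $\Sigma$, the equations $sz + rx = 0$, and the formulas $\exists \bar y\,(H\bar y = \bar x + \bar s^T z)$ obtained from $\Sigma$. In particular $|\Gamma| \leq \operatorname{card}(R) + \aleph_0$.

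Define $\Psi$ sending $\operatorname{pp}_\Sigma(m/A;B)$ to the function $\Gamma \to A^{<\omega} \cup \{d\}$ that picks the least witness tuple $\bar a$ with $B \models \theta(m,\bar a)$, or $d$ if none exists. This gives
\[
|\gS| \leq (\lambda + 1)^{\operatorname{card}(R)+\aleph_0} = \lambda^{\operatorname{card}(R)+\aleph_0} = \lambda .
\]

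The main obstacle is injectivity, because the coset argument needs a transfer from $B_1$ to $B_2$ of the formula $\exists z\,(\theta(z,\bar a) \wedge \theta(z,\bar a_\theta))$. By Prest's coset fact this formula is equivalent to $\theta'(\bar a - \bar a_\theta)$, where $\theta'(\bar x) = \exists z\,(\theta(z,\bar x) \wedge \theta(z,0))$ says that the difference lies in the subgroup defined by $\theta$. We handle the three shapes of $\theta$ using the pleasant closure condition.

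For the equations $sz + rx = 0$, the condition reduces to $r(a - a') = 0$, which is quantifier-free and holds in $A$.

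For the formulas $\exists \bar y\,(H\bar y = \bar x + \bar s^T z)$, the condition becomes $\exists \bar y\,(H\bar y = \bar a - \bar a')$. This formula lies in $\Sigma$ by Definition~\ref{def_pleasant}(2), so it holds in $A$ because $A \leq_\Sigma B_1$, and hence it holds in $B_2$.

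For $\theta \in \Sigma$ of the form $\exists \bar y\,(H\bar y = K(z,\bar x))$, write $K(z,\bar x) = k z + K'\bar x$. The condition becomes the existence of $z, \bar y, \bar y'$ with $H\bar y = kz + K'\bar a$ and $H\bar y' = kz + K'\bar a'$. Subtracting shows that the difference $\bar u = K'(\bar a - \bar a')$ satisfies $\exists \bar y\,(H\bar y = \bar u)$ in $B_1$. This is again a formula of $\Sigma$ with parameters in $A$, so it transfers to $A$ and then up to $B_2$. Conversely, from $H\bar y = K'(\bar a - \bar a')$ in $B_2$ together with a witness for $\theta(m_2,\bar a')$, we obtain $B_2 \models \theta(m_2,\bar a)$ by adding the two equations.

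Thus, in every case, membership of $\theta(z,\bar a)$ transfers from $\operatorname{pp}_\Sigma(m_1/A;B_1)$ to $\operatorname{pp}_\Sigma(m_2/A;B_2)$ whenever the $\Psi$-values agree, and symmetrically. So $\Psi$ is injective, and the count above gives $\lambda$-stability.
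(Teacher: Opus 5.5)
Your argument is correct, and it takes a different route from the paper's.

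\textbf{The paper's route.} The paper first uses amalgamation (Lemma~\ref{AP-(I)}) to realize the $\lambda^+$ supposedly distinct types in one module $B$. It then chooses $\widehat A\pleq B$ with $A\subseteq\widehat A$ and $|\widehat A|\leq\lambda$, and applies Proposition~\ref{small-pp-types} verbatim to the full $\mrm{pp}$-types over $\widehat A$. Only at the end does it restrict to $\operatorname{pp}_\Sigma$ over $A$ and invoke Theorem~\ref{char-gtps}.

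\textbf{Your route.} You count $\operatorname{pp}_\Sigma$-types over $A$ directly, across different ambient modules. The purity transfer in the coset argument is replaced by the observation that condition~(2) of Definition~\ref{def_pleasant} puts the ``difference'' formula $\exists\bar y\,(H\bar y=\bar w)$ into $\Sigma$. So $A\leq_\Sigma B_1$ alone lets you move it from $B_1$ down to $A$ and then up to $B_2$, and all three of your case checks are right. One small point: the equivalence you cite between $\exists z\,(\theta(z,\bar a)\wedge\theta(z,\bar a_\theta))$ and $\theta'(\bar a-\bar a_\theta)$ holds only where $\theta(\cdot,\bar a_\theta)$ is realized. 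That is the case in both $B_1$ and $B_2$, and what you actually use is $\theta(0,\bar a-\bar a_\theta)$, so nothing breaks.

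\textbf{What each buys.} The paper's proof is shorter because it recycles Proposition~\ref{small-pp-types} and reduces to the classical pure case. Yours is self-contained and needs neither amalgamation nor the auxiliary pure submodule $\widehat A$. It uses only the direction (2)$\Rightarrow$(1) of Theorem~\ref{char-gtps}, whose proof does not use amalgamation. It also makes explicit exactly where the pleasantness closure condition enters, bounding the number of $\operatorname{pp}_\Sigma$-types over $A$ by $|A|^{\operatorname{card}(R)+\aleph_0}$ without passing to a larger parameter set.
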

\begin{proof}
Let $A\in (\rmod)_\lambda$ and $\preccurlyeq$ be pleasant with respect to $\Sigma$. Suppose, toward a contradiction, that
$\{p_i=\gtp_{(\rmod,\preccurlyeq)}(m_i/A;B)\mid i<\lambda^+\}$
is a family of distinct types in $\gS_{(\rmod,\preccurlyeq)}(A)$. By amalgamation, we may assume that all the types are realized in the same module $B$.

\smallskip \noindent Choose $\widehat A\pleq B$ such that $A\subseteq\widehat A$ and $|\widehat A|\leq\lambda$. Proposition~\ref{small-pp-types} and the hypothesis on $\lambda$ give that
\[
|S_{\mathrm{pp}}(\widehat A)|
\leq |\widehat A^{<\omega}|^{\operatorname{card}(R)+\aleph_0}
\leq\lambda.
\]
Hence there are distinct $i,j<\lambda^+$ with
$\operatorname{pp}(m_i/\widehat A;B)=\operatorname{pp}(m_j/\widehat A;B).$  Hence $\mrm{pp}_\Sigma(m_i/A, B) = \mrm{pp}_\Sigma(m_j/A, B)$ as the formulas that appear in these types are pp-formulas. Therefore, we have that
$$\gtp_{(\rmod, \preccurlyeq)}(m_i/A, B) = \gtp_{(\rmod,\preccurlyeq)}(m_j/A, B)$$ by Theorem \ref{char-gtps}. This is a contradiction to $p_i \neq p_j$.
\end{proof}
	
	\begin{remark}\label{gen-sta-1}
It follows from Lemma~\ref{stable-(I)} and Example~\ref{points-(I)} that the AEC $(\rmod,\preccurlyeq)$ is $\lambda$-stable whenever $\lambda^{\operatorname{card}(R)+\aleph_0}=\lambda$ and $\preccurlyeq\in\{\leq,\leq_{\mathrm{RD}},\pleq\}$. These three results were first obtained in \cite[Lemma~3.6]{maz1}, \cite[Theorem~3.11]{mj}, and \cite[Theorem~3.16]{kuma}, respectively.
	\end{remark}
	
	\begin{question}\label{q-first}
Let $\Sigma$ be a set of first-order $\mrm{pp}$-formulas. Do Lemma~\ref{AP-(I)}, Corollary~\ref{tame-(I)}, and Lemma~\ref{stable-(I)} hold for $(\rmod,\leq_\Sigma)$?
	\end{question}

\subsection{The sublattice $\mathscr{L}^{2}_{R}$}

We now study the sublattice given by the strong submodel relations on $\rmod$ which are stronger than the pure submodule relation.

\begin{definition}
Given a ring $R$, let $\mathscr{L}^{2}_{R}$ be the sublattice of $\mathscr{L}_{R}$ above the pure submodule relation.
\end{definition}

The structure of $\mathscr{L}^{2}_{R}$ again depends strongly on the ring $R$. For instance, if $R$ is left pure semisimple, then  purity coincides with the direct summand relation \cite[Theorem~4.5.7]{prest_book2}, so $\mathscr{L}^{2}_{R}$ has a single  element. In this section, we exhibit points of $\mathscr{L}^{2}_{R}$ and show that many of them have good stability-theoretic behavior. In Sections~\ref{5.2} and~\ref{5.3}, we further investigate $\mathscr{L}_{\mathbb{Z}}$.

\begin{proposition}
$\mathscr{L}^{2}_{R}$ is a lattice with bottom element given by $\pleq$ 
\end{proposition}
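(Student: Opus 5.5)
The plan is to imitate the proof of Proposition~\ref{basic_L1} and reduce everything to Corollary~\ref{lattice}, i.e.\ to Propositions~\ref{join} and~\ref{meet}. Set $\mathscr{L}^2_R=\{\preccurlyeq\in\mathscr{L}_R\mid \preccurlyeq\,\subseteq\,\pleq\}$; with the reverse-inclusion order these are exactly the elements lying above $\pleq$. First, $\pleq\in\mathscr{L}_R$: Proposition~\ref{AEC-plea}, applied to the set of all first-order $\mrm{pp}$-formulas, places it in $\mathscr{L}^1_R\subseteq\mathscr{L}_R$. Since $\pleq\,\subseteq\,\pleq$, it lies in $\mathscr{L}^2_R$, and by definition every $\preccurlyeq\in\mathscr{L}^2_R$ satisfies $\preccurlyeq\,\subseteq\,\pleq$. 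Hence $\pleq$ is the least element of $\mathscr{L}^2_R$.

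Next I would show that $\mathscr{L}^2_R$ is closed under the lattice operations of $\mathscr{L}_R$, which makes it a sublattice. For joins, take $\preccurlyeq_1,\preccurlyeq_2\in\mathscr{L}^2_R$. Their join in $\mathscr{L}_R$ is $\preccurlyeq_1\cap\preccurlyeq_2$, which is in $\mathscr{L}_R$ by Proposition~\ref{join}(2). It is contained in $\preccurlyeq_1\subseteq\pleq$, so it lies in $\mathscr{L}^2_R$.

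For meets, Proposition~\ref{meet} identifies the meet as $\preccurlyeq^*=\bigcap\Delta$, where $\Delta$ is the set of all $\preccurlyeq\in\mathscr{L}_R$ containing both $\preccurlyeq_1$ and $\preccurlyeq_2$. The aim is $\preccurlyeq^*\subseteq\pleq$. Since $\preccurlyeq_1,\preccurlyeq_2\subseteq\pleq$ and $\pleq\in\mathscr{L}_R$, we have $\pleq\in\Delta$, and therefore $\preccurlyeq^*\subseteq\pleq$. So the meet computed in $\mathscr{L}_R$ stays in $\mathscr{L}^2_R$. It is also the greatest lower bound within $\mathscr{L}^2_R$, because any lower bound of $\preccurlyeq_1,\preccurlyeq_2$ in $\mathscr{L}^2_R$ is in particular one in $\mathscr{L}_R$.

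There is no serious obstacle. The only point needing care is this meet step: one must observe that $\pleq$ itself belongs to $\Delta$, and it does only because $\pleq$ is an AEC relation, as guaranteed by Proposition~\ref{AEC-plea}. A naive union of relations could fail transitivity, as in the remark following Proposition~\ref{meet}. The same argument, with $\mathscr{L}_R$ replaced by $\mathscr{L}_{R,\lambda}$, works for $\lambda\geq\operatorname{card}(R)+\aleph_0$, since $\mathrm{LS}(\rmod,\pleq)=\operatorname{card}(R)+\aleph_0$ by Remark~\ref{easy-rem}.
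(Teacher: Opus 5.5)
Your proof is correct and takes essentially the same route the paper intends: the proposition is stated without proof there, and like Proposition~\ref{basic_L1} it is meant to follow from Corollary~\ref{lattice}. You spell out exactly the needed points: $\pleq\in\mathscr{L}_R$, joins are intersections and so stay inside $\pleq$, and the meet $\bigcap\Delta$ stays inside $\pleq$ because $\pleq\in\Delta$.
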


Our main tool for finding new points in $\mathscr{L}^{2}_{R}$ is the use of infinitary $\mrm{pp}$-formulas.

\begin{definition}\label{or_infty}
Let $\lambda\leq\kappa$ be infinite cardinals, let $\alpha$ be an ordinal, and let $A,B\in\rmod$. We say that $A$ is $(\kappa,\lambda,\alpha)$-pure in $B$, denoted by $A\leq^{(\kappa,\lambda,\alpha)}_{\mathrm{pp}}B$,
if $A\leq_{\Lambda^{\kappa,\lambda}_\alpha}B$. Equivalently, $A\leq B$ and, for every $\varphi(\bar x)\in\Lambda^{\kappa,\lambda}_\alpha$ and every $\bar a\in A^{|\bar x|}$,
\[
A\models\varphi(\bar a)\quad\Longleftrightarrow\quad B\models\varphi(\bar a).
\]
When $\lambda=\aleph_0$, we write $\leq^{(\kappa,\alpha)}_{\mathrm{pp}}$ instead of $\leq^{(\kappa,\aleph_0,\alpha)}_{\mathrm{pp}}$.

Similarly, we write $A\leq^{(\kappa,\lambda,\infty)}_{\mathrm{pp}}B$ if $A\leq_{\Lambda^{\kappa,\lambda}_\infty}B$. When $\lambda=\aleph_0$, we write $\leq^{(\kappa,\infty)}_{\mathrm{pp}}$ instead of $\leq^{(\kappa,\aleph_0,\infty)}_{\mathrm{pp}}$.
\end{definition}

\begin{proposition}\label{equiv-order}\
\begin{enumerate}[(1)]
\item $A\pleq B$ if and only if $A\leq^{(\aleph_0,\aleph_0,\omega)}_{\mathrm{pp}}B$ if and only if $A\leq^{(\aleph_0,\aleph_0,1)}_{\mathrm{pp}}B$.
\item For every regular cardinal $\kappa$, $A\leq^{(\kappa,\infty)}_{\mathrm{pp}}B$ if and only if $A\leq^{(\kappa,\kappa)}_{\mathrm{pp}}B$.
\item For every singular cardinal $\kappa$, $A\leq^{(\kappa,\infty)}_{\mathrm{pp}}B$ if and only if $A\leq^{(\kappa,\kappa^+)}_{\mathrm{pp}}B$.
\end{enumerate}
\end{proposition}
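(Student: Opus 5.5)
The plan is to reduce everything to the normal-form results already established, Propositions~\ref{equiv-exist} and~\ref{equiv-kappa} and Remark~\ref{equiv-remark}. The governing observation is this: if every formula in a class $\Sigma$ is equivalent, relative to the theory of $R$-modules, to a formula in a class $\Sigma'\subseteq\Sigma$, then $\leq_\Sigma\,=\,\leq_{\Sigma'}$. The inclusion $\leq_\Sigma\subseteq\leq_{\Sigma'}$ holds because $\Sigma'\subseteq\Sigma$. Conversely, for $\varphi\in\Sigma$ with equivalent $\varphi'\in\Sigma'$, we have $A\models\varphi(\bar a)\iff A\models\varphi'(\bar a)\iff B\models\varphi'(\bar a)\iff B\models\varphi(\bar a)$, and the outer equivalences hold because equivalence is defined uniformly over all modules. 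So each item reduces to a syntactic equivalence statement together with the relevant inclusion of formula classes.

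For (1), the inclusions $\Lambda^{\aleph_0,\aleph_0}_1\subseteq\Lambda^{\aleph_0,\aleph_0}_\omega$ are immediate from the definition of the hierarchy. By definition, a first-order pp-formula is one equivalent to a formula in $\Lambda^{\aleph_0,\aleph_0}_\omega$, so $\pleq$ is exactly $\leq_{\Lambda^{\aleph_0,\aleph_0}_\omega}$. The one point to check is that each formula in $\Lambda^{\aleph_0,\aleph_0}_\omega$ is genuinely first order, since conjunctions have size $<\aleph_0$ and quantifier blocks are finite; hence these formulas are precisely the pp-formulas up to equivalence. It then remains to see that every formula in $\Lambda^{\aleph_0,\aleph_0}_\omega$ is equivalent to one in $\Lambda^{\aleph_0,\aleph_0}_1$. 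This follows from Remark~\ref{equiv-remark}: by the flattening in Proposition~\ref{equiv-exist}, a finite-level formula becomes $\exists\bar v\bigwedge_i\delta_i=0$ with finitely many $v$'s and finitely many equations, which is exactly a level-$1$ formula in $\Lambda^{\aleph_0,\aleph_0}_1$.

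For (2), let $\kappa$ be regular. Then $\Lambda^{\kappa,\aleph_0}_\kappa\subseteq\Lambda^{\kappa,\aleph_0}_\infty$, and Proposition~\ref{equiv-kappa} states that every formula in $\Lambda^{\kappa,\aleph_0}_\alpha$, for any $\alpha$, is equivalent to one in $\Lambda^{\kappa,\aleph_0}_\kappa$. The observation above therefore gives $\leq^{(\kappa,\infty)}_{\mathrm{pp}}=\leq^{(\kappa,\kappa)}_{\mathrm{pp}}$. Item (3) is identical, using the singular clause of Proposition~\ref{equiv-kappa} with $\kappa^+$ in place of $\kappa$.

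The only step requiring care is in (1): the bookkeeping showing that the flattened formula produced by Proposition~\ref{equiv-exist} actually lies in $\Lambda^{\aleph_0,\aleph_0}_1$. Since $\kappa=\aleph_0$ is regular, the footnote gives $\lambda<\aleph_0$, so both the block of witnesses and the conjunction are finite. Every other step is a direct application of the results already stated.
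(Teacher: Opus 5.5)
Your proposal is correct and follows essentially the same route as the paper: (1) comes from Remark~\ref{equiv-remark}, i.e.\ the flattening of Proposition~\ref{equiv-exist} at $\kappa=\aleph_0$, and (2)--(3) come from Proposition~\ref{equiv-kappa}. In each case this is combined with the observation that replacing formulas by equivalent ones from a subclass does not change $\leq_\Sigma$; you simply make explicit the bookkeeping that the paper leaves implicit.
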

\begin{proof}
Part~(1) follows from Remark~\ref{equiv-remark}, while parts~(2) and~(3) follow from Proposition~\ref{equiv-kappa}.
\end{proof}

\begin{remark}
By Corollary~\ref{ref_sums}, every relation introduced in Definition~\ref{or_infty} refines the direct summand relation.
\end{remark}

For a cardinal $\theta$, let $\theta^-:=\theta$ if $\theta$ is a limit cardinal, and let $\theta^-:=\mu$ if $\theta=\mu^+$.

\begin{lemma}\label{prop_AEC_general_infinitary}
For every cardinal $\kappa$ and every ordinal $\alpha$, $(\rmod,\leq^{(\kappa,\alpha)}_{\mathrm{pp}})$ is an AEC and
\[
\mathrm{LS}(\rmod,\leq^{(\kappa,\alpha)}_{\mathrm{pp}})
\leq 2^{(\kappa +\operatorname{card}(R)+\aleph_0)^-}.
\]
Moreover, $(\rmod,\leq^{(\kappa,\infty)}_{\mathrm{pp}})$ is an AEC and
$\mathrm{LS}(\rmod,\leq^{(\kappa,\infty)}_{\mathrm{pp}})
\leq 2^{(\kappa  +\operatorname{card}(R)+\aleph_0)^-}.$
\end{lemma}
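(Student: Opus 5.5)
The plan is to verify the AEC axioms of Definition~\ref{def_AEC} directly for $\leq^{(\kappa,\alpha)}_{\mathrm{pp}}=\leq_{\Lambda^{\kappa,\aleph_0}_\alpha}$, and to obtain the L\"owenheim--Skolem--Tarski bound by a closure argument that uses the normal form of Proposition~\ref{equiv-exist}. The abstract-class axioms are immediate. The relation is isomorphism-invariant and implies $A\leq B$, and transitivity holds because the defining biconditionals compose. For coherence, suppose $A,B\leq^{(\kappa,\alpha)}_{\mathrm{pp}}C$ with $A\leq B$, and let $B\models\varphi(\bar a)$ with $\bar a$ from $A$. Then $C\models\varphi(\bar a)$ by Proposition~\ref{preserve}(1), hence $A\models\varphi(\bar a)$. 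The converse direction is again Proposition~\ref{preserve}(1) applied to the inclusion $A\leq B$.

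For the Tarski--Vaught axioms, I will prove by induction on $\beta\leq\alpha$ that, for an increasing continuous chain $(A_i)_{i<\delta}$ of $\leq^{(\kappa,\alpha)}_{\mathrm{pp}}$-submodels with union $A_\delta$, we have $A_j\leq_{\Lambda^{\kappa,\aleph_0}_\beta}A_\delta$ for every $j<\delta$. Upward preservation always follows from Proposition~\ref{preserve}(1). The base case $\beta=0$ and the limit case are trivial.

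For the successor case, let $\varphi=\exists\bar y\bigwedge\Psi$ with $\Psi\subseteq\Lambda_\beta$, and suppose $A_\delta\models\varphi(\bar a)$ with $\bar a$ from $A_j$. The crucial point is that $\bar y$ is \emph{finite}, because the second index is $\aleph_0$. Hence a witness $\bar c$ lies in some $A_k$ with $k\geq j$. The induction hypothesis gives $A_k\models\psi(\bar a,\bar c)$ for all $\psi\in\Psi$, so $A_k\models\varphi(\bar a)$. Since $A_j\leq^{(\kappa,\alpha)}_{\mathrm{pp}}A_k$, we get $A_j\models\varphi(\bar a)$.

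Smoothness is easier. Suppose each $A_i\leq^{(\kappa,\alpha)}_{\mathrm{pp}}B$ and $B\models\varphi(\bar a)$. The tuple $\bar a$ lies in some $A_i$, so $A_i\models\varphi(\bar a)$, and then $A_\delta\models\varphi(\bar a)$ by preservation.

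The main work is the L\"owenheim--Skolem--Tarski axiom. By Proposition~\ref{equiv-exist}, every $\varphi(\bar x)\in\Lambda^{\kappa,\aleph_0}_\alpha$ is equivalent to a formula of the form $\exists\bar v\bigwedge_{i<\lambda}\delta_i(\bar v,\bar x)=0$, with $\lambda\leq\kappa$, and with $\lambda<\kappa$ when $\kappa$ is regular. Such a formula is determined by a $\lambda$-sequence of finitary linear equations with coefficients in $R$. Let $\mu=\operatorname{card}(R)+\aleph_0$. Then the number $N$ of normal forms is at most $\mu^{<\kappa}$ in the regular case and $\mu^{\kappa}$ in the singular case. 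I will check by cases that $N\leq 2^{(\kappa+\mu)^-}$:
\begin{itemize}
\item if $\kappa+\mu=\nu^+$ is a successor, then $\lambda\leq\nu$ and the count is at most $\mu^\nu\leq 2^{\nu}$;
\item if $\kappa+\mu$ is a limit cardinal, then the count is at most $(\kappa+\mu)^{\kappa}\leq 2^{\kappa+\mu}$.
\end{itemize}
Set $\theta=2^{(\kappa+\mu)^-}$, and note that $\theta\geq N$ and $\theta\geq\kappa$.

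Given $X\subseteq B$, build submodules $A_0\subseteq A_1\subseteq\cdots$ of $B$, where $A_0$ is generated by $X$. To form $A_{n+1}$, take each normal form $\varphi$ and each finite $\bar a$ from $A_n$ with $B\models\varphi(\bar a)$, fix a witness $\bar v\in B^{\lambda}$, add all its entries, and close under submodule generation. Then $|A_{n+1}|\leq|A_n|+\theta\cdot\kappa$, so $A=\bigcup_n A_n$ has $|A|\leq|X|+\theta$.

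To see that $A\leq^{(\kappa,\alpha)}_{\mathrm{pp}}B$, suppose $B\models\varphi(\bar a)$ with $\bar a$ from $A$. Then $\bar a$ lies in some $A_n$, and the chosen witness lies in $A_{n+1}\subseteq A$. The matrix $\bigwedge_i\delta_i=0$ is quantifier-free, so it holds in $A$, and therefore $A\models\varphi(\bar a)$. This is exactly why passing to the depth-one normal form is the key step.

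For the moreover part, the same argument applies verbatim. By Proposition~\ref{equiv-exist}, every formula of $\Lambda^{\kappa,\aleph_0}_\infty$ has the same kind of normal form, with the same bound $N$. Alternatively, Proposition~\ref{equiv-order}(2),(3) reduces $\leq^{(\kappa,\infty)}_{\mathrm{pp}}$ to $\leq^{(\kappa,\kappa)}_{\mathrm{pp}}$ or $\leq^{(\kappa,\kappa^+)}_{\mathrm{pp}}$. I expect the only delicate step to be the cardinal arithmetic for $N$; the AEC axioms themselves hinge only on the finiteness of the existential blocks.
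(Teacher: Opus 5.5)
Your proof is correct. The abstract-class axioms, Tarski--Vaught and coherence are verified essentially as in the paper: the same induction on $\beta\leq\alpha$, driven by the finiteness of the existential block. Your smoothness argument is actually more direct than the paper's ``similarly''.

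Where you genuinely diverge is the L\"owenheim--Skolem--Tarski axiom. The paper simply invokes the downward L\"owenheim--Skolem theorem for $\mathfrak{L}_{\kappa,\omega}(\tau_R)$. You instead do the following:
\begin{enumerate}[(1)]
\item flatten every formula, via Proposition~\ref{equiv-exist}, into a single existential block over a conjunction of linear equations;
\item count these normal forms ($\mu^{<\kappa}$ or $\mu^{\kappa}$, using that $(\lambda+\mu)^\lambda=\mu^\lambda$);
\item check by cases that this count is at most $2^{(\kappa+\mu)^-}$;
\item run an $\omega$-step witness closure inside $B$.
\end{enumerate}
The closure works because the matrix is quantifier-free, so it transfers to any submodule containing the chosen witnesses.

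The paper's route is shorter. However, it leaves implicit why the bound is independent of $\alpha$ and where the particular cardinal $2^{(\kappa+\mu)^-}$ comes from; one has to bound the number of formulas up to equivalence, e.g.\ via Proposition~\ref{equiv-kappa}. Your route is elementary and self-contained, and it makes both points explicit. It is also exactly the ``standard witness closing construction'' the paper later relies on in Theorem~\ref{sta-nice} and Lemma~\ref{iso_AEC}.

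One small detail: in the successor case $\kappa+\mu=\nu^+$ of your count, say explicitly that a singular $\kappa$ cannot equal $\nu^+$. Hence $\lambda\leq\kappa\leq\nu$ holds there too, and with that the arithmetic is complete.
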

\begin{proof}
The \emph{moreover part} follows from Proposition~\ref{equiv-order}(2) and~(3), so it is enough to prove the main assertion. Fix a cardinal $\kappa$ and an ordinal $\alpha$. It is clear that $(\rmod,\leq^{(\kappa,\alpha)}_{\mathrm{pp}})$ is an abstract class. We verify Conditions~(4)--(6) of Definition~\ref{def_AEC}.

\begin{enumerate}[(4)]
\item[(4)] Let $(A_i)_{i<\delta}$ be an increasing continuous $\leq^{(\kappa,\alpha)}_{\mathrm{pp}}$-chain, and let $A:=\bigcup_{i<\delta}A_i$. Condition~(4.1) is clear. We prove Condition~(4.2); the proof of Condition~(4.3) is similar.

\smallskip \noindent We show by induction on $\beta\leq\alpha$ that, for every $i<\delta$, every $\varphi(\bar x)\in\Lambda^{\kappa,\aleph_0}_\beta$, and every $\bar a\in A_i^{|\bar x|}$,
$A\models\varphi(\bar a)$ if and only if $A_i\models\varphi(\bar a).$

\smallskip \noindent If $\beta=0$, the assertion follows because every formula in $\Lambda^{\kappa,\aleph_0}_0$ is a linear equation. If $\beta$ is a limit ordinal, the assertion follows directly from the induction hypothesis. Suppose that $\beta=\gamma+1$, fix $i<\delta$, let $\varphi(\bar x)\in\Lambda^{\kappa,\aleph_0}_{\gamma+1}$, and let $\bar a\in A_i^{|\bar x|}$.

\smallskip \noindent By Proposition~\ref{preserve}, $A_i\models\varphi(\bar a)$ implies $A\models\varphi(\bar a)$. Conversely, suppose that $A\models\varphi(\bar a)$. Then
\[
\varphi(\bar x)=\exists\bar y\bigwedge_{\psi\in\Psi}\psi(\bar x,\bar y),
\]
for some $|\bar y|<\aleph_0$ and some $\Psi\subseteq\Lambda^{\kappa,\aleph_0}_\gamma$ with $|\Psi|<\kappa$. Choose a finite tuple $\bar b\in A$ such that
$A\models\bigwedge_{\psi\in\Psi}\psi(\bar a,\bar b).$
There is $j\geq i$ such that $\bar a,\bar b\in A_j$. By the induction hypothesis,
$A_j\models\bigwedge_{\psi\in\Psi}\psi(\bar a,\bar b),$
so $A_j\models\varphi(\bar a)$. Since $A_i\leq^{(\kappa,\alpha)}_{\mathrm{pp}}A_j$, we conclude that $A_i\models\varphi(\bar a)$.

\item[(5)] Coherence is immediate.

\item[(6)] Since $\Lambda^{\kappa,\aleph_0}_\alpha\subseteq\mathfrak{L}_{\kappa,\omega}(\tau_R)$, the downward L\"owenheim--Skolem theorem for $\mathfrak{L}_{\kappa,\omega}(\tau_R)$ yields
the desired inequality (se for example \cite[Theorem 1.23]{mar}).
\end{enumerate}
\end{proof}

We next introduce a point of $\mathscr{L}^{2}_{R}$ that admits both an infinitary $\mrm{pp}$-characterization and an algebraic characterization. In Section~\ref{5.3}, we show that, over $\mathbb Z$, it acts as a \emph{dividing point} for the amalgamation property.

\begin{definition}\label{def_weak_pp}
A \emph{weak infinitary $\mrm{pp}$-formula} is a formula of the form
\[
\exists\bar y\bigwedge_{i<\kappa}\psi_i(\bar x,\bar y),
\]
with $|\bar x|,|\bar y|<\aleph_0$, $\kappa$ is a cardinal, and each $\psi_i(\bar x,\bar y)$ is a first-order $\mrm{pp}$-formula. We denote the set of all weak $\mrm{pp}$-formulas by $\Lambda^{\mathrm{wpp}}_\infty$.

\smallskip \noindent For $A,B\in\rmod$, we define $A\pleqq B$ if and only if $A\leq_{\Lambda^{\mathrm{wpp}}_\infty}B$. Equivalently, $A\leq B$ and, for every $\varphi(\bar x)\in\Lambda^{\mathrm{wpp}}_\infty$ and every $\bar a\in A^{|\bar x|}$,
\[
A\models\varphi(\bar a)\quad\Longleftrightarrow\quad B\models\varphi(\bar a).
\]
\end{definition}

\begin{remark}
Suppose that there is a finitely generated $R$-module $A$ which is not pure injective. Then $\pleqq$ is strictly stronger than $\pleq$. 

Indeed, let $B$ be the pure injective envelope of $A$ and fix a finite generating tuple $\bar a$ for $A$. We have that $A\pleq B$ and $A\lneq B$. Since $A$ is not pure-injective, there is a $\mrm{pp}$-type over $A$ which is finitely satisfiable in $A$ but not realized in $A$ (see for example \cite[Theorem 2.8]{prest_book1}). As every parameter from $A$ is an $R$-linear combination of $\bar a$, we may write this type as $p(y,\bar a)$. Since $B$ is pure-injective, $p(y,\bar a)$ is realized in $B$. Hence
\[
B\models\exists y\bigwedge_{\varphi \in p} \varphi( y,\bar a)
\quad\text{but}\quad
A\not\models\exists y\bigwedge_{\varphi \in p} \varphi( y,\bar a).
\]
The displayed formula is a weak infinitary $\mrm{pp}$-formula, so $A\not\pleqq B$. 

In particular, the hypothesis holds for $R=\mathbb Z$, taking $A=\mathbb Z$.
\end{remark}

\begin{remark}\label{rmk-compare}
There are at most $\operatorname{card}(R)+\aleph_0$ first-order $\mrm{pp}$-formulas in a finite number of variables $\bar x,\bar y$. Hence every $\varphi(\bar x)\in\Lambda^{\mathrm{wpp}}_\infty$ is equivalent, modulo the theory of $R$-modules, to a formula in $\Lambda^{(\operatorname{card}(R)+\aleph_0)^+,\aleph_0}_2$. In particular,
$\leq^{((\operatorname{card}(R)+\aleph_0)^+,\infty)}_{\mathrm{pp}} \subseteq\pleqq,$
and $\pleqq$ refines the direct summand relation.
\end{remark}

\begin{lemma}
The class $(\rmod,\pleqq)$ is an AEC with
\[
\mathrm{LS}(\rmod,\pleqq)\leq 2^{\operatorname{card}(R)+\aleph_0}.
\]
\end{lemma}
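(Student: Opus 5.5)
The plan is to follow the template of Lemma~\ref{prop_AEC_general_infinitary}: verify the abstract-class axioms, the Tarski--Vaught axioms, coherence, and then the L\"owenheim--Skolem--Tarski axiom. By Remark~\ref{rmk-compare} we may replace every weak infinitary $\mrm{pp}$-formula by one with at most $\mu:=\operatorname{card}(R)+\aleph_0$ conjuncts. So we work with the \emph{set} $\Sigma$ of formulas $\exists\bar y\bigwedge_{\psi\in\Psi}\psi(\bar x,\bar y)$, where $\Psi$ is a set of first-order $\mrm{pp}$-formulas in the finitely many variables $\bar x\bar y$; note $|\Sigma|\leq 2^{\mu}$. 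Then $\pleqq=\leq_\Sigma$.

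Closure under isomorphism, the partial-order axioms and coherence are immediate. Preservation from the smaller to the larger module is Proposition~\ref{preserve}, so only reflection has to be checked.

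For the chain axioms, let $(A_i)_{i<\delta}$ be a $\pleqq$-chain with union $A$, and let $\bar a\in A_i$. Suppose $A\models\exists\bar y\bigwedge_{\psi\in\Psi}\psi(\bar a,\bar y)$, witnessed by a \emph{finite} tuple $\bar b$. Choose $j\geq i$ with $\bar b\in A_j$. Each $\psi$ is a first-order $\mrm{pp}$-formula and $A_j\pleq A$, since purity is preserved under unions of pure chains. Hence $A_j\models\bigwedge\psi(\bar a,\bar b)$, and $A_i\pleqq A_j$ then gives the formula in $A_i$. This proves (4.2).

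Smoothness (4.3) is similar, now using $A_i\pleqq B$ for all $i$. Assume $B\models\varphi(\bar a)$ with $\bar a\in A_i$. Then $A_i\models\varphi(\bar a)$ by $A_i\pleqq B$, and $A\models\varphi(\bar a)$ by preservation. The finiteness of $\bar y$ is the key point in both arguments, since it lets a witness land in a single $A_j$.

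The main step is L\"owenheim--Skolem--Tarski with bound $2^{\mu}$. Given $X\subseteq B$, build an increasing chain $X\subseteq A_0\subseteq A_1\subseteq\cdots$ of submodules of $B$ with $|A_n|\leq|X|+2^{\mu}$. To pass from $A_n$ to $A_{n+1}$, consider every $\varphi\in\Sigma$ and every finite $\bar a\in A_n$ with $B\models\varphi(\bar a)$. There are at most $|A_n|+2^{\mu}$ such pairs, and for each we add a finite witness $\bar b\in B$. Let $A_{n+1}$ be the submodule generated by $A_n$ and all these witnesses.

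Put $A=\bigcup_n A_n$. Any $\bar a\in A$ lies in some $A_n$, so if $B\models\varphi(\bar a)$ there is a witness $\bar b\in A_{n+1}\subseteq A$. Because the conjuncts $\psi$ are first-order $\mrm{pp}$-formulas, we still need $A\models\psi(\bar a,\bar b)$. To get this, I would also close at each stage under witnesses for all first-order $\mrm{pp}$-formulas, which amounts to interleaving the closure from Proposition~\ref{AEC-plea}; this makes $A\pleq B$. Then $A\models\bigwedge\psi(\bar a,\bar b)$, so $A\models\varphi(\bar a)$, and $A\pleqq B$.

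The only real obstacle is this interleaving, which ensures that the inner first-order $\mrm{pp}$-conjuncts hold in $A$ and not merely in $B$; the rest is cardinal bookkeeping, giving $|A|\leq|X|+2^{\mu}$.
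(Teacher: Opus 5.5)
Your proposal is correct and follows essentially the paper's route. The paper also verifies the axioms as in Lemma~\ref{prop_AEC_general_infinitary} and gets the bound $2^{\operatorname{card}(R)+\aleph_0}$ from the reduction in Remark~\ref{rmk-compare}, whereas you handle the inner first-order conjuncts via purity of unions of pure chains (plus an interleaved pure closure) and carry out the witness-closing by hand, where the paper runs an induction on formula complexity and cites the downward L\"owenheim--Skolem theorem for $\mathfrak{L}_{\kappa,\omega}$ with $\kappa=(\operatorname{card}(R)+\aleph_0)^+$.
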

\begin{proof}
The proof is similar to  Lemma \ref{prop_AEC_general_infinitary} where $\mathrm{LS}(\rmod, \pleqq)\leq 2^{\mrm{card}(R)+\aleph_0}$ follows from Remark  \ref{rmk-compare}.
\end{proof}

We use the following definition from \cite{ziegler}.

\begin{definition}
Let $A$ and $B$ be modules, and let $Z\subseteq A$. A function $f:Z\to B$ is a \emph{$\mrm{pp}$-$(A,B)$-homomorphism} if, for every first-order $\mrm{pp}$-formula $\varphi(\bar x)$ and every $\bar z\in Z^{|\bar x|}$,
\[
A\models\varphi(\bar z)\quad\Longrightarrow\quad B\models\varphi(f(\bar z)).
\]
\end{definition}

\begin{lemma}
Let $A,B\in\rmod$. The following are equivalent:
\begin{enumerate}[(1)]
\item $A\pleqq B$.
\item $A\leq B$ and, for every $\bar a\in A^{<\omega}$ and $\bar b\in B^{<\omega}$, there is a $\mrm{pp}$-$(B,A)$-homomorphism $f$ whose domain contains the entries of $\bar a\bar b$, such that $f(\bar a)=\bar a$ and $f(\bar b)\in A^{|\bar b|}$.
\end{enumerate}
\end{lemma}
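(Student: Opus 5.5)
We prove the two implications separately. In both directions the key is to identify the existence of such an $f$ with the realization in $A$ of a weak infinitary $\mrm{pp}$-formula. Fix $\bar a\in A^{<\omega}$ and $\bar b\in B^{<\omega}$, and let
\[
p(\bar x,\bar y)=\{\psi(\bar x,\bar y)\mid \psi \text{ a first-order }\mrm{pp}\text{-formula},\ B\models\psi(\bar a,\bar b)\}
\]
be the $\mrm{pp}$-type of $\bar a\bar b$ in $B$. Consider
\[
\varphi(\bar x)=\exists\bar y\bigwedge_{\psi\in p}\psi(\bar x,\bar y),
\]
which lies in $\Lambda^{\mathrm{wpp}}_\infty$, since the conjunction has at most $\operatorname{card}(R)+\aleph_0$ conjuncts and $\bar x,\bar y$ are finite.

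\emph{(1)$\Rightarrow$(2).} Clearly $B\models\varphi(\bar a)$, witnessed by $\bar b$. Since $A\pleqq B$, we get $A\models\varphi(\bar a)$, so we may pick $\bar c\in A^{|\bar b|}$ with $A\models\psi(\bar a,\bar c)$ for every $\psi\in p$. Define $f$ on the entries of $\bar a\bar b$ by $f(\bar a)=\bar a$ and $f(\bar b)=\bar c$.

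We must check that $f$ is a well-defined $\mrm{pp}$-$(B,A)$-homomorphism. For well-definedness, a coincidence among entries, such as $a_i=b_j$ or $b_j=b_k$, is recorded by an equation $x_i-y_j=0$ or $y_j-y_k=0$ in $p$, so the assigned values agree. For the homomorphism property, any $\mrm{pp}$-formula true in $B$ of a tuple of entries of $\bar a\bar b$ can be rewritten, after repeating and permuting variables, as a $\mrm{pp}$-formula in $(\bar x,\bar y)$ belonging to $p$. It is therefore satisfied by $(\bar a,\bar c)$ in $A$.

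\emph{(2)$\Rightarrow$(1).} Let $\varphi(\bar x)=\exists\bar y\bigwedge_{i<\kappa}\psi_i(\bar x,\bar y)$ be any weak infinitary $\mrm{pp}$-formula and let $\bar a\in A^{|\bar x|}$. If $A\models\varphi(\bar a)$, then $B\models\varphi(\bar a)$ by Proposition~\ref{preserve}, since the inclusion is a homomorphism and weak $\mrm{pp}$-formulas are (up to equivalence) in $\Lambda_2^{\infty,\aleph_0}$.

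Conversely, suppose $B\models\varphi(\bar a)$, witnessed by $\bar b\in B^{|\bar y|}$. By (2) there is a $\mrm{pp}$-$(B,A)$-homomorphism $f$ defined on $\bar a\bar b$ with $f(\bar a)=\bar a$ and $\bar c:=f(\bar b)\in A^{|\bar b|}$. Each $\psi_i$ is a first-order $\mrm{pp}$-formula with $B\models\psi_i(\bar a,\bar b)$, so $A\models\psi_i(\bar a,\bar c)$ for all $i<\kappa$. Hence $\bar c$ witnesses $A\models\varphi(\bar a)$. Together with $A\leq B$, this gives $A\pleqq B$.

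No serious obstacle is expected. The only care needed is in the forward direction: checking that $f$ is well defined when entries of $\bar a\bar b$ repeat or overlap, and noting that $\mrm{pp}$-formulas about arbitrary subtuples are captured by $p$ through substitution of variables. Both points are handled by including equations and variable-identified formulas in $p$.
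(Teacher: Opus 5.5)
Your proposal is correct and follows essentially the same route as the paper. For (1)$\Rightarrow$(2), both use $\pleqq$ to realize in $A$ the weak infinitary $\mrm{pp}$-formula $\exists\bar y\bigwedge p$ built from the $\mrm{pp}$-type of $\bar a\bar b$, and for (2)$\Rightarrow$(1), both transport the witnesses along the $\mrm{pp}$-$(B,A)$-homomorphism, with well-definedness on repeated entries of $\bar a\bar b$ (which you check explicitly) left implicit in the paper.
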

\begin{proof}
Suppose first that $A\pleqq B$. Fix $\bar a\in A^{<\omega}$ and $\bar b\in B^{<\omega}$, and let $p=\operatorname{pp}(\bar b/\bar a;B)$. Then
$B\models\bigwedge_{\varphi\in p}\varphi(\bar b,\bar a),$
and hence
$B\models\exists\bar y \bigwedge_{\varphi\in p}\varphi(\bar y,\bar a).$
Since $A\pleqq B$, there is $\bar c\in A^{|\bar b|}$ such that
$A\models\bigwedge_{\varphi\in p}\varphi(\bar c,\bar a).$
The function on the entries of $\bar a\bar b$ that fixes $\bar a$ and sends $\bar b$ to $\bar c$ is therefore a $\mrm{pp}$-$(B,A)$-homomorphism.

\smallskip \noindent Conversely, let
\[
\varphi(\bar x)=\exists\bar y\bigwedge_{i<\kappa}\psi_i(\bar x,\bar y),
\]
let $\bar a\in A^{|\bar x|}$, and suppose that $B\models\varphi(\bar a)$. Choose $\bar b\in B^{|\bar y|}$ such that
$B\models\bigwedge_{i<\kappa}\psi_i(\bar a,\bar b).$
By assumption, there is a $\mrm{pp}$-$(B,A)$-homomorphism $f$ that fixes $\bar a$ and sends $\bar b$ into $A$. Thus
$A\models\bigwedge_{i<\kappa}\psi_i(\bar a,f(\bar b)),$
so $A\models\varphi(\bar a)$. The reverse implication follows from Proposition~\ref{preserve}, and hence $A\pleqq B$.
\end{proof}

We now show that stability is pervasive in $\mathscr{L}^{2}_{R}$. This contrasts with amalgamation, which, as we will show in Section~\ref{5.3}, often fails.

\begin{definition}\label{pre_def_nice}
A set $\Sigma\subseteq\Lambda^{\lambda^+,\lambda^+}_1$, where $\lambda$ is an infinite cardinal with $\operatorname{card}(R)\leq\lambda$, is \emph{E-nice} if every $\varphi(\bar x)\in\Sigma$ has the form
\[
\exists\bar v\bigwedge_{i<\lambda}\delta_i(\bar v,\bar x)=0,
\]
with $\bar x=(x_0,\ldots,x_{n-1})$, $\bar v=(v_\ell)_{\ell<\lambda}$ and
\[
\delta_i(\bar v,\bar x)
\doteq
\sum_{\ell<\lambda}r_{i,\ell}v_\ell+
\sum_{j<n}q_{i,j}x_j,
\]
with $r_{i,\ell},q_{i,j}\in R$ and for each $i<\lambda$, $r_{i,\ell} \neq 0$ for only finitely many $\ell < \lambda$.

\smallskip \noindent A binary relation $\preccurlyeq$ on $\rmod$ is \emph{E-nice with respect to $\Sigma$} if $\Sigma$ is E-nice and $\preccurlyeq=\leq_\Sigma$. We say  $\preccurlyeq$ is \emph{E-nice} if it is E-nice with respect to some such $\Sigma$.
\end{definition}

\begin{remark}\label{card-sigma}\
\begin{enumerate}[$\bullet$]
\item It is important to note that if $\preccurlyeq$ is \emph{E-nice}, we do not necessarily have that $\preccurlyeq \in \mathscr{L}_{R}$.
\item  If $\Sigma \subseteq \Lambda^{\lambda^+, \lambda^+}_1$ is E-nice, then $|\Sigma| \leq 2^\lambda$  by the syntactic structure of the formulas in $\Sigma$.
\end{enumerate}
\end{remark}

\begin{lemma}\label{first_nice_lemma}
If
$\, \preccurlyeq\in
\{\leq^{(\kappa,\alpha)}_{\mathrm{pp}}\mid \kappa\in\mathrm{Card},\ \alpha\in\mathrm{Ord}\}
\cup\{\leq^{(\kappa,\infty)}_{\mathrm{pp}}\mid \kappa\in\mathrm{Card}\}
\cup\{\pleqq\},$
then $\preccurlyeq$ is E-nice.
\end{lemma}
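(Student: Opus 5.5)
The plan is to show that each of the listed relations equals $\leq_\Sigma$ for some E-nice set $\Sigma$. Since $\leq_\Sigma$ depends only on the formulas of $\Sigma$ up to equivalence relative to the theory of $R$-modules, it suffices to exhibit, for each relation, a set $\Sigma'$ of formulas defining it and a cardinal $\lambda\geq\operatorname{card}(R)+\aleph_0$ such that every formula of $\Sigma'$ is equivalent to one of the exact shape required in Definition~\ref{pre_def_nice} with index set $\lambda$. We then take $\Sigma$ to be the set of these normal forms. The main tool is Proposition~\ref{equiv-exist}.

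\textbf{The relations $\leq^{(\kappa,\alpha)}_{\mathrm{pp}}$.} Here $\Sigma'=\Lambda^{\kappa,\aleph_0}_\alpha$, and formulas have finitely many free variables $\bar x=(x_0,\ldots,x_{n-1})$. By Proposition~\ref{equiv-exist}, each $\varphi(\bar x)\in\Sigma'$ is equivalent to
\[
\exists\bar v\bigwedge_{i<\mu}\delta_i(\bar v,\bar x)=0
\]
for some $\mu\leq\kappa$, where each $\delta_i$ involves only finitely many $v_\ell$. Set $\lambda=\kappa+\operatorname{card}(R)+\aleph_0$. We pad to length exactly $\lambda$ as follows:
\begin{enumerate}[(i)]
\item add dummy variables $v_\ell$ for $\mu\leq\ell<\lambda$, which occur with zero coefficients;
\item add trivial equations $\delta_i\doteq 0$ (all coefficients zero) for $\mu\leq i<\lambda$.
\end{enumerate}
Neither step changes the meaning of the formula, and the result lies in $\Lambda^{\lambda^+,\lambda^+}_1$ in the required shape. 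The case $\mu$ finite, including $\mu=0$, is covered by the same padding. The relations $\leq^{(\kappa,\infty)}_{\mathrm{pp}}$ reduce to this case via Proposition~\ref{equiv-order}(2),(3), which identifies them with $\leq^{(\kappa,\kappa)}_{\mathrm{pp}}$ or $\leq^{(\kappa,\kappa^+)}_{\mathrm{pp}}$. Alternatively, one can apply Proposition~\ref{equiv-exist} uniformly across all $\alpha$, since the bound $\mu\leq\kappa$ does not depend on $\alpha$.

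\textbf{The relation $\pleqq$.} Take $\lambda=\operatorname{card}(R)+\aleph_0$. A weak infinitary $\mrm{pp}$-formula $\exists\bar y\bigwedge_{i<\kappa}\psi_i(\bar x,\bar y)$ involves at most $\lambda$ distinct first-order $\mrm{pp}$-formulas in the finitely many variables $\bar x\bar y$ (Remark~\ref{rmk-compare}). So we may assume $\kappa\leq\lambda$, and the formula lies in $\Lambda^{\lambda^+,\aleph_0}_2$ up to equivalence. Writing each $\psi_i$ as $\exists\bar w_i\,(H_i\bar w_i=K_i(\bar x\bar y))$, we merge all existential variables into one tuple $\bar v$, consisting of $\bar y$ together with disjoint copies of the $\bar w_i$. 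Then the formula becomes a single existential quantifier over at most $\lambda$ variables in front of at most $\lambda$ linear equations, each with finitely many nonzero coefficients. We pad to exactly $\lambda$ as before.

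\textbf{Expected difficulty.} The argument is essentially bookkeeping. The only point needing care is that Definition~\ref{pre_def_nice} fixes a single $\lambda$ for the whole set $\Sigma$, with exactly $\lambda$ variables and $\lambda$ equations in each formula. This requires a uniform bound on the length of every normal form, which is why we use the uniform bound $\mu\leq\kappa$ from Proposition~\ref{equiv-exist} and the padding with dummy variables and trivial equations.
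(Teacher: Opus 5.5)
Your proof is correct and follows the paper's route: Proposition~\ref{equiv-exist} for $\leq^{(\kappa,\alpha)}_{\mathrm{pp}}$, Propositions~\ref{equiv-order} and~\ref{equiv-exist} for $\leq^{(\kappa,\infty)}_{\mathrm{pp}}$, and Remark~\ref{rmk-compare} together with the existential normal form (which you re-derive directly) for $\pleqq$. The paper leaves implicit the padding with dummy variables and trivial equations, which makes every formula use a single uniform $\lambda$ as Definition~\ref{pre_def_nice} requires; your handling of that detail is sound.
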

\begin{proof}
For $\prec \, \in\{\leq^{(\kappa, \alpha)}_{\mathrm{pp}}\, \mid \,  \kappa \in \mrm{Card} \, ,  \alpha \in \mrm{Ord}\}$ it follows from Proposition \ref{equiv-exist}. For $\prec \, \in\{\leq^{(\kappa, \infty)}_{\mathrm{pp}} \, \mid \, \kappa \in \mrm{Card} \}$ it follows from Propositions \ref{equiv-order} and \ref{equiv-exist}. For $\prec = \pleqq$ it follows from Remark \ref{rmk-compare} and Proposition \ref{equiv-exist}.
\end{proof}

\begin{definition}
Suppose that
\[
\varphi(\bar x):=\exists\bar v\bigwedge_{i<\lambda}\delta_i(\bar v,\bar x)=0,
\]
with $\bar x=(x_0,\ldots,x_{n-1})$, $\bar v=(v_\ell)_{\ell<\lambda}$ and
\[
\delta_i(\bar v,\bar x)
\doteq
\sum_{\ell<\lambda}r_{i,\ell}v_\ell+
\sum_{j<n}q_{i,j}x_j.
\]

with $r_{i,\ell},q_{i,j}\in R$ and for each $i<\lambda$, $r_{i,\ell} \neq 0$ for only finitely many $\ell < \lambda$. 

Let $\varphi^*(\bar x',\bar x'')$ be the formula
\[
\exists\bar v\bigwedge_{i<\lambda}\delta_i^*(\bar v,\bar x',\bar x'')=0,
\]
with $\bar x'=(x_0,\ldots,x_{n-1})$, $\bar x''=(x_n,\ldots,x_{2n-1})$, and
\[
\delta_i^*(\bar v,\bar x',\bar x'')
\doteq
\sum_{\ell<\lambda}r_{i,\ell}v_\ell+
\sum_{j<n}q_{i,j}x_j+
\sum_{j<n}q_{i,j}x_{n+j}.
\]
\end{definition}

\begin{proposition}\label{easy-nice}
Assume that $\preccurlyeq$ is E-nice with respect to $\Sigma$ and that $\mathcal K=(\mathbf K,\preccurlyeq)$ is an AEC with $\mathbf K \subseteq\rmod$. If $\varphi(\bar x)\in\Sigma$, $A\preccurlyeq B$ with $A,B\in\mathbf K$, and $\bar a,\bar a'\in A^{|\bar x|}$, then
$A\models\varphi^*(\bar a,\bar a')$ if and only if $B\models\varphi^*(\bar a,\bar a').$
\end{proposition}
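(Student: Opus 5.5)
The point is that $\varphi^*(\bar x',\bar x'')$ is equivalent, relative to the theory of $R$-modules, to $\varphi(\bar x'+\bar x'')$. Indeed, each $\delta_i^*(\bar v,\bar x',\bar x'')$ is literally $\sum_{\ell<\lambda}r_{i,\ell}v_\ell+\sum_{j<n}q_{i,j}(x_j+x_{n+j})$. So for any module $M$ and $\bar c,\bar c'\in M^n$, a tuple $\bar v$ witnesses $\varphi^*(\bar c,\bar c')$ in $M$ if and only if it witnesses $\varphi(\bar c+\bar c')$ in $M$. I would record this first as a one-line observation, valid in every $R$-module.

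With this in hand, fix $\varphi\in\Sigma$, $A\preccurlyeq B$ in $\mathbf K$, and $\bar a,\bar a'\in A^n$. Put $\bar d=\bar a+\bar a'$. Since $A\leq B$ (an abstract class axiom), $\bar d\in A^n$ and $\bar d$ is computed identically in $A$ and in $B$. Then
\[
A\models\varphi^*(\bar a,\bar a')\iff A\models\varphi(\bar d)\iff B\models\varphi(\bar d)\iff B\models\varphi^*(\bar a,\bar a').
\]
The middle equivalence holds because $\preccurlyeq=\leq_\Sigma$ and $\varphi\in\Sigma$, so $A\leq_\Sigma B$ gives exactly agreement on $\varphi$ at tuples from $A$. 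The outer equivalences are the observation applied in $A$ and in $B$ respectively.

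I expect no real obstacle. The only thing to check carefully is that the observation is purely syntactic: the conjunction is over the same $\lambda$ equations, and the same witnessing $\lambda$-tuple $\bar v$ is used on both sides, so nothing about the infinitary nature of the formula intervenes. The AEC hypothesis on $\mathcal K$ is not needed beyond $A\preccurlyeq B$ implying $A\leq_\Sigma B$. I would also note the forward direction alone follows from Proposition~\ref{preserve}, but the above argument gives both directions at once.
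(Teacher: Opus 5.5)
Your proposal is correct and matches the paper's proof. The paper gets the forward direction from Proposition~\ref{preserve} and the reverse direction by passing through $\varphi(\bar a+\bar a')$ and $A\leq_\Sigma B$. Your observation that $\varphi^*(\bar x',\bar x'')$ is equivalent to $\varphi(\bar x'+\bar x'')$ handles both directions in the same way.
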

\begin{proof}
The forward implication follows from Proposition~\ref{preserve}. Conversely, suppose that $B\models\varphi^*(\bar a,\bar a')$. Then $B\models\varphi(\bar a+\bar a')$. Since $\bar a+\bar a'\in A^{|\bar x|}$, $\varphi(\bar x) \in\Sigma$, and $A\leq_\Sigma B$, we have that $A\models\varphi(\bar a+\bar a')$. Hence $A\models\varphi^*(\bar a,\bar a')$.
\end{proof}

If $A,C\leq B$, let $\langle AC\rangle_B$ denote the submodule of $B$ generated by $A\cup C$. Thus
$\langle AC\rangle_B=A+C=\{a+c\mid a\in A,\ c\in C\}.$

\begin{remark}\label{star-rmk}
Assume that $\preccurlyeq$ is E-nice with respect to $\Sigma$ and that $\mathcal K=(\mathbf K,\preccurlyeq)$ is an AEC with $\mathbf K \subseteq\rmod$. Suppose that $A,C\leq B$, that $\langle AC\rangle_B,B\in\mathbf K$, and that $\langle AC\rangle_B\not\preccurlyeq B$. Then there are $\varphi(\bar x)\in\Sigma$, with $|\bar x|=n<\omega$, and $\bar d\in\langle AC\rangle_B^n$ such that
\[
B\models\varphi(\bar d)
\quad\text{and}\quad
\langle AC\rangle_B\not\models\varphi(\bar d).
\]
Writing $\bar d=\bar a+\bar c$, with $\bar a\in A^n$ and $\bar c\in C^n$, we obtain
\[
B\models\varphi^*(\bar a,\bar c)
\quad\text{and}\quad
\langle AC\rangle_B\not\models\varphi^*(\bar a,\bar c).
\]
\end{remark}

\begin{theorem}\label{sta-nice}
Assume that $\preccurlyeq$ is E-nice. If $\mathbf K\subseteq\rmod$ satisfies the following conditions:
\begin{enumerate}[(1)]
\item $\mathcal K=(\mathbf K,\preccurlyeq)$ is an AEC;
\item $\mathbf K$ is closed under $R$-submodules, i.e.,  if $A\leq B$ and $B\in\mathbf K$, then $A\in\mathbf K$.
\end{enumerate}
Then $\mathcal K$ is stable.
\end{theorem}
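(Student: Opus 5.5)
The plan is to prove $\chi$-stability for every cardinal $\chi\geq\mathrm{LS}(\mathcal K)+2^{2^\lambda}$ with $\chi^{2^\lambda}=\chi$. This class of cardinals is unbounded, for instance $\chi=\mu^{2^\lambda}$ for large $\mu$, so stability follows. Fix an E-nice $\Sigma\subseteq\Lambda^{\lambda^+,\lambda^+}_1$ with $\preccurlyeq=\leq_\Sigma$. By Remark~\ref{card-sigma}, $|\Sigma|\leq 2^\lambda$. The strategy is to show that every type over $A$ is witnessed inside a submodule of the form $\langle AC\rangle_B$ with $|C|\leq 2^\lambda$, and then to count such configurations.

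\emph{Claim 1 (local character).} Let $A\preccurlyeq B$ in $\mathbf K$ and $b\in B$. Then there is $C\leq B$ with $b\in C$, $|C|\leq 2^\lambda$, and $\langle AC\rangle_B\preccurlyeq B$.

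To build $C$, construct an increasing chain $C_0\subseteq C_1\subseteq\cdots$ of submodules of $B$, starting from $C_0=Rb$. At stage $n+1$, consider each $\varphi(\bar x)\in\Sigma$ and each $\bar c\in C_n^{|\bar x|}$. If there is some $\bar a\in A^{|\bar x|}$ with $B\models\varphi^*(\bar a,\bar c)$, choose one such $\bar a_0$. Fix a witnessing tuple $\bar v\in B^\lambda$ for $B\models\varphi(\bar a_0+\bar c)$ and add the entries of $\bar a_0$ and of $\bar v$ to $C_{n+1}$. Finally let $C_{n+1}$ be the submodule these elements generate. Each stage adds at most $|\Sigma|\cdot|C_n|^{<\omega}\cdot\lambda\leq 2^\lambda$ elements, so $C=\bigcup_n C_n$ has size at most $2^\lambda$.

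It remains to check that $\langle AC\rangle_B\leq_\Sigma B$. Take $\varphi\in\Sigma$ and $\bar d=\bar a+\bar c\in\langle AC\rangle_B$ with $B\models\varphi(\bar d)$; then $\bar c\in C_n$ for some $n$. Let $\bar a_0$ be the tuple chosen at stage $n+1$. By Proposition~\ref{preserve}(2), $\varphi(B)$ is a subgroup, so $B\models\varphi(\bar a-\bar a_0)$. Since $A\leq_\Sigma B$, the formula $\varphi(\bar a-\bar a_0)$ already holds in $A$, with witnesses in $A$. The witnesses for $\bar a_0+\bar c$ lie in $C_{n+1}$. Each $\varphi\in\Sigma$ is a single existential block of linear equations, so adding the two witness tuples gives a witness for $\bar d$ lying in $\langle AC\rangle_B$. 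Hence $\langle AC\rangle_B\leq_\Sigma B$, that is, $\langle AC\rangle_B\preccurlyeq B$. This is exactly the situation ruled out by Remark~\ref{star-rmk}. We also need $\langle AC\rangle_B\in\mathbf K$, which holds by hypothesis~(2).

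\emph{Claim 2 (types are determined by small data).} By Coherence, $A\preccurlyeq\langle AC\rangle_B\preccurlyeq B$. The inclusion $\langle AC\rangle_B\preccurlyeq B$ gives
\[
\gtp_{\mathcal K}(b/A;B)=\gtp_{\mathcal K}(b/A;\langle AC\rangle_B).
\]
Hence the type is determined by the isomorphism type over $A$ of the pair $(\langle AC\rangle_B,b)$. This structure is the quotient $(A\oplus C)/\{(x,-x)\mid x\in A\cap C\}$, so it is determined by three pieces of data:
\begin{itemize}
\item the isomorphism type of $(C,b)$, with at most $2^{2^\lambda}$ choices since $|C|\leq 2^\lambda$ and $\operatorname{card}(R)\leq\lambda$;
\item the submodule $A\cap C$ of $C$, with at most $2^{2^\lambda}$ choices;
\item the embedding $A\cap C\to A$, with at most $|A|^{2^\lambda}$ choices.
\end{itemize}
If two triples give isomorphic data over $A$, the isomorphism is a strong embedding over $A$ sending $b_1$ to $b_2$, so the types coincide. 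Therefore $|\gS_{\mathcal K}(A)|\leq|A|^{2^\lambda}\cdot 2^{2^\lambda}\leq\chi$ whenever $|A|=\chi$ and $\chi=\chi^{2^\lambda}$.

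The main obstacle I expect is Claim 1. The difficulty is that the witnesses for formulas over $\langle AC\rangle_B$ involve parameters from the large module $A$, so there are too many of them to add one by one. The plan handles this by choosing one representative $\bar a_0$ per pair $(\varphi,\bar c)$, using the subgroup structure of $\varphi(B)$ together with $A\preccurlyeq B$ to absorb all other $\bar a$. This step relies on E-nice formulas being one-block existentials, so that witness tuples add coordinatewise.
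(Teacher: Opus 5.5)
Your proof is correct. Its overall architecture matches the paper's:
\begin{itemize}
\item a local-character claim producing a small $C\ni b$ with $\langle AC\rangle_B\preccurlyeq B$;
\item the identification $\langle AC\rangle_B\cong A\oplus_{A\cap C}C$, so an isomorphism $C\cong C'$ over $A\cap C$ sending $b$ to $b'$ yields equal types;
\item counting the data $(C,b,A\cap C)$ when $\chi^{2^\lambda}=\chi$.
\end{itemize}

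The genuine difference is in the proof of Claim 1. The paper argues by contradiction in several steps:
\begin{itemize}
\item It first shows $\mathrm{LS}(\mathcal K)\leq 2^\lambda$ by a witness-closing argument.
\item It then uses the L\"owenheim--Skolem--Tarski axiom to build an increasing continuous chain $(C_i)_{i<(2^\lambda)^+}$ of $\preccurlyeq$-submodels of $B$ of size $2^\lambda$.
\item At each stage it uses Remark~\ref{star-rmk} to get a failing instance $\varphi_i^*(\bar a_i,\bar c_i)$ and adds its witnesses to $C_{i+1}$.
\item Finally, Fodor's lemma and pigeonhole give $i<j$ with the same pair $(\varphi,\bar c)$.
\end{itemize}
The contradiction then comes from exactly your absorption trick: $B\models\varphi(\bar a_j-\bar a_i)$, hence $A\models\varphi(\bar a_j-\bar a_i)$ since $A\leq_\Sigma B$, and witness tuples add because each E-nice formula is a single existential block of homogeneous linear equations.

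You make that trick constructive. Fixing one representative $\bar a_0$ for each pair $(\varphi,\bar c)$ in an $\omega$-step closure already forces $\langle AC\rangle_B\leq_\Sigma B$. This is shorter and more elementary: no stationary sets, no use of the L\"owenheim--Skolem--Tarski axiom inside the construction, and no preliminary bound on $\mathrm{LS}(\mathcal K)$ (you simply take $\chi\geq\mathrm{LS}(\mathcal K)$).

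What you give up is that your $C$ is only a submodule of $B$, whereas the paper's is a $\preccurlyeq$-submodel. But the type-identification step never uses $C\preccurlyeq B$. It only needs $\langle AC\rangle_B\in\mathbf K$ (closure under submodules) and $\langle AC\rangle_B\preccurlyeq B$, both of which you have. Adding the entries of $\bar a_0$ to $C_{n+1}$ is unnecessary, since $\bar a_0+\bar c$ already lies in $\langle AC\rangle_B$, but it is harmless for the counting.
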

\begin{proof}
Let $\Sigma\subseteq\Lambda^{\lambda^+,\lambda^+}_1$ E-nice for $\lambda$ a cardinal such that $\preccurlyeq=\leq_\Sigma$. A standard witness closing construction, using  that $|\Sigma|\leq 2^\lambda$ from Remark~\ref{card-sigma} and the closure of $\mathbf K$ under submodules, gives $\mathrm{LS}(\mathcal K)\leq 2^\lambda$.

\smallskip \noindent We prove the theorem through three claims.

\smallskip \noindent \underline{Claim 1.} If $A\preccurlyeq B$ and $b\in B$, where $A,B\in\mathbf K$, then there is $C\in\mathbf K$ such that
\begin{enumerate}[(a)]
\item $C\preccurlyeq B$ and $|C|\leq 2^\lambda$;
\item $b\in C$;
\item $\langle AC\rangle_B\preccurlyeq B$.
\end{enumerate}

\smallskip \noindent \emph{Proof of Claim 1.} Suppose, toward a contradiction, that no such $C$ exists. In particular, $|B|>2^\lambda$, since otherwise one could take $C=B$.

\smallskip \noindent We recursively construct an increasing continuous $\preccurlyeq$-chain $(C_i)_{i<(2^\lambda)^+}$ in $\mathbf{K}$, formulas $(\varphi_i(\bar x_i))_{i <(2^\lambda)^+}$ in $\Sigma$, and tuples $((\bar{a}_i, \bar{c}_i))_{i < (2^\lambda)^+}$, so that for every $i<(2^\lambda)^+$:
\begin{enumerate}[(1)]
\item $|C_i|=2^\lambda$;
\item $b\in C_0$ and  $C_i\preccurlyeq B$;
\item  $\bar a_i\in A^{|\bar x_i|}$ and $\bar c_i\in C_i^{|\bar x_i|}$;
\item $\langle AC_{i+1}\rangle_B\models\varphi_i^*(\bar a_i,\bar c_i)$ and $\langle AC_i\rangle_B\not\models\varphi_i^*(\bar a_i,\bar c_i)$.
\end{enumerate}

\smallskip \noindent The construction follows from the L\"owenheim--Skolem--Tarski axiom and Remark~\ref{star-rmk}. At a successor step, we make sure  $C_{i+1}$ has the at most $\lambda$ witnesses for the existential quantifiers in $\varphi_i^*(\bar x_i',\bar x_i'')$. It is in this construction where we use in a key way the assumption that $\mathbf{K}$ is closed under submodules.

\smallskip \noindent Let $X_1$ be the set of limit ordinals below $(2^\lambda)^+$. For each $i\in X_1$, let $\alpha_i<i$ be least such that $\bar c_i\in C_{\alpha_i}^{|\bar x_i|}$; such an $\alpha_i$ exists because $\bar c_i$ is finite and the chain is continuous. By Fodor's lemma, there are a stationary set $X_2\subseteq X_1$ and an ordinal $\alpha$ such that $\alpha_i=\alpha$ for every $i\in X_2$. 

\smallskip \noindent  Let $\Phi: X_2 \to \Sigma \times C_{\alpha}^{<\omega}$ given by $\Phi(i) = (\varphi_i(\bar{x}_i), \bar{c}_i)$. The map has range of cardinality at most $2^\lambda$ by Remark \ref{card-sigma}. Hence, it follows from the pigeonhole principle that there are a set $X\subseteq X_2$ of cardinality $(2^\lambda)^+$, a formula $\varphi(\bar x)\in\Sigma$, and a tuple $\bar c\in C_\alpha^{|\bar x|}$ such that
$\varphi_i(\bar x_i)=\varphi(\bar x)$ and $\bar c_i=\bar c$
for every $i\in X$.

\smallskip \noindent Choose $i<j$ both in $X$. By Condition~(4),
\[
\langle AC_{i+1}\rangle_B\models\varphi^*(\bar a_i,\bar c)
\quad\text{and}\quad
\langle AC_{j+1}\rangle_B\models\varphi^*(\bar a_j,\bar c).
\]
Since $i<j$, Proposition~\ref{preserve}(1) gives
$\langle AC_{j+1}\rangle_B\models\varphi^*(\bar a_i,\bar c).$
By Proposition~\ref{preserve}(2),
$\langle AC_{j+1}\rangle_B\models \varphi^*(\bar{a}_j - \bar{a}_i, \bar{c}-\bar{c})= \varphi^*(\bar{a}_j - \bar{a}_i, \bar{0}),$
and hence $B\models\varphi^*(\bar a_j-\bar a_i,\bar 0)$. Since $A\preccurlyeq B$, Proposition~\ref{easy-nice} yields
$A\models\varphi^*(\bar a_j-\bar a_i,\bar 0).$
Combining this with $\langle AC_{i+1}\rangle_B\models\varphi^*(\bar a_i,\bar c)$ and applying Proposition~\ref{preserve}(1) and~(2), we obtain
$\langle AC_{i+1}\rangle_B\models\varphi^*(\bar a_j,\bar c).$
Because $i+1<j$ as $j$ is a limit ordinal, Proposition~\ref{preserve}(1) implies
$\langle AC_j\rangle_B\models\varphi^*(\bar a_j,\bar c_j),$
contradicting Condition~(5). This proves Claim~1.

\smallskip \noindent \underline{Claim 2.} Suppose that $A\preccurlyeq B$ and $A\preccurlyeq B'$, with $b\in B$ and $b'\in B'$. Let $C\preccurlyeq B$ and $C'\preccurlyeq B'$ satisfy Conditions~(a)--(c) of Claim~1 for $b \in B$ and $b' \in B'$, respectively. If $A\cap C=A\cap C'$ and there is an isomorphism $f:C\to C'$ such that
\[
f\restriction(A\cap C)=\operatorname{id}_{A\cap C}
\quad\text{and}\quad
f(b)=b',
\]
then
$\gtp_{\mathcal K}(b/A;B)=\gtp_{\mathcal K}(b'/A;B').$

\smallskip \noindent \emph{Proof of Claim 2.} Let $P_C:= A \oplus_{A \cap C} C$ be the pusout and $t_C: P_C \to \langle AC \rangle_B$ be the map given by the universal mapping property, i.e., $t_C[(a,c)]_{A \cap C} = a + c$.

The map $t_C$ is an epimorphism, we show it is a monomorphism. If $t_C([(a,c)]_{A\cap C})=0$, then $a=-c\in A\cap C$, and therefore $[(a,c)]_{A\cap C}=0$. Thus $t_C$ is an isomorphism. Its inverse is $g_C:\langle AC\rangle_B\longrightarrow P_C$ given by $g_C(a+c)=[(a,c)]_{A\cap C}.$

Similarly, the map $t_{C'}:A\oplus_{A\cap C'}C'\longrightarrow\langle AC'\rangle_{B'}$ given by $t_{C'}([(a,c')]_{A\cap C'})=a+c'$,
is an isomorphism.

\smallskip \noindent Let $h:A\oplus_{A\cap C}C\longrightarrow A\oplus_{A\cap C'}C'$ given by
$h([(a,c)]_{A\cap C})=[(a,f(c))]_{A\cap C'}.$ This is well defined because $A\cap C=A\cap C'$ and $f$ is the identity on this common submodule. Its inverse is induced by $f^{-1}$, so $h$ is an isomorphism. Consequently,
\[
s:=t_{C'}\circ h\circ g_C:
\langle AC\rangle_B\longrightarrow\langle AC'\rangle_{B'}
\]
is an isomorphism satisfying $s\restriction A=\operatorname{id}_A$ and $s(b)=b'$.

\smallskip \noindent Since $\mathbf K$ is closed under submodules, both $\langle AC\rangle_B, \langle AC'\rangle_{B'} \in \mathbf K$. So $(b, A, \langle AC \rangle_B) E^{\mrm{at}}_\mathcal{K} (b', A, \langle AC' \rangle_{B'})$.

 Moreover, Condition~(c) of Claim~1 gives
$\langle AC\rangle_B\preccurlyeq B$ and $\langle AC'\rangle_{B'}\preccurlyeq B'.$
Thus
\[
\begin{aligned}
(b,A,B)
&\ E^{\mathrm{at}}_{\mathcal K}\ (b,A,\langle AC\rangle_B)\\
&\ E^{\mathrm{at}}_{\mathcal K}\ (b',A,\langle AC'\rangle_{B'})\\
&\ E^{\mathrm{at}}_{\mathcal K}\ (b',A,B'),
\end{aligned}
\]
which proves Claim~2.

\smallskip \noindent \underline{Claim 3.} If $\theta^{2^\lambda}=\theta$, then $\mathcal K$ is $\theta$-stable.

\smallskip \noindent \emph{Proof of Claim 3.} Let $A\in\mathbf K_\theta$. Suppose, toward a contradiction, that
$\{p_i=\gtp_{\mathcal K}(b_i/A;B_i)\mid i<\theta^+\}$
is a family of distinct types in $\gS_{\mathcal{K}}(A)$. For each $i<\theta^+$, choose $C_i\preccurlyeq B_i$ satisfying Conditions~(a)--(c) of Claim~1 for $b_i\in B_i$.

\smallskip \noindent Since there are at most $\theta^{2^\lambda}=\theta$ subsets of $A$ of cardinality at most $2^\lambda$, by the pigeonhole principle there are a set $S\subseteq\theta^+$ of cardinality $\theta^+$ and a set $D\subseteq A$, with $|D|\leq2^\lambda$, such that
$A\cap C_i=D$
for every $i\in S$. Fix an enumeration $D=\{d_\alpha\mid\alpha<\mu\}$, where $\mu\leq2^\lambda$.

\smallskip \noindent Expand the language of $R$-modules by constants $k$ and $(e_\alpha)_{\alpha<\mu}$. Let $\Delta$ be the set of isomorphism types of structures
$(N,k^N,(e_\alpha^N)_{\alpha<\mu}),$
where $N\in\mathbf K$, $|N|\leq2^\lambda$, $k^N\in N$, and $e_\alpha^N\in N$ for every $\alpha<\mu$. Then
$|\Delta|\leq2^{2^\lambda}\leq\theta^{2^\lambda}=\theta.$
For $i\in S$, assign to $i$ the isomorphism type of
$(C_i,b_i,(d_\alpha)_{\alpha<\mu}).$
By the pigeonhole principle, there are distinct $i,j\in S$ and an isomorphism
\[
f:(C_i,b_i,(d_\alpha)_{\alpha<\mu})
\cong
(C_j,b_j,(d_\alpha)_{\alpha<\mu}).
\]
Thus $f: C_i \cong C_j$, $f(b_i)=b_j$ and $f$ fixes $D=A\cap C_i=A\cap C_j$ pointwise. Claim~2 gives $p_i=p_j$, a contradiction. This proves Claim~3.

\smallskip \noindent For every cardinal $\mu\geq2^\lambda$, the cardinal $\theta=2^\mu$ satisfies $\theta^{2^\lambda}=\theta$. Hence Claim~3 yields stability in unboundedly many cardinals, and $\mathcal K$ is stable.
\end{proof}

The following consequence is immediate from Lemma~\ref{first_nice_lemma} and Theorem~\ref{sta-nice}.

\begin{corollary}\label{many-sta}

If
$\, \preccurlyeq\in
\{\leq^{(\kappa,\alpha)}_{\mathrm{pp}}\mid \kappa\in\mathrm{Card},\ \alpha\in\mathrm{Ord}\}
\cup\{\leq^{(\kappa,\infty)}_{\mathrm{pp}}\mid \kappa\in\mathrm{Card}\}
\cup\{\pleqq\},$
then $(\rmod,\preccurlyeq)$ is stable.
\end{corollary}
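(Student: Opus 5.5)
The corollary is an application of Theorem~\ref{sta-nice} with $\mathbf K=\rmod$, so the plan is to check its hypotheses one at a time for each relation in the list.

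First, E-niceness. Lemma~\ref{first_nice_lemma} already shows that each relation in the three families is E-nice with respect to some $\Sigma$. For $\leq^{(\kappa,\alpha)}_{\mathrm{pp}}$ this goes through Proposition~\ref{equiv-exist}. For $\leq^{(\kappa,\infty)}_{\mathrm{pp}}$ one first reduces to a fixed level $\alpha$ via Proposition~\ref{equiv-order}. For $\pleqq$ one uses Remark~\ref{rmk-compare}. There is one point to watch, since Definition~\ref{pre_def_nice} fixes a single $\lambda$. Proposition~\ref{equiv-exist} produces formulas with $\lambda$ possibly varying, bounded by $\kappa+\operatorname{card}(R)+\aleph_0$. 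I would pad each conjunction with trivial equations $0=0$ and dummy variables, so that every formula has exactly $\lambda=\kappa+\operatorname{card}(R)+\aleph_0$ conjuncts and existential variables. This keeps everything inside $\Lambda^{\lambda^+,\lambda^+}_1$.

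Second, hypothesis (1) of Theorem~\ref{sta-nice}, that $(\rmod,\preccurlyeq)$ is an AEC. For $\leq^{(\kappa,\alpha)}_{\mathrm{pp}}$ and $\leq^{(\kappa,\infty)}_{\mathrm{pp}}$ this is Lemma~\ref{prop_AEC_general_infinitary}. For $\pleqq$ it is the lemma following Remark~\ref{rmk-compare}.

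Third, hypothesis (2), that $\mathbf K$ is closed under submodules. This is trivial because $\mathbf K=\rmod$.

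With all hypotheses verified, Theorem~\ref{sta-nice} gives stability. The only step needing any care is the uniformization of $\lambda$ in the first step, and it is routine padding.
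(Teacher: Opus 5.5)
Your proposal is correct and is essentially the paper's argument: the paper derives the corollary directly from Lemma~\ref{first_nice_lemma} (E-niceness) and Theorem~\ref{sta-nice} with $\mathbf K=\rmod$, and the AEC hypothesis comes from Lemma~\ref{prop_AEC_general_infinitary} and its analogue for $\pleqq$, exactly as you say. Your padding step, which uses dummy existential variables and trivial equations $0=0$ to get a single $\lambda=\kappa+\operatorname{card}(R)+\aleph_0$, is valid; it makes explicit a uniformity that the paper's proof of Lemma~\ref{first_nice_lemma} leaves implicit.
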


We can strengthen Corollary~\ref{many-sta} as follows.

\begin{corollary}\label{more-sta}
Let $\Sigma\subseteq\Lambda^{\infty,\aleph_0}_\infty$ be a set. If $\leq_\Sigma\in\mathscr{L}_{R}$, then $(\rmod,\leq_\Sigma)$ is stable.
\end{corollary}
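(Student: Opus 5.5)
The plan is to reduce Corollary~\ref{more-sta} to Theorem~\ref{sta-nice} with $\mathbf K=\rmod$. Condition~(2) of that theorem (closure under submodules) is trivial for $\rmod$, and condition~(1) holds because $\leq_\Sigma\in\mathscr{L}_{R}$. So it remains to show that $\leq_\Sigma$ is E-nice, i.e.\ $\leq_\Sigma=\leq_{\Sigma'}$ for some E-nice $\Sigma'\subseteq\Lambda^{\lambda^+,\lambda^+}_1$ with $\lambda\geq\operatorname{card}(R)$.

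First, since $\Sigma$ is a set, there is a cardinal $\kappa$ and an ordinal $\alpha$ with $\Sigma\subseteq\Lambda^{\kappa,\aleph_0}_\alpha$. Take $\kappa$ regular and at least $\operatorname{card}(R)^+$, and $\alpha$ to be the supremum of the ranks of the formulas in $\Sigma$ plus one. By Proposition~\ref{equiv-exist}, each $\varphi(\bar x)\in\Sigma$ is equivalent, relative to the theory of $R$-modules, to a formula
\[
\exists\bar v\bigwedge_{i<\lambda_\varphi}\delta_i(\bar v,\bar x)=0
\]
with $\lambda_\varphi\leq\kappa$. Set $\lambda=\kappa+\operatorname{card}(R)$.

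Next, pad each such formula to have exactly $\lambda$ existential variables and $\lambda$ conjuncts. Add dummy variables $v_\ell$ for $\lambda_\varphi\leq\ell<\lambda$, which do not occur in any conjunct, and repeat the trivial equation $0=0$ (or repeat an existing conjunct). These changes do not alter the meaning of the formula in any module, since the dummy variables can be witnessed by $0$. Let $\Sigma'$ be the resulting set of padded formulas. Then $\Sigma'\subseteq\Lambda^{\lambda^+,\lambda^+}_1$ has exactly the shape required in Definition~\ref{pre_def_nice}, so it is E-nice. Because each formula of $\Sigma'$ is equivalent to the corresponding formula of $\Sigma$ in every $R$-module, we get $\leq_{\Sigma'}=\leq_\Sigma$ directly from the definition of $\leq_\Sigma$.

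Hence $\leq_\Sigma$ is E-nice with respect to $\Sigma'$, and Theorem~\ref{sta-nice} gives that $(\rmod,\leq_\Sigma)$ is stable.

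The only point needing care is the padding step. Definition~\ref{pre_def_nice} fixes the index set to be exactly $\lambda$ for all formulas, while Proposition~\ref{equiv-exist} yields varying $\lambda_\varphi\leq\kappa$. The dummy-variable and trivial-conjunct argument resolves this. I do not expect any further obstacle: the hypothesis $\leq_\Sigma\in\mathscr{L}_{R}$ is exactly what replaces the AEC verification that Corollary~\ref{many-sta} obtained from Lemma~\ref{prop_AEC_general_infinitary}.
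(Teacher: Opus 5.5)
Your proposal is correct and follows the same route as the paper: use Proposition~\ref{equiv-exist} to replace $\Sigma$ by an E-nice set $\Sigma'$ with $\leq_\Sigma=\leq_{\Sigma'}$, then apply Theorem~\ref{sta-nice} with $\mathbf K=\rmod$. Your padding step adds dummy variables and trivial conjuncts so that all formulas share a single index $\lambda$, which uses that $\Sigma$ is a set; it makes explicit a uniformization that the paper's one-line proof leaves implicit.
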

\begin{proof}
By Proposition~\ref{equiv-exist}, there is an E-nice set $\Sigma'$ such that $\leq_\Sigma=\leq_{\Sigma'}$. The conclusion now follows from Theorem~\ref{sta-nice}.
\end{proof}

\begin{corollary}\label{sta-first}
If $\Sigma$ is a set of first-order $\mrm{pp}$-formulas, then $(\rmod,\leq_\Sigma)$ is a stable AEC.
\end{corollary}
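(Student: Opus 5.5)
The plan is to derive Corollary \ref{sta-first} from the two facts already established, namely that $\leq_\Sigma$ is an AEC and that E-nice relations satisfying the hypotheses of Theorem \ref{sta-nice} give stable classes.

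\textbf{Step 1: $(\rmod,\leq_\Sigma)$ is an AEC.} By Proposition \ref{AEC-plea}, $\leq_\Sigma\in\mathscr{L}^1_R\subseteq\mathscr{L}_R$.

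\textbf{Step 2: put $\Sigma$ in infinitary form.} Each first-order pp-formula is, by definition, equivalent to a formula in $\Lambda^{\aleph_0,\aleph_0}_\omega\subseteq\Lambda^{\infty,\aleph_0}_\infty$. Replace each $\varphi\in\Sigma$ by such an equivalent formula, obtaining a set $\Sigma_0\subseteq\Lambda^{\infty,\aleph_0}_\infty$. Since equivalent formulas define the same relation, $\leq_\Sigma=\leq_{\Sigma_0}$.

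\textbf{Step 3: conclude stability.} Corollary \ref{more-sta}, applied to $\Sigma_0$, gives that $(\rmod,\leq_\Sigma)$ is stable.

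If one prefers to bypass Corollary \ref{more-sta} and use Theorem \ref{sta-nice} directly, apply Proposition \ref{equiv-exist} to each formula of $\Sigma_0$. It yields an equivalent formula of the form $\exists\bar v\bigwedge_{i<\lambda'}\delta_i=0$ with $\lambda'\leq\aleph_0$ finite or countable. Fix $\lambda=\operatorname{card}(R)+\aleph_0$ and pad each formula with dummy variables $v_\ell$ and trivial equations $0=0$, so that it is indexed exactly by $\lambda$. Since $\lambda\geq\aleph_0$, each padded formula lies in $\Lambda^{\lambda^+,\lambda^+}_1$. This gives an E-nice $\Sigma'$ with $\leq_\Sigma=\leq_{\Sigma'}$. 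The hypotheses of Theorem \ref{sta-nice} then hold with $\mathbf K=\rmod$: condition (1) is Step 1, and condition (2), closure under submodules, is trivial.

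The only point I expect to need care is this padding step, i.e.\ checking that the formulas have exactly the syntactic shape required by Definition \ref{pre_def_nice} for the chosen $\lambda$. It is routine, since adding conjuncts $0=0$ and unused existential variables does not change the formula up to equivalence.
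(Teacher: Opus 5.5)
Your proof is correct and is essentially the paper's argument, which derives the corollary directly from Proposition~\ref{AEC-plea} and Corollary~\ref{more-sta}. Your Step~2 just makes explicit the replacement of each first-order $\mrm{pp}$-formula by an equivalent formula in $\Lambda^{\aleph_0,\aleph_0}_\omega$, which the paper leaves implicit; your optional padding route through Theorem~\ref{sta-nice} is also sound.
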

\begin{proof}
This follows from Proposition~\ref{AEC-plea} and Corollary~\ref{more-sta}.
\end{proof}

Thus Corollary~\ref{sta-first} partially answers Question~\ref{q-first} by extending Lemma~\ref{stable-(I)}.

\begin{remark}\label{gen-sta-2}
Theorem~\ref{sta-nice} also yields stability for other interesting AECs of modules. For example, it applies to the class of torsion abelian groups with pure embeddings and to the class of torsion-free abelian groups with pure embeddings, because both underlying classes are closed under submodules. These results were first obtained in \cite[Lemma~3.5]{maztor} and \cite[Theorem~0.3]{bet} respectively.
\end{remark}

\section{The lattice over $\mathbb{Z}$}

In this section, we study the lattice $\mathscr{L}_{\mathbb{Z}}$. The first subsection concerns $\mathscr{L}^{1}_{\mathbb{Z}}$; the second establishes that $\mathscr{L}_{\mathbb{Z}}$ is a proper class; and the third investigates the failure of amalgamation.

\subsection{\texorpdfstring{The sublattice $\mathscr{L}^{1}_{\mathbb{Z}}$}{The sublattice L1 over Z}}\label{5.1}

We begin with two elementary order-theoretic observations.

\begin{lemma}\label{antichain}
There is an uncountable antichain in $\mathscr{L}^{1}_{\mathbb{Z}}$.
\end{lemma}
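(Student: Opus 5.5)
The plan is to index the antichain by infinite sets of primes, using divisibility relations. For a set $P$ of primes, let $\Sigma_P=\{\exists y\,(py=x)\mid p\in P\}$ and write $\preccurlyeq_P:=\leq_{\Sigma_P}$. Each $\Sigma_P$ is pleasant in the sense of Definition~\ref{def_pleasant}: condition (2) holds trivially, because $K$ is the $1\times 1$ identity matrix. By Proposition~\ref{AEC-plea} and the remark following Example~\ref{points-(I)}, every $\preccurlyeq_P$ therefore lies in $\mathscr{L}^1_{\mathbb Z}$. There are $2^{\aleph_0}$ subsets of the primes. It will suffice to choose an uncountable family $\mathcal F$ of sets of primes such that no member is contained in another. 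One option is to identify the primes with the nodes of the binary tree $2^{<\omega}$ and let $\mathcal F$ consist of the branches. Distinct branches are $\subseteq$-incomparable, since each is infinite and two of them share only finitely many nodes. Then I show that $P\mapsto\preccurlyeq_P$ sends incomparable sets to incomparable relations.

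The key step is: if $p\in P\setminus Q$, then $\preccurlyeq_Q\not\subseteq\preccurlyeq_P$, i.e.\ some pair satisfies $\preccurlyeq_Q$ but not $\preccurlyeq_P$. Take the pair $p\mathbb Z\leq\mathbb Z$. In the element $p\in p\mathbb Z$, the formula $\exists y\,(py=x)$ holds in $\mathbb Z$ but fails in $p\mathbb Z$, since $1\notin p\mathbb Z$. So $p\mathbb Z\not\preccurlyeq_P\mathbb Z$. For $q\in Q$ we have $q\neq p$, and we must check that $q\mathbb Z\cap p\mathbb Z=q(p\mathbb Z)$. This holds because $pqz\cdot$ arithmetic shows: if $pk=qm$ then $q\mid k$, so $pk\in q(p\mathbb Z)$. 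Hence $p\mathbb Z\preccurlyeq_Q\mathbb Z$.

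Applying this in both directions to distinct $P,Q\in\mathcal F$ shows that $\preccurlyeq_P$ and $\preccurlyeq_Q$ are incomparable under inclusion, and hence under reverse inclusion. This gives an antichain of size $2^{\aleph_0}$.

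The only delicate point is making sure the relations really are distinct elements of the lattice, rather than merely distinct definitions. The explicit witness $p\mathbb Z\leq\mathbb Z$ handles this. I do not expect any further obstacle.
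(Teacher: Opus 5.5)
Your proof is correct and follows essentially the same route as the paper. The paper uses the same relations $\leq_U$, given by the pleasant sets $\Sigma_U=\{\exists y\,(py=x)\mid p\in U\}$, indexed by an almost disjoint family of $2^{\aleph_0}$ infinite sets of primes; your branches of $2^{<\omega}$ are the standard such family. The paper separates $\leq_U$ from $\leq_V$ with the pair $\mathbb Z\leq\langle\frac1p\rangle$, which is isomorphic (via multiplication by $p$) to your pair $p\mathbb Z\leq\mathbb Z$. The only blemish is cosmetic: the garbled phrase ``$pqz\cdot$ arithmetic'' should simply say that since $p\neq q$ are primes, $pk=qm$ forces $q\mid k$.
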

\begin{proof}
Let $\mathcal A$ be an almost disjoint family of infinite subsets of the set of prime numbers such that $|\mathcal A|=2^{\aleph_0}$. Thus every $U\in\mathcal A$ is countably infinite, while $U\cap V$ is finite whenever $U,V\in\mathcal A$ are distinct. Such a family exists by for example \cite[Lemma~9.21]{jech}.

\smallskip \noindent 
For  $U \in \mathcal{A}$, let $A \leq_U B$ if and only if $A \leq B$ and for every $p \in U$, $pB \cap A = pA$. For every $U \in \mathcal{A}$, we have that $\leq_U$ is pleasant  with respect to $\Sigma_U =\{ \exists y (py = x) \, \mid \,  p \in U \}$  . Hence  $\leq_U \in  \mathscr{L}^{1}_{\mathbb{Z}}$ and $(\zmod,\leq_U)$ is $(<\aleph_0)$-tame and stable.

\smallskip \noindent We show that $\{\leq_U\mid U\in\mathcal A\}$ is an antichain. Let $U,V\in\mathcal A$ be distinct. Since $U$ and $V$ are infinite and $U\cap V$ is finite, neither is contained in the other. We prove that $\leq_V\not\subseteq\leq_U$; the reverse non-inclusion is analogous.

\smallskip \noindent Choose $p\in U\setminus V$, and let $G=\mathbb Z$ and $H= \langle \frac{1}{p} \rangle$. Clearly $G\not\leq_U H$. To see that $G\leq_V H$, fix $q\in V$ and $n\in qH\cap G$. Then there is $a\in\mathbb Z$  such that $
n=\frac{qa}{p}$,
so $p n=qa$. Since $p\neq q$, it follows that $q\mid n$ in $\mathbb Z$. Thus $n\in q\mathbb Z=qG$, and therefore $G\leq_V H$.
\end{proof}

The following natural question remains open.

\begin{question}
Is there an antichain of cardinality $2^{2^{\aleph_0}}$ in $\mathscr{L}^{1}_{\mathbb{Z}}$? This is the largest possible cardinality, since $
|\mathscr{L}^{1}_{\mathbb{Z}}|\leq 2^{2^{\aleph_0}}$
by Proposition~\ref{basic_L1}.
\end{question}

\begin{proposition}
There is a countable strictly increasing chain in $\mathscr{L}^{1}_{\mathbb{Z}}$.
\end{proposition}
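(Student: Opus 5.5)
Enumerate the primes as $p_0<p_1<p_2<\cdots$. For $n<\omega$ let $U_n=\{p_0,\ldots,p_{n-1}\}$, and define $\leq_{U_n}$ exactly as in the proof of Lemma~\ref{antichain}:
\[
A\leq_{U_n}B \iff A\leq B \text{ and } pB\cap A=pA \text{ for every } p\in U_n .
\]
This relation is pleasant with respect to $\Sigma_{U_n}=\{\exists y\,(py=x)\mid p\in U_n\}$. Condition~(2) of Definition~\ref{def_pleasant} holds trivially, since here $K$ is the $1\times 1$ identity matrix. Hence $\leq_{U_n}\in\mathscr{L}^{1}_{\mathbb Z}$ by Proposition~\ref{AEC-plea}. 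The chain I propose is $(\leq_{U_n})_{n<\omega}$.

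The chain is increasing for a simple reason. Since $U_n\subseteq U_{n+1}$, we have $\Sigma_{U_n}\subseteq\Sigma_{U_{n+1}}$, and therefore $\leq_{U_{n+1}}\subseteq\leq_{U_n}$. Because the lattice is ordered by reverse inclusion, this says that $\leq_{U_n}$ lies below $\leq_{U_{n+1}}$.

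It remains to show the inclusions are strict, and this is the only step that needs a computation. Take $p=p_n\in U_{n+1}\setminus U_n$, $G=\mathbb Z$, and $H=\langle \tfrac1p\rangle\leq\mathbb Q$. Then $1\in pH\cap G$ but $1\notin pG$, so $G\not\leq_{U_{n+1}}H$. On the other hand, fix $q\in U_n$, so $q\neq p$, and let $m\in qH\cap G$. Then $m=qa/p$ for some $a\in\mathbb Z$, so $pm=qa$. Since $p$ and $q$ are coprime, $q\mid m$, hence $m\in qG$. This is exactly the computation already carried out in Lemma~\ref{antichain}, and it gives $G\leq_{U_n}H$. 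Thus $\leq_{U_{n+1}}\subsetneq\leq_{U_n}$ for every $n$, and the chain is strictly increasing.

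I do not expect any real obstacle, since everything reduces to earlier results. One could also extend the chain by its limit $\leq_{\mathbb P}$, where $\mathbb P$ is the set of all primes. This relation is again pleasant and equals $\leq_{\mathrm{RD}}$ over $\mathbb Z$, but it is not needed for the statement.
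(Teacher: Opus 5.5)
Your proof is correct. It uses the same mechanism as the paper: pleasant relations defined by divisibility formulas, placed in $\mathscr{L}^{1}_{\mathbb Z}$ by Proposition~\ref{AEC-plea}. What differs is the chain.

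\textbf{Comparison with the paper.} The paper fixes a prime $p$ and increases the depth of divisibility, using $\leq_{p,n}$, pleasant with respect to $\{\exists y\,(p^ky=x)\mid 0\leq k\leq n\}$. You keep depth one and enlarge the finite set of primes instead.

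The paper only asserts that its chain is strict, and verifying this needs torsion witnesses. If $B$ is torsion-free, then $pB\cap A=pA$ already forces $p^kB\cap A=p^kA$ for every $k$. So for $n\geq 1$ one must use something like $B=\mathbb Z/p^n\oplus\mathbb Z/p^{n+2}$ with $A=\langle(1,p)\rangle$. This $A$ is $\leq_{p,n}$ in $B$ but not $\leq_{p,n+1}$, as witnessed by $p^n(1,p)=(0,p^{n+1})$.

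Your chain is instead separated by the torsion-free pair $\mathbb Z\leq\langle\frac1p\rangle$ from Lemma~\ref{antichain}, and you actually carry out the check.

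Your indexing also scales. The same computation shows that $U\mapsto\leq_U$, for arbitrary sets $U$ of primes, is an order embedding of $(\mathcal P(\mathbb P),\subseteq)$ into $\mathscr{L}^{1}_{\mathbb Z}$. This recovers Lemma~\ref{antichain}, since any $\subseteq$-antichain suffices and almost disjointness is not needed. It also gives linearly ordered subsets of size $2^{\aleph_0}$, e.g.\ from Dedekind cuts. It does not give a well-ordered strictly increasing sequence of length $\omega_1$, because every such sequence in $\mathcal P(\mathbb P)$ is countable.

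\textbf{Your closing aside is false}, although nothing in the proof depends on it. The relation $\leq_{\mathbb P}$ is the neat-subgroup relation, and over $\mathbb Z$ it is strictly weaker than $\leq_{\mathrm{RD}}=\pleq$. Take $B=\mathbb Z/p\oplus\mathbb Z/p^3$ and $A=\langle(1,p)\rangle\cong\mathbb Z/p^2$. Then $pB\cap A=\langle(0,p^2)\rangle=pA$, and $qB\cap A=A=qA$ for primes $q\neq p$. Yet $(0,p^2)\in p^2B\cap A$ while $p^2A=0$. This is exactly the case $n=1$ of the witness above.
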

\begin{proof}
Fix a prime $p$. For $n<\omega$, define $A\leq_{p,n}B$ if and only if $A\leq B$ and $
p^kB\cap A=p^kA$
for every $k\in\{0,\ldots,n\}$. The relation $\leq_{p,n}$ is pleasant with respect to $\Sigma_{p,n}=\{\exists y\,(p^ky=x)\mid 0\leq k\leq n\}$
and hence $\leq_{p,n}\in\mathscr{L}^{1}_{\mathbb Z}$. The relations $(\leq_{p,n})_{n<\omega}$ form a strictly increasing chain in $\mathscr{L}^{1}_{\mathbb Z}$.
\end{proof}

It is likewise open whether $\mathscr{L}^{1}_{\mathbb Z}$ contains an uncountable strictly increasing chain.

We next show that not all  the strong submodel relations in $\mathscr{L}_{\mathbb Z}$ are of the syntactic form so far considered in this paper.

\begin{definition}\label{def_syntactic}
A binary relation $\preccurlyeq$ on $\rmod$ is \emph{positive syntactic with respect to $\Sigma$} if $\Sigma\subseteq\Lambda^{\infty,\infty}_\infty$ and $\preccurlyeq=\leq_\Sigma$. We say $\preccurlyeq$  is \emph{positive syntactic} if it is positive syntactic with respect to some such $\Sigma$.
\end{definition}

Pleasant and E-nice relations are positive syntactic. Thus every relation introduced so far is positive syntactic. The next theorem shows that this need not hold for an arbitrary element of $\mathscr{L}_{\mathbb Z}$.

\begin{theorem}\label{n-syntac}
There is a relation $\preccurlyeq\in\mathscr{L}^{1}_{\mathbb Z}$ that is not positive syntactic.
\end{theorem}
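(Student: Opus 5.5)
The plan is to find a structural property that every positive syntactic relation satisfies, and then build a relation in $\mathscr{L}^{1}_{\mathbb Z}$ that violates it. The property I will use is \emph{downward monotonicity in the ambient module}:
\begin{center}
if $\preccurlyeq=\leq_\Sigma$ with $\Sigma\subseteq\Lambda^{\infty,\infty}_\infty$, and $A\leq B\leq C$ with $A\leq_\Sigma C$, then $A\leq_\Sigma B$.
\end{center}
This follows at once from Proposition~\ref{preserve}(1) applied to the inclusion $B\to C$. If $B\models\varphi(\bar a)$, then $C\models\varphi(\bar a)$, and hence $A\models\varphi(\bar a)$. So it suffices to produce $\preccurlyeq\in\mathscr{L}^{1}_{\mathbb Z}$ together with abelian groups $A\leq B\leq C$ such that $A\preccurlyeq C$ but $A\not\preccurlyeq B$.

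To break monotonicity, I will use a condition in which a \emph{larger} ambient group can make the requirement on $A$ vacuous. This means mixing a positive hypothesis with a negated divisibility clause. My first attempt is to fix two primes, say $2$ and $3$, and define $A\preccurlyeq B$ iff $A\leq B$ and, for every $a\in A$, at least one of two purity clauses holds:
\begin{center}
($a\in 2B\Rightarrow a\in 2A$) \quad or \quad ($a\in 3B\Rightarrow a\in 3A$).
\end{center}
A variant worth keeping in reserve is to impose one clause only on elements not divisible by the other prime in $B$. The witness triple should be as small as possible, something like $A=\mathbb Z$, $B=\langle \tfrac12,\tfrac13\rangle$ with $C=B$ (so the clause fails for $a=1$), and $C\supseteq B$ chosen so that adding divisibility of an element by a further prime switches the relevant clause to vacuous truth. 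The concrete calibration of the definition and the witnesses will be adjusted together.

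Next I will verify the conditions for membership in $\mathscr{L}^{1}_{\mathbb Z}$. First, $\pleq\subseteq\preccurlyeq\subseteq\leq$, since purity gives both clauses. Second, isomorphism invariance is immediate. Third, because the defining condition concerns single elements and single divisibility witnesses, it is finitary, so the Tarski--Vaught chain axioms should follow by the argument of Lemma~\ref{prop_AEC_general_infinitary}(4): any witness in the union appears at some stage of the chain. Fourth, the L\"owenheim--Skolem--Tarski axiom holds with $\mathrm{LS}=\aleph_0$ by Remark~\ref{easy-rem}, since any pure subgroup containing $X$ works. Fifth, refining $\oleq$ is automatic, because direct summands are pure.

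The hard part will be transitivity and coherence. The disjunctive, non-monotone nature of the definition, which is exactly what defeats positive syntacticity, is also what threatens these two axioms. For example, in coherence ($A\preccurlyeq C$, $B\preccurlyeq C$, $A\leq B$), an element $a$ may satisfy the first clause in $C$ only vacuously while $B$ is too small to make it vacuous. I will first test the candidate definition against these two axioms on small examples (subgroups of $\mathbb Q$ and finite cyclic groups). I will then tune it, for instance by making the clauses nested by prime rather than symmetric, or by restricting to torsion-free or specific heights, until transitivity and coherence hold and the monotonicity counterexample still exists. Once that is done, the downward monotonicity property shows that $\preccurlyeq$ is not positive syntactic.
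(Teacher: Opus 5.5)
Your invariant is correct: if $\preccurlyeq=\leq_\Sigma$ with $\Sigma\subseteq\Lambda^{\infty,\infty}_\infty$, then Proposition~\ref{preserve}(1) applied to $A\subseteq B\subseteq C$ shows that $A\leq_\Sigma C$ implies $A\leq_\Sigma B$. The gap is that you never exhibit a relation in $\mathscr{L}^1_{\mathbb Z}$ that violates it. That is the whole content of the theorem, and neither candidate you write down can be tuned into one.

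The disjunctive relation is not transitive. $\mathbb Z\preccurlyeq\tfrac12\mathbb Z$ holds via the $3$-clause, since $\tfrac32\mathbb Z\cap\mathbb Z=3\mathbb Z$. $\tfrac12\mathbb Z\preccurlyeq\tfrac16\mathbb Z$ holds via the $2$-clause, since $\tfrac13\mathbb Z\cap\tfrac12\mathbb Z=\mathbb Z$. But both clauses fail for $a=1$ in $\mathbb Z\leq\tfrac16\mathbb Z$. Worse, this relation is downward monotone, because each clause $a\in pY\Rightarrow a\in pA$ only weakens as $Y$ shrinks. Enlarging the ambient makes the requirement harder, never vacuous.

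The reserve variant imposes the $2$-clause only on $a\notin 3Y$. If $3$-divisibility is also reflected, the relation is again monotone: an $a\in 3C\setminus 3B$ would lie in $3A\subseteq 3B$. If it is not reflected, coherence fails on $\mathbb Z\leq\tfrac12\mathbb Z\leq\tfrac16\mathbb Z$. Since $3\cdot\tfrac16\mathbb Z=\tfrac12\mathbb Z$, both $\mathbb Z$ and $\tfrac12\mathbb Z$ are vacuously strong in $\tfrac16\mathbb Z$, while $\mathbb Z\not\preccurlyeq\tfrac12\mathbb Z$. An escape that the ambient can switch on for all elements at once is exactly what coherence forbids.

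The missing idea is to make transitivity and coherence automatic by letting $\preccurlyeq$ preserve \emph{and} reflect a fixed non-positive predicate. If $X\preccurlyeq Y$ means $X\leq Y$ and $P(X)=P(Y)\cap X$, both axioms are one-line intersection arguments. Take $P(Y)=2Y\setminus 4Y$:
\begin{itemize}
\item Purity gives $\pleq\subseteq\preccurlyeq$, hence $\mathrm{LS}=\aleph_0$ and $\oleq\subseteq\preccurlyeq$.
\item The chain axioms follow from $2\bigcup_i A_i=\bigcup_i 2A_i$ and $4\bigcup_i A_i=\bigcup_i 4A_i$.
\item For $A=4(\mathbb Z/8\mathbb Z)\leq B=2(\mathbb Z/8\mathbb Z)\leq C=\mathbb Z/8\mathbb Z$ you get $A\cap(2C\setminus 4C)=\emptyset=2A\setminus 4A$, so $A\preccurlyeq C$.
\item But $4\in A\cap(2B\setminus 4B)$, so $A\not\preccurlyeq B$.
\end{itemize}
With this relation your route goes through.

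The paper uses a different invariant: $A\leq_\Sigma B$ implies $A\oplus\mathbb Z\leq_\Sigma B\oplus\mathbb Z$, proved by applying Proposition~\ref{preserve} to inclusions and projections and adding. It tests this against the relation ``divisibility is preserved and reflected at elements of infinite order'', with witness $p(\mathbb Z/p^2\mathbb Z)\leq\mathbb Z/p^2\mathbb Z$. That relation is itself downward monotone, since having infinite order does not depend on the ambient. So you cannot borrow it: your invariant would not detect it.
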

\begin{proof}
For $A,B\in\zmod$, define $A\preccurlyeq B$ if and only if $A\leq B$ and, for every element $a\in A$ of infinite order and every integer $0<n<\omega$,
$A\models\exists y\,(ny=a)$ if and only if  $B\models\exists y\,(ny=a).$
It is straightforward to verify that $(\zmod,\preccurlyeq)$ is an AEC and that $\preccurlyeq$ lies below $\pleq$ in $\mathscr{L}_{\mathbb Z}$.

\smallskip \noindent Suppose, toward a contradiction, that $\preccurlyeq=\leq_\Sigma$ for some $\Sigma\subseteq\Lambda^{\infty,\infty}_\infty$. Fix a prime $p$, let $B=\mathbb Z/p^2\mathbb Z$, and let $A=pB$. Since $A$ has no elements of infinite order, $A\preccurlyeq B$, and therefore $A\leq_\Sigma B$. By applying Proposition~\ref{preserve} to the canonical inclusions and projections and adding, we obtain
$A\oplus\mathbb Z\leq_\Sigma B\oplus\mathbb Z.$
Hence $A\oplus\mathbb Z\preccurlyeq B\oplus\mathbb Z$.

\smallskip \noindent The element
$(p+p^2\mathbb Z,p)\in A\oplus\mathbb Z$
has infinite order. It is divisible by $p$ in $B\oplus\mathbb Z$, but it is not divisible by $p$ in $A\oplus\mathbb Z$. This contradicts that $A\oplus\mathbb Z\leq_\Sigma B\oplus\mathbb Z.$
\end{proof}

\begin{remark}
The relation from Theorem~\ref{n-syntac} does not have the amalgamation property. The span
\[
p(\mathbb Z/p^2\mathbb Z)\preccurlyeq
\mathbb Z/p^2\mathbb Z,
\quad
p(\mathbb Z/p^2\mathbb Z)\preccurlyeq
p(\mathbb Z/p^2\mathbb Z)\oplus\mathbb Z
\]
cannot be completed to a commutative square of strong embeddings. We do not know whether $(\zmod,\preccurlyeq)$ is stable.
\end{remark}

\subsection{\texorpdfstring{$\mathscr{L}_{\mathbb{Z}}$ is a proper class}{$L_{\mathbb Z}$ is a proper class}}\label{5.2}

We recall several classical notions from abelian group theory.

\begin{definition}[{\cite[p.~299]{fuchs}}]
Let $A\in\zmod$ and let $p$ be a prime. For every ordinal $\alpha$, define $p^\alpha A$ by induction as follows:
\begin{enumerate}[(1)]
\item $p^0A=A$;
\item $p^{\beta+1}A=p(p^\beta A)$;
\item if $\alpha$ is a limit ordinal, then $p^\alpha A=\bigcap_{\beta<\alpha}p^\beta A$.
\end{enumerate}
\end{definition}

\begin{definition}[{\cite[p.~386]{fuchs}}]
Let $\alpha$ be an ordinal and let $p$ be a prime. For $A,B\in\zmod$, we say that $A$ is \emph{$p^\alpha$-isotype in $B$}, denoted by $A\leq^{p^\alpha}_{\mathrm{iso}}B$, if $A\leq B$ and
$p^\beta B\cap A=p^\beta A$
for every $\beta\leq\alpha$.
\end{definition}

\begin{proposition}\label{iso_nice}
Let $\alpha$ be an ordinal and let $p$ be a prime. The relation $\leq^{p^\alpha}_{\mathrm{iso}}$ is E-nice with respect to a set
\[
\{\psi_\beta(x_0)\mid\beta\leq\alpha\}
\subseteq
\Lambda^{|\alpha|^++\aleph_0,\,|\alpha|^++\aleph_0}_1.
\]
Moreover, $\leq^{(|\alpha|^++\aleph_0,\infty)}_{\mathrm{pp}}
\subseteq
\leq^{p^\alpha}_{\mathrm{iso}}$, 
and $\leq^{p^\alpha}_{\mathrm{iso}}$ refines the direct summand relation.
\end{proposition}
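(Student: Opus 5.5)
Our plan is to write each ``$x_0\in p^\beta B$'' as an infinitary pp-formula $\psi_\beta(x_0)$, by induction on $\beta\leq\alpha$, of existential depth one in the sense of E-nice sets. Set $\kappa=|\alpha|^++\aleph_0$.

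\emph{Construction of $\psi_\beta$.} Let $\psi_0(x_0)$ be $x_0=x_0$, i.e.\ the trivial equation $0x_0=0$. If $\psi_\beta(x)\equiv\exists\bar v\bigwedge_i\delta_i(\bar v,x)=0$, put
\[
\psi_{\beta+1}(x_0)\equiv\exists w\,\exists\bar v\,\Bigl(pw-x_0=0\wedge\bigwedge_i\delta_i(\bar v,w)=0\Bigr).
\]
If $\beta$ is a limit, take the conjunction of the $\psi_\gamma(x_0)$, $\gamma<\beta$, after renaming their existential tuples apart, and pull all quantifiers to the front, as in the proof of Proposition~\ref{equiv-exist}. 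By induction, $\psi_\beta$ has at most $|\beta|+\aleph_0<\kappa$ existential variables and conjuncts. Each conjunct is a linear equation involving finitely many variables, so $\psi_\beta\in\Lambda^{\kappa,\kappa}_1$ and has the E-nice shape. A straightforward induction on $\beta$ shows $B\models\psi_\beta(b)\iff b\in p^\beta B$ for every group $B$. The successor step uses that the witness $w$ ranges over $p^\beta B$, and the limit step uses the definition $p^\beta B=\bigcap_{\gamma<\beta}p^\gamma B$.

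\emph{Equality of relations.} Let $\Sigma=\{\psi_\beta\mid\beta\leq\alpha\}$. For $A\leq B$, $A\leq_\Sigma B$ says exactly that, for each $\beta\leq\alpha$ and $a\in A$, $a\in p^\beta A\iff a\in p^\beta B$. Since $p^\beta A\subseteq p^\beta B$ always holds (Proposition~\ref{preserve}), this is equivalent to $p^\beta B\cap A=p^\beta A$. Hence $\leq^{p^\alpha}_{\mathrm{iso}}=\leq_\Sigma$, and the relation is E-nice with respect to $\Sigma$. Only the formal E-nice shape is needed here: the indexing by $\lambda=|\alpha|+\aleph_0$ can be achieved by padding with trivial conjuncts and dummy variables.

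\emph{Moreover part.} Each $\psi_\beta$ lies in $\Lambda^{\kappa,\aleph_0}_\infty$ up to equivalence. To see this, build it directly with finitely many existential quantifiers per layer: $\psi_{\beta+1}(x_0)=\exists w\,(pw=x_0\wedge\psi_\beta(w))$, and $\psi_\beta=\bigwedge_{\gamma<\beta}\psi_\gamma$ at limits, written as $\exists\emptyset\bigwedge$ with fewer than $\kappa$ conjuncts. This version sits in $\Lambda^{\kappa,\aleph_0}_{\beta+1}$ and is equivalent to the flattened one. Therefore $A\leq^{(\kappa,\infty)}_{\mathrm{pp}}B$ implies $A\leq_\Sigma B=A\leq^{p^\alpha}_{\mathrm{iso}}B$. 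Refinement of the direct summand relation then follows from Corollary~\ref{ref_sums}, since $\Sigma\subseteq\Lambda^{\infty,\infty}_\infty$.

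The main obstacle is the bookkeeping: checking that the cardinality bounds stay below $\kappa$ through all limit stages, and that the finite-variable layered version really lives in $\Lambda^{\kappa,\aleph_0}$. For the latter, note that $\beta+1$ and the limit rule fit Definition~\ref{deflambdaformulas}, with $\zeta=1$ or $0$ new variables and fewer than $\kappa$ conjuncts because $|\beta|<\kappa$.
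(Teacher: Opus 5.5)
Your proposal is correct and is essentially the paper's argument. The paper first writes down the layered formulas $\varphi_{\gamma+1}(x_0)=\exists v\,(pv=x_0\wedge\varphi_\gamma(v))$ and $\varphi_\beta=\bigwedge_{\gamma<\beta}\varphi_\gamma$ at limits, flattens them into E-nice $\psi_\beta$ via Proposition~\ref{equiv-exist}, and reads off the moreover part from $\varphi_\beta\in\Lambda^{\kappa,\aleph_0}_\infty$; you do the flattening inline and use the layered version only for the moreover part, which is the same proof in a different order. One harmless bookkeeping slip: if $\alpha$ is finite then $\kappa=\aleph_0$ and $|\beta|+\aleph_0<\kappa$ fails, but in that case each $\psi_\beta$ has only finitely many variables and conjuncts, which is all you need.
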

\begin{proof}
We recursively define formulas $\varphi_\beta(x_0)$ for $\beta\leq\alpha$.

\smallskip \noindent Let $\varphi_0(x_0):=x_0=x_0$.

\smallskip \noindent If $\beta=\gamma+1$, let
\[
\varphi_{\gamma+1}(x_0):=\exists v\,(pv=x_0\wedge\varphi_\gamma(v)).
\]

\smallskip \noindent If $\beta$ is a limit ordinal, let
\[
\varphi_\beta(x_0):=\bigwedge_{\gamma<\beta}\varphi_\gamma(x_0).
\]

\smallskip \noindent Put $\Delta=\{\varphi_\beta(x_0)\mid\beta\leq\alpha\}$. Then $A\leq^{p^\alpha}_{\mathrm{iso}}B$ if and only if  $A\leq_\Delta B.$

\smallskip \noindent For every $\beta\leq\alpha$, $\varphi_\beta(x_0)\in \Lambda^{|\beta|^++\aleph_0,\aleph_0}_{\beta}$. Proposition~\ref{equiv-exist} therefore gives an E-nice formula
$\psi_\beta(x_0)\in \Lambda^{|\beta|^++\aleph_0,\,|\beta|^++\aleph_0}_1$
equivalent to $\varphi_\beta(x_0)$ for every $\beta \leq \alpha$. Hence $\leq^{p^\alpha}_{\mathrm{iso}}
=
\leq_{\{\psi_\beta(x_0)\mid\beta\leq\alpha\}}$, 
so $\leq^{p^\alpha}_{\mathrm{iso}}$ is E-nice.

\smallskip \noindent The \emph{moreover part} follows because each $\varphi_\beta(x_0) \in \Lambda^{|\alpha|^++\aleph_0,\aleph_0}_\infty$. The final assertion follows from Corollary~\ref{ref_sums}.
\end{proof}

\begin{lemma}\label{iso_AEC}
Let $\alpha$ be an ordinal and let $p$ be a prime. Then $(\zmod,\leq^{p^\alpha}_{\mathrm{iso}})$ is an AEC with
\[
\mathrm{LS}(\zmod,\leq^{p^\alpha}_{\mathrm{iso}})
\leq |\alpha|+\aleph_0.
\]
\end{lemma}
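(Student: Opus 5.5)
We verify the AEC axioms directly, using the description $A\leq^{p^\alpha}_{\mathrm{iso}}B$ iff $A\leq_\Delta B$ with $\Delta=\{\varphi_\beta(x_0)\mid\beta\leq\alpha\}$ from the proof of Proposition~\ref{iso_nice}. Closure under isomorphisms and $A\leq B$ are immediate. Transitivity follows from the syntactic description, and reflexivity is trivial. Coherence is immediate for any relation of the form $\leq_\Delta$ with positive formulas: if $A\leq_\Delta C$, $B\leq_\Delta C$ and $A\leq B$, then for $a\in A$, $B\models\varphi_\beta(a)$ implies $C\models\varphi_\beta(a)$ by Proposition~\ref{preserve}, hence $A\models\varphi_\beta(a)$. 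The converse direction is again Proposition~\ref{preserve}.

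For the Tarski--Vaught axioms, let $(A_i)_{i<\delta}$ be an increasing continuous chain and $A=\bigcup_i A_i$. I would prove by induction on $\beta\leq\alpha$ that for all $i<\delta$ and $a\in A_i$, $A\models\varphi_\beta(a)$ iff $A_i\models\varphi_\beta(a)$. The successor case mirrors Lemma~\ref{prop_AEC_general_infinitary}. If $A\models\exists v(pv=a\wedge\varphi_\gamma(v))$, pick a witness $v\in A_j$ with $j\geq i$. By induction $A_j\models\varphi_\gamma(v)$, so $A_j\models\varphi_{\gamma+1}(a)$, and then $A_i\leq_\Delta A_j$ gives $A_i\models\varphi_{\gamma+1}(a)$. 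The limit case is a conjunction and follows directly. Smoothness is handled the same way: with $A_i\leq_\Delta B$ for all $i$ and $a\in A_i$, if $B\models\varphi_\beta(a)$ then $A_i\models\varphi_\beta(a)$, hence $A\models\varphi_\beta(a)$ by preservation.

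The main work is the Löwenheim--Skolem--Tarski bound $|\alpha|+\aleph_0$. The point is that all formulas in $\Delta$ have a single free variable and existential blocks of single variables, so closing under witnesses costs only $|\alpha|$ per element. Given $X\subseteq B$, I build $A_0\subseteq A_1\subseteq\cdots$ with $A_0=\langle X\rangle$. Given $A_n$, for each $a\in A_n$ and each $\gamma<\alpha$ with $B\models\varphi_{\gamma+1}(a)$, choose $v\in B$ with $pv=a$ and $B\models\varphi_\gamma(v)$. Let $A_{n+1}$ be the subgroup generated by $A_n$ and all these witnesses, so $|A_{n+1}|\leq|A_n|+|\alpha|+\aleph_0$. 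Put $A=\bigcup_n A_n$, so $|A|\leq|X|+|\alpha|+\aleph_0$.

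It remains to show, by induction on $\beta\leq\alpha$, that for $a\in A$, $B\models\varphi_\beta(a)$ implies $A\models\varphi_\beta(a)$. Since $\varphi_0$ is trivial, take the successor case $\beta=\gamma+1$ with $\gamma<\alpha$. The chosen witness $v$ lies in $A$ and satisfies $B\models\varphi_\gamma(v)$, so by induction $A\models\varphi_\gamma(v)$, and hence $A\models\varphi_{\gamma+1}(a)$. The limit case is again a conjunction. The only subtlety is that the induction hypothesis must be applied to the new witness $v$, not just to $a$; this is why the statement is proved uniformly for all elements of $A$ at each level $\beta$ rather than by fixing $a$ first. This step is the obstacle, and the uniform induction resolves it.
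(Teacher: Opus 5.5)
Your proof is correct, but it reaches the L\"owenheim--Skolem--Tarski bound, and to a lesser extent the chain axioms, by a different route from the paper's.

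The paper handles the axioms as follows:
\begin{itemize}
\item It cites Fuchs (or Proposition~\ref{iso_nice}) for transitivity, coherence and Condition~(4.2).
\item It checks smoothness algebraically via $p^\beta B\cap A_i=p^\beta A_i\subseteq p^\beta A$.
\item For Condition~(6) it passes to the flattened E-nice equivalents $\psi_\beta(x_0)$ from Proposition~\ref{iso_nice}. Each of these is a single existential block of at most $|\alpha|+\aleph_0$ variables over a conjunction of linear equations. Once a full witness tuple is added, $\psi_\beta$ holds in any submodule containing it, so no induction on $\beta$ is needed.
\end{itemize}

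You instead keep the nested formulas $\varphi_\beta$. For each element and each successor level $\gamma+1\leq\alpha$ you add just one $p$-th root $v\in p^\gamma B$. You then recover $A\leq_\Delta B$ by an induction on $\beta$ carried out uniformly over all of $A$, which is exactly what lets you apply the hypothesis to the new witness $v$. Likewise, you prove (4.2) with the inductive template of Lemma~\ref{prop_AEC_general_infinitary} rather than by citing Fuchs.

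Your route is more self-contained and algebraically transparent. It shows directly why only $|\alpha|$ witnesses per element are needed, and it avoids the normal form of Proposition~\ref{equiv-exist}. The paper's route is the uniform witness-closing argument that works verbatim for any E-nice set of formulas, at the cost of relying on that normal form.
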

\begin{proof}
It is clear that $(\zmod,\leq^{p^\alpha}_{\mathrm{iso}})$ is an abstract class. Transitivity follows from Proposition~\ref{iso_nice}, or directly from \cite[p.~365, Condition~(b)]{fuchs}. We verify Conditions~(4)--(6) of Definition~\ref{def_AEC}.

\begin{enumerate}[(4)]
\item[(4)]  Let $(A_i)_{i<\delta}$ be an increasing continuous $\leq^{p^\alpha}_{\mathrm{iso}}$-chain, and let $A:=\bigcup_{i<\delta}A_i$. Condition~(4.1) is clear, and Condition~(4.2) is \cite[p.~365, Condition~(c)]{fuchs}. To verify Condition~(4.3), suppose that $A_i\leq^{p^\alpha}_{\mathrm{iso}}B$ for every $i<\delta$. Fix $\beta\leq\alpha$. The inclusion $p^\beta A\subseteq p^\beta B\cap A$ is clear. Conversely, if $a\in p^\beta B\cap A$, choose $i<\delta$ with $a\in A_i$. Then
$a\in p^\beta B\cap A_i=p^\beta A_i\subseteq p^\beta A.$
Thus $A\leq^{p^\alpha}_{\mathrm{iso}}B$.

\item[(5)] Coherence follows from Proposition~\ref{iso_nice}, or directly from \cite[p.~365, Condition~(a)]{fuchs}.

\item[(6)] Let $X\subseteq B\in\zmod$, and let $\kappa:=|\alpha|+\aleph_0$. Let $\Sigma=\{\psi_\beta(x_0)\mid\beta\leq\alpha\}$ 
be the E-nice set given by Proposition~\ref{iso_nice}. We construct an increasing chain $(A_n)_{n<\omega}$ of submodules of $B$. Set $A_0=\langle X\rangle$. Having defined $A_n$, for every $a\in A_n$ and $\beta\leq\alpha$ such that $B\models\psi_\beta(a)$, choose in $B$ a tuple of witnesses for the existential quantifiers of $\psi_\beta(a)$, and let $A_{n+1}$ be the submodule generated by $A_n$ together with all these witnesses. At each stage, at most $|A_n|+\kappa$ tuples, each of length at most $\kappa$, are added. Hence
$|A_n|\leq |X|+\kappa$
for every $n<\omega$.

\smallskip \noindent Let $A:=\bigcup_{n<\omega}A_n$. Then $X\subseteq A\leq B$ and $|A|\leq|X|+\kappa$. If $a\in A$ and $B\models\psi_\beta(a)$, the required witnesses were added at a later stage, so $A\models\psi_\beta(a)$. The converse follows from Proposition~\ref{preserve}. Thus $A\leq_\Sigma B$, and Proposition~\ref{iso_nice} gives $A\leq^{p^\alpha}_{\mathrm{iso}}B$.
\end{enumerate}
\end{proof}

\begin{corollary}
For every ordinal $\alpha$ and every prime $p$, the AEC $(\zmod,\leq^{p^\alpha}_{\mathrm{iso}})$ is stable.
\end{corollary}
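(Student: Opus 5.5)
The plan is to apply Theorem~\ref{sta-nice} with $\mathbf K=\zmod$ and $\preccurlyeq=\leq^{p^\alpha}_{\mathrm{iso}}$, so it suffices to check its three hypotheses.

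\emph{E-niceness.} By Proposition~\ref{iso_nice}, $\leq^{p^\alpha}_{\mathrm{iso}}$ is E-nice with respect to the set $\{\psi_\beta(x_0)\mid\beta\leq\alpha\}$. This set lies in $\Lambda^{\lambda^+,\lambda^+}_1$ for $\lambda=|\alpha|+\aleph_0$. Since $\operatorname{card}(\mathbb Z)=\aleph_0\leq\lambda$, the cardinal requirement in Definition~\ref{pre_def_nice} is satisfied.

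\emph{Hypothesis (1), that $(\zmod,\leq^{p^\alpha}_{\mathrm{iso}})$ is an AEC.} This is exactly Lemma~\ref{iso_AEC}.

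\emph{Hypothesis (2), closure of the underlying class under submodules.} This is trivial, because the underlying class is all of $\zmod$ and every subgroup of an abelian group is again an abelian group.

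With these checks done, Theorem~\ref{sta-nice} gives stability directly, and in fact $\theta$-stability for every $\theta$ with $\theta^{2^{|\alpha|+\aleph_0}}=\theta$. The only point requiring care, which I regard as the closest thing to an obstacle, is matching the cardinal $\lambda$ in Proposition~\ref{iso_nice} with the format of Definition~\ref{pre_def_nice}. If $|\alpha|^+$ is used in place of $\lambda^+$, one can pad conjunctions and variable tuples with trivial equations $0=0$ and dummy variables to reach length exactly $\lambda$. This padding does not change which formulas are satisfied, so the resulting relation $\leq_\Sigma$ is the same.

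An alternative route is also available. By the moreover part of Proposition~\ref{iso_nice}, $\leq^{p^\alpha}_{\mathrm{iso}}=\leq_\Delta$ with $\Delta\subseteq\Lambda^{\infty,\aleph_0}_\infty$. Combined with Lemma~\ref{iso_AEC}, this places $\leq^{p^\alpha}_{\mathrm{iso}}$ in $\mathscr{L}_{\mathbb Z}$, and Corollary~\ref{more-sta} then applies directly.
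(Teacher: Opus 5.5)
Your proposal is correct and is the same argument as the paper's one-line proof: it combines Proposition~\ref{iso_nice} (E-niceness), Lemma~\ref{iso_AEC} (AEC), and Theorem~\ref{sta-nice}, with closure under submodules trivial because the class is all of $\zmod$. Your padding remark for matching the format of Definition~\ref{pre_def_nice}, and your alternative route through Corollary~\ref{more-sta}, are both sound and only make explicit what the paper leaves implicit.
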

\begin{proof}
This follows from Proposition~\ref{iso_nice}, Lemma~\ref{iso_AEC}, and Theorem~\ref{sta-nice}.
\end{proof}

Recall that the \emph{$p$-length} of a $\mathbb Z$-module $A$, denoted by $\operatorname{len}_p(A)$, is the least ordinal $\alpha$ such that $p^{\alpha+1}A=p^\alpha A$.

\begin{proposition}\label{crucial_prop}
Let $p$ be a prime. If $\theta$ is an infinite cardinal, then $
\mathrm{LS}(\zmod,\leq^{p^\theta}_{\mathrm{iso}})=\theta$.
\end{proposition}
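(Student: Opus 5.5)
The upper bound is immediate from Lemma~\ref{iso_AEC}: $\mathrm{LS}(\zmod,\leq^{p^\theta}_{\mathrm{iso}})\leq|\theta|+\aleph_0=\theta$. The work is the lower bound. I will show that for every $\mu<\theta$ (with $\mu\geq\aleph_0$, the least value the definition of the LS number allows) the axiom fails with bound $\mu$. Concretely, I want a group $B$ and a single element $b\in B$ such that every $A$ with $b\in A\leq^{p^\theta}_{\mathrm{iso}}B$ has $|A|\geq\theta$. Then no cardinal $\lambda<\theta$ can witness the L\"owenheim--Skolem--Tarski axiom for $X=\{b\}$.

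For the witness I take a reduced $p$-group $B$ of length $\theta+1$ whose top layer is a single nonzero element $b$, so that $p^\theta B=\langle b\rangle\cong\mathbb Z/p$. A generalized Pr\"ufer group $H_{\theta+1}$ works. By the classical existence theorem for $p$-groups of prescribed length (Fuchs), such groups exist, and I may take $|B|=\theta$.

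Now let $b\in A\leq^{p^\theta}_{\mathrm{iso}}B$. Then $b\in p^\theta B\cap A=p^\theta A$, so $p^\theta A\neq 0$. Since $A$ is a subgroup of the reduced group $B$, it is also reduced. I then expect to use the standard fact that a reduced $p$-group with $p^\alpha A\neq0$ satisfies $p^\beta A\supsetneq p^{\beta+1}A$ for every $\beta<\alpha$. The reason is that if $p^\beta A=p^{\beta+1}A$, the sequence stabilizes at a divisible subgroup, which must be $0$ in a reduced group, contradicting $p^\alpha A\neq0$.

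From this strict chain I pick, for each $\beta<\theta$, an element $a_\beta\in p^\beta A\setminus p^{\beta+1}A$. These elements are pairwise distinct, since each has exact height $\beta$. Hence $|A|\geq|\theta|=\theta$, which gives the lower bound.

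The main obstacle is justifying the existence of $B$, a reduced $p$-group with $p^\theta B\neq0$ and $|B|=\theta$. If I do not want to cite Fuchs, I can build it directly: an inductive construction of groups of length $\alpha$ with a distinguished element of height $\alpha$. At successor steps take $\mathbb Z/p$ extensions; at limit steps take the direct sum of the earlier groups and quotient by identifying the distinguished elements along cofinal sequences. I expect the cited existence theorem to suffice.
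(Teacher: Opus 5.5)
Your proof is correct and follows essentially the same route as the paper. You get the upper bound from Lemma \ref{iso_AEC}. You then take a $p$-group $B$ of length $\theta+1$ (via Fuchs) with an element $b$ of height exactly $\theta$. Finally, you observe that any $A\leq^{p^\theta}_{\mathrm{iso}}B$ containing $b$ satisfies $p^\beta A\supsetneq p^{\beta+1}A$ for all $\beta<\theta$, so $|A|\geq\theta$.

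There are only cosmetic differences. You derive the strictness of the chain from reducedness of $A$: stabilization would produce a nonzero divisible subgroup. The paper instead uses $b\notin p^{\theta+1}B\supseteq p^{\theta+1}A$. You also count elements of distinct heights explicitly, where the paper invokes $\operatorname{len}_p(A)<|A|^+$.
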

\begin{proof}
Lemma~\ref{iso_AEC} gives $
\mathrm{LS}(\zmod,\leq^{p^\theta}_{\mathrm{iso}})\leq\theta$.
Suppose, toward a contradiction, that  $
\mathrm{LS}(\zmod,\leq^{p^\theta}_{\mathrm{iso}}) = \lambda<\theta$.

\smallskip \noindent By \cite[Theorem~10.1.6]{fuchs}, there is a $p$-group $B$ of $p$-length $\theta+1$. Choose $
b\in p^\theta B\setminus p^{\theta+1}B$.
The L\"owenheim--Skolem--Tarski axiom yields $A\leq^{p^\theta}_{\mathrm{iso}}B$ such that $b\in A$ and $|A|\leq\lambda<\theta$. By cardinality considerations, $
\operatorname{len}_p(A)<|A|^+\leq\theta$.
On the other hand, $\operatorname{len}_p(A)\geq\theta+1$. Indeed, if $p^\alpha A=p^{\alpha+1}A$ for some $\alpha<\theta+1$, then an easy induction gives that $p^\theta A=p^{\theta+1}A$. This is impossible, since $
b\in p^\theta B\cap A=p^\theta A$
but $b\notin p^{\theta+1}A$ as $b\notin p^{\theta+1}B$. This contradiction proves the result.
\end{proof}

\begin{theorem}\label{class_size}
The lattice $\mathscr{L}_{\mathbb Z}$ is a proper class. More precisely, for every fixed prime $p$, the relations $
\{\leq^{p^\theta}_{\mathrm{iso}}\mid\theta\in\mathrm{Card}\}$
form a strictly increasing proper-class-sized chain in $\mathscr{L}_{\mathbb Z}$.
\end{theorem}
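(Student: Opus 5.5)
We have to show three things: each $\leq^{p^\theta}_{\mathrm{iso}}$ lies in $\mathscr{L}_{\mathbb Z}$; the family is a chain in the order of $\mathscr{L}_{\mathbb Z}$; and the chain is strictly increasing with proper-class length.

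\emph{Membership.} For each infinite cardinal $\theta$, Lemma~\ref{iso_AEC} shows that $(\zmod,\leq^{p^\theta}_{\mathrm{iso}})$ is an AEC with $\mathrm{LS}\leq\theta$. Proposition~\ref{iso_nice} shows that $\leq^{p^\theta}_{\mathrm{iso}}$ refines the direct summand relation. Hence $\leq^{p^\theta}_{\mathrm{iso}}\in\mathscr{L}_{\mathbb Z,\theta}\subseteq\mathscr{L}_{\mathbb Z}$.

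\emph{Chain.} Let $\theta<\theta'$. If $A\leq^{p^{\theta'}}_{\mathrm{iso}}B$, then $p^\beta B\cap A=p^\beta A$ holds for every $\beta\leq\theta'$. In particular it holds for every $\beta\leq\theta$, so $A\leq^{p^\theta}_{\mathrm{iso}}B$. Thus $\leq^{p^{\theta'}}_{\mathrm{iso}}\subseteq\leq^{p^\theta}_{\mathrm{iso}}$. Since $\mathscr{L}_{\mathbb Z}$ is ordered by reverse inclusion, $\leq^{p^{\theta'}}_{\mathrm{iso}}$ lies above $\leq^{p^\theta}_{\mathrm{iso}}$, so the family is increasing.

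\emph{Strictness.} This is the only non-formal step. I would argue by LS numbers rather than by building explicit groups. Suppose $\theta<\theta'$ are infinite cardinals and the two relations were equal. Then their AECs would coincide, so they would have the same Löwenheim--Skolem number. But Proposition~\ref{crucial_prop} gives $\mathrm{LS}(\zmod,\leq^{p^\theta}_{\mathrm{iso}})=\theta\neq\theta'=\mathrm{LS}(\zmod,\leq^{p^{\theta'}}_{\mathrm{iso}})$. Hence the inclusion is strict.

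\emph{Proper class.} The map $\theta\mapsto\leq^{p^\theta}_{\mathrm{iso}}$ is therefore injective on the proper class of infinite cardinals, and its image is a strictly increasing chain indexed by $\mathrm{Card}$. In particular $\mathscr{L}_{\mathbb Z}$ is a proper class.

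If one wants an explicit witness of strictness instead, take the $p$-group $B$ of length $\theta'+1$ used in Proposition~\ref{crucial_prop}. A $\leq^{p^\theta}_{\mathrm{iso}}$-substructure of size $\theta$ containing $b\in p^{\theta'}B\setminus p^{\theta'+1}B$ cannot be $p^{\theta'}$-isotype in $B$, by the same length argument. The LS-number route avoids redoing this, so I would use it.
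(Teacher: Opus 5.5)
Your proposal is correct and is essentially the paper's proof. The chain property is immediate from the definition of $\leq^{p^\alpha}_{\mathrm{iso}}$, strictness and proper-class size come from the distinct L\"owenheim--Skolem numbers in Proposition~\ref{crucial_prop}, and your check of membership via Lemma~\ref{iso_AEC} and Proposition~\ref{iso_nice} just spells out what the paper leaves implicit. As in the paper, this argument separates only infinite indices, since finite $\theta$ all give L\"owenheim--Skolem number $\aleph_0$, but that is all the proper-class conclusion needs.
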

\begin{proof}
The relations become stronger as $\theta$ increases, and Proposition~\ref{crucial_prop} shows that relations indexed by distinct infinite cardinals have distinct L\"owenheim--Skolem numbers. Hence the chain is strict and proper-class-sized.
\end{proof}

\begin{remark}
Let $\alpha$ be an ordinal and let $p$ be a prime. Define $A\leq^{p^\alpha}_{\mathrm{iso},\mathrm{pp}}B$
if and only if $A\leq^{p^\alpha}_{\mathrm{iso}}B$ and $A\pleq B$.

\smallskip \noindent  All results proved above for $\leq^{p^\alpha}_{\mathrm{iso}}$ extend to $\leq^{p^\alpha}_{\mathrm{iso},\mathrm{pp}}$. In particular, $
\{\leq^{p^\theta}_{\mathrm{iso},\mathrm{pp}}\mid\theta\in\mathrm{Card}\}$
forms a strictly increasing proper-class-sized chain in $\mathscr{L}^{2}_{\mathbb Z}$ and $\mathscr{L}^{2}_{\mathbb Z}$ is a proper class.
\end{remark}

\subsection{Failure of amalgamation}\label{5.3}

We conclude by showing that amalgamation often fails in $\mathscr{L}_{\mathbb Z}$. More precisely, if $\preccurlyeq\in\mathscr{L}_{\mathbb Z}$ is at least as strong as $\pleqq$ and satisfies a suitable syntactic hypothesis, then $(\zmod,\preccurlyeq)$ fails amalgamation. This indicates that the model theory of strong submodel relations in $\mathscr{L}^{2}_{\mathbb Z}$ is  more complicated than that of $(\zmod,\pleq)$.

\begin{definition}
The abstract class $(\rmod,\preccurlyeq)$ is \emph{closed under pushouts} if, for every $A\preccurlyeq B,C$, the  maps of the pushout
$(B\oplus_A C,f:B\to B\oplus_A C,g:C\to B\oplus_A C)$
are strong embeddings.
\end{definition}

\begin{proposition}\label{push-AP}
Let $\preccurlyeq\in\mathscr{L}_{R}$ be positive syntactic.  $(\rmod,\preccurlyeq)$ has the amalgamation property if and only if  $(\rmod,\preccurlyeq)$ is closed under pushouts.
\end{proposition}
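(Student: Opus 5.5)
The converse direction is immediate. If $(\rmod,\preccurlyeq)$ is closed under pushouts, then for any span $A\preccurlyeq B,C$ the pushout $(B\oplus_A C,f,g)$ from Remark~\ref{pushout} is a commutative square. Its maps $f,g$ are strong embeddings, and $f\restriction A=g\restriction A$. This is exactly an amalgam, so the amalgamation property holds. All the work is therefore in the forward implication.

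For the forward implication, fix $\Sigma\subseteq\Lambda^{\infty,\infty}_\infty$ with $\preccurlyeq=\leq_\Sigma$, and a span $A\preccurlyeq B,C$. By amalgamation there are $D\in\rmod$ and strong embeddings $f':B\to D$ and $g':C\to D$ with $f'\restriction A=g'\restriction A$. By the universal property in Remark~\ref{pushout}, there is a homomorphism $t:B\oplus_A C\to D$ with $t\circ f=f'$ and $t\circ g=g'$. By symmetry it suffices to show that $f[B]\leq_\Sigma B\oplus_A C$.

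First, $f$ is a monomorphism (Remark~\ref{pushout}), so $f[B]\leq B\oplus_A C$ and $f$ is an isomorphism of $B$ onto $f[B]$. Now take $\varphi(\bar x)\in\Sigma$ and $\bar b\in B^{|\bar x|}$. The direction $f[B]\models\varphi(f(\bar b))\Rightarrow B\oplus_A C\models\varphi(f(\bar b))$ follows from Proposition~\ref{preserve}(1) applied to the inclusion. For the other direction, suppose $B\oplus_A C\models\varphi(f(\bar b))$. Applying Proposition~\ref{preserve}(1) to $t$ gives $D\models\varphi(t(f(\bar b)))$, that is, $D\models\varphi(f'(\bar b))$. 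Since $f'[B]\leq_\Sigma D$, we get $f'[B]\models\varphi(f'(\bar b))$. Transporting along the isomorphism $f'\colon B\to f'[B]$ gives $B\models\varphi(\bar b)$, and hence $f[B]\models\varphi(f(\bar b))$. Thus $f[B]\leq_\Sigma B\oplus_A C$, i.e.\ $f$ is a $\preccurlyeq$-embedding.

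The main point to check is that positive syntacticity is exactly what is needed. Formulas in $\Lambda^{\infty,\infty}_\infty$ are preserved forward by arbitrary homomorphisms (Proposition~\ref{preserve}), and $t$ need not be injective, so the argument genuinely relies on this preservation. Beyond that, one only has to note that Proposition~\ref{preserve} applies to formulas with infinitely many free variables as well, which it does as stated. Everything else is routine.
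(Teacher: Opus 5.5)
Your proof is correct and takes the same approach as the paper. The easy direction uses the pushout square itself as the amalgam. The other direction uses the universal map $t\colon B\oplus_A C\to D$ into an amalgam, forward preservation of $\Sigma$-formulas along $t$ (Proposition~\ref{preserve}), and reflection through the strong embedding $f'\colon B\to D$.
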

\begin{proof}
If $(\rmod,\preccurlyeq)$ is closed under pushouts, then it clearly has amalgamation. Conversely, assume amalgamation, and let $A\preccurlyeq B,C$. Choose an amalgam $D$ and strong embeddings $h_B:B\to D$ and $h_C:C\to D$ satisfying
$h_B\restriction A=h_C\restriction A.$
We show that the canonical map $f:B\to B\oplus_A C$ is a strong embedding; the argument for $g:C\to B\oplus_A C$ is analogous.

\smallskip \noindent Choose $\Sigma\subseteq\Lambda^{\infty,\infty}_\infty$ such that $\preccurlyeq=\leq_\Sigma$. By Remark~\ref{pushout}, $f$ is a monomorphism. It remains to show that, for every $\varphi(\bar x)\in\Sigma$ and every $\bar b\in B^{|\bar x|}$, $B\oplus_A C\models\varphi(f(\bar b))$ if and only if 
$B\models\varphi(\bar b).$

\smallskip \noindent The reverse implication follows from Proposition~\ref{preserve}. For the forward implication, suppose that $B\oplus_A C\models\varphi(f(\bar b))$. By the universal mapping property of the pushout, there is a homomorphism $k:B\oplus_A C\to D$ such that $k\circ f=h_B$ and $k\circ g=h_C$. Proposition~\ref{preserve} gives
$D\models\varphi(h_B(\bar b)).$
Since $h_B$ is a strong embedding, $B\models\varphi(\bar b)$. Thus $f$ is a strong embedding.
\end{proof}

\begin{theorem}\label{no_AP_th}
If $\preccurlyeq\in\mathscr{L}_{\mathbb Z}$ is positive syntactic and lies above $\pleqq$ in $\mathscr{L}_{\mathbb Z}$, then $(\zmod,\preccurlyeq)$ does not have the amalgamation property.
\end{theorem}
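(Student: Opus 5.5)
By Proposition~\ref{push-AP}, since $\preccurlyeq$ is positive syntactic, it suffices to find a span $A\preccurlyeq B$, $A\preccurlyeq C$ whose pushout $B\oplus_A C$ is not a $\preccurlyeq$-amalgam, i.e.\ such that $f\colon B\to B\oplus_A C$ is not a strong embedding. Since $\preccurlyeq$ lies above $\pleqq$, we have $\preccurlyeq\,\subseteq\,\pleqq$. So it is enough to show that $f[B]\not\pleqq B\oplus_A C$. That is, we need a weak infinitary $\mrm{pp}$-formula $\varphi(\bar x)$ and a finite tuple $\bar b$ from $B$ with $B\oplus_A C\models\varphi(f(\bar b))$ but $B\not\models\varphi(\bar b)$.

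The difficulty is producing strong embeddings for an \emph{arbitrary} such $\preccurlyeq$. The only ones we get for free are direct-summand embeddings, and a pushout of split embeddings is again split, so it is useless. I will instead use the L\"owenheim--Skolem--Tarski axiom. Fix a prime $p$, let $\lambda=\mathrm{LS}(\zmod,\preccurlyeq)$, and take $B=C=J_p^{\kappa}$, a large power of the $p$-adic integers with $\kappa>\lambda$. This $B$ is pure-injective and torsion-free, and every element has a rich pp-type: for each $b\in B$ there are finite parameters approximating $b$ modulo $p^nB$ for every $n$. By LST there is $A\preccurlyeq B$ with $|A|\leq\lambda<|B|$, and we may first place into $A$ a chosen element $e=(1,1,\dots)$ or similar. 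In particular $A\pleqq B$, so $A$ is pure in $B$ and $A\neq B$. Then take the span $A\preccurlyeq B$, $A\preccurlyeq C=B$.

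In $P=B\oplus_A B$ fix $b\in B\setminus A$. The elements $f(b)$ and $g(b)$ differ, while for each $n$ purity of $A$ in $B$ together with the $p$-adic structure should give $a_n\in A$ with $b-a_n\in p^nB$. Hence $f(b)-g(b)\in p^nP$ for all $n$; that is, $f(b)-g(b)$ lies in the first Ulm subgroup $p^\omega P$. Concretely, I aim for a formula of the shape
\[
\varphi(x)=\exists y\bigwedge_{n<\omega}\exists z_n\,(y-x=p^nz_n)\ \wedge\ (\text{extra pp-conditions on }y),
\]
with $x$ evaluated at a suitable element of $f[B]$ (for instance $f(b)$ or $f$ of a combination with $e$). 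This formula should be witnessed in $P$ by $y=g(b)$, or a translate of it, but fail in $B$. It fails in $B$ because $B$ is reduced ($p^\omega J_p^\kappa=0$): any witness $y\in B$ would be forced to equal $x$, and the extra conditions are chosen to rule that out.

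The main obstacle is choosing the extra conditions so that $\varphi(f(b))$ genuinely fails in $B$ while still holding in $P$. The formula may only use finitely many parameters from $f[B]$, yet it must distinguish $g(b)$ from every element of $f[B]$. My first attempt will be to use the torsion-free rank: require $y$ to satisfy the same divisibility pattern as $b$ while being forced, through a relation with a parameter from $A$ and the disjointness $f[B]\cap g[B]=f[A]$, to lie outside $f[B]+p^\omega P$. If that fails, I will replace $C=B$ by a different pure-injective $C$ of larger $p$-length, which makes $p^\omega P$ nontrivial in a controlled way, as in the isotype computations of Proposition~\ref{crucial_prop}.
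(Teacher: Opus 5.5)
\textbf{There is a genuine gap: your main construction cannot produce a counterexample.} You take $C=B$ with the same inclusion $A\leq B$ on both sides. Then the codiagonal
\[
\nabla\colon B\oplus_A B\to B,\qquad [(b,b')]\mapsto b+b'
\]
is a well-defined homomorphism, since it kills $\widehat A$, and it satisfies $\nabla\circ f=\mathrm{id}_B$. So $f$ is split, $f[B]\leq_\oplus B\oplus_A B$, and therefore $f[B]\preccurlyeq B\oplus_A B$ for every $\preccurlyeq\in\mathscr{L}_{\mathbb Z}$.

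Directly: if $B\oplus_A B\models\varphi(f(\bar b))$ for some $\varphi\in\Lambda^{\infty,\infty}_\infty$, then Proposition~\ref{preserve} applied to $\nabla$ gives $B\models\varphi(\bar b)$. Your intended witness $g(b)$ is sent to $b$, and $y-x$ is sent to $0$, so no choice of ``extra pp-conditions'' can work. Equivalently, $B$ with identity maps trivially amalgamates the span, and the proof of Proposition~\ref{push-AP} then forces the pushout maps to be strong.

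The same obstruction defeats every variant of your strategy. If $B$ and $C$ arise by the L\"owenheim--Skolem--Tarski axiom and coherence inside one ambient module $M$, then $M$ amalgamates. If $B\preccurlyeq C$, then $C$ amalgamates. Your fallback does not escape this. Pure-injective abelian groups have $p$-length at most $\omega$, so no pure-injective $C$ has larger $p$-length than $J_p^\kappa$. More importantly, you give no way to certify $A\preccurlyeq C$ for a $C$ unrelated to $B$.

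There is also a smaller error. You claim that $b-a_n\in p^nB$ for some $a_n\in A$, for an arbitrary $b\in B\setminus A$. This fails in general, because $|(A+pB)/pB|=|A/pA|\leq\lambda<2^\kappa=|B/pB|$.

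\textbf{The missing idea} is how to obtain strong, non-split embeddings of one $A$ into two independent modules, for an arbitrary $\preccurlyeq$. You correctly note that pushouts of split embeddings are split. The paper gets around this with the smoothness axiom.

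It writes $A=\bigcup_{n<\omega}A(n)$, where each $A(n)\cong\bigoplus_{i\leq n}\mathbb{Z}/p_i\mathbb{Z}$ is a direct summand of $B$, of $C$, and of $A(m)$ for every $m\geq n$. So $(A(n))_n$ is a $\preccurlyeq$-chain with $A(n)\preccurlyeq B,C$. Smoothness then yields $A\preccurlyeq B$ and $A\preccurlyeq C$, even though $A$ is a summand of neither.

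The modules $B,C\leq\prod_{i<\omega}\mathbb{Z}/p_i\mathbb{Z}$ encode two incompatible tails:
\begin{itemize}
\item $B$: eventually $f(i+1)=p_if(i)$;
\item $C$: eventually $f(i)=0$ at odd $i$ and $f(i+2)=p_ip_{i+1}f(i)$ at even $i$.
\end{itemize}
In the pushout, $g(\hat c)$ serves as the witness $z$ for a weak infinitary pp-formula $\varphi(f(\hat b))$. The growth condition $p_{i+1}>p_i^2\prod_{j<i}p_j$ then shows that no element of $B$ can be such a witness.
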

\begin{proof}
Let $\preccurlyeq\in\mathscr{L}_{\mathbb Z}$ be positive syntactic and lie above $\pleqq$. We show that $(\zmod,\preccurlyeq)$ is not closed under pushouts; Proposition~\ref{push-AP} then gives the result.

\smallskip \noindent Choose prime numbers $(p_i)_{i<\omega}$ such that
\[
p_{i+1}>p_i^2(\prod_{j=0}^{i-1} p_j)
\]
for every $i<\omega$.  For each $f: \omega \to \mathbb{Z}$, let $\hat{f}: \omega \to \mathbb{Z}/ p_i \mathbb{Z}$  be given by $\hat{f}(i)= f(i)+ p_i \mathbb{Z}$.

Define
\begin{enumerate}[(i)]
\item $M=\prod_{i<\omega} \mathbb Z/p_i\mathbb Z $;
\item
\[
A=\{\hat{f} \mid f: \omega \to \mathbb{Z}\text{ and } (\exists k<\omega)(\forall i>k)\ f(i)=0\};
\]
\item
\[
B=\{\hat{f} \mid f: \omega \to \mathbb{Z}\text{ and } (\exists k<\omega)(\forall i>k)\ f(i+1)=p_i f(i)\};
\]
\item
\[
\begin{split}
C=\{\hat{f} \mid f: \omega \to \mathbb{Z}\text{ and } {}&(\exists k<\omega)(\forall i>k)\\
&\begin{cases}
 f(i)=0 & \text{if $i$ is odd},\\
 f(i+2)=p_ip_{i+1}f(i)\text{ in }\mathbb Z & \text{if $i$ is even}
 \end{cases}\}.
\end{split}
\]
\end{enumerate}

\smallskip \noindent Notice that $A \leq B, C \leq M$. For $n<\omega$, let
\[
A(n)=\{\hat{f} \mid f: \omega \to \mathbb{Z}\text{ and }  (\forall i>n)\ f(i)=0\}.
\]
Then:
\begin{enumerate}[(1)]
\item $A(n)\leq_\oplus B$ and $A(n)\leq_\oplus C$ for every $n<\omega$. Indeed, the intended complements are
\[
D(n)=\{\hat{f} \mid f: \omega \to \mathbb{Z}\text{ , } \hat{f}\in B \text{ and }(\forall i\leq n)\ f(i)=0\}
\]
and
\[
E(n)=\{\hat{f} \mid f: \omega \to \mathbb{Z}\text{ , } \hat{f}\in C \text{ and }(\forall i\leq n)\ f(i)=0\}.
\]
Thus $A(n)\preccurlyeq B,C$.
\item If $n\leq m<\omega$, then $A(n)\leq_\oplus A(m)$, with complement
\[
A'(n)=\{\hat{f} \mid f: \omega \to \mathbb{Z}\text{ , } \hat{f}\in A(m) \text{ and }(\forall i\leq n)\ f(i)=0 \}.
\]
Thus $A(n)\preccurlyeq A(m)$.
\end{enumerate}

\smallskip \noindent Since $A=\bigcup_{n<\omega}A(n)$, smoothness gives $A\preccurlyeq B,C$.

\smallskip \noindent Let $f:B\to B\oplus_A C$ and $g:C\to B\oplus_A C$ be the canonical maps of the pushout  (see Remark \ref{pushout}). We show that $f$ is not a $\pleqq$-embedding and hence is not a $\preccurlyeq$-embedding.

\smallskip \noindent Let $\varphi(x):=\exists z\,\psi(x,z)$, where
\[
\psi(x,z):=
\bigwedge_{n<\omega}
\exists y_0\cdots\exists y_{2n-1}
\begin{pmatrix}
\displaystyle \bigwedge_{j<2n}p_jy_j=0,\\[2pt]
\displaystyle \prod_{j<2n}p_j\mid x-\sum_{j<2n}y_j,\\[2pt]
\displaystyle \prod_{j<2n}p_j\mid z-
\sum_{\substack{j<2n\\ j\text{ even}}}y_j
\end{pmatrix}.
\]
Observe that $\varphi(x)\in\Lambda^{\mathrm{wpp}}_\infty$.

\smallskip \noindent Let $b: \omega \to \mathbb{Z}$ be defined by $b(0)=1$ and
$b(i+1)=p_i b(i)$
for every $i<\omega$. Note that $\hat{b} \in B$ and  we claim that
\[
B\oplus_A C\models\varphi(f(\hat{b}))
\quad\text{but}\quad
B\not\models\varphi(\hat{b}).
\]

\smallskip \noindent Let $c: \omega \to \mathbb{Z}$ be defined by
\[
c(i)=
\begin{cases}
0 & \text{if $i$ is odd},\\
1 & \text{if $i=0$},\\
p_{i-2}p_{i-1}c(i-2) & \text{if $i\geq2$ is even}.
\end{cases}
\]

\smallskip \noindent Note that $\hat{c} \in C$.

\smallskip \noindent Fix $n<\omega$. For every $j<2n$, let $a_j:\omega\to\mathbb{Z}$ be given by $a_j(j)=b(j)$ and $a_j(i)=0$ if $i \neq j$. Note that $\widehat{a_j} \in A$ for every $j < 2n$. 

\smallskip \noindent We have that  $B\models\bigwedge_{j<2n}p_j\widehat{a_j}=0$. Let $b': \omega \to \mathbb{Z}$ be given by
\[
b'(i)=
\begin{cases}
0 & \text{if $i<2n$},\\[2pt]
\displaystyle\frac{b(i)}{p_0\cdots p_{2n-1}} & \text{if $i\geq2n$}.
\end{cases}
\]
Note that $\hat{b'} \in B$ and 
\[
B\models
\prod_{j<2n}p_j \hat{b'}=\hat{b}-\sum_{j<2n}\widehat{a_j}.
\]
Similarly,
\[
C\models
\prod_{j<2n}p_j\mid
\hat{c}-\sum_{\substack{j<2n\\j\text{ even}}}\widehat{a_j}.
\]
Using Proposition~\ref{preserve} and the identity $f\restriction A=g\restriction A$, we conclude that
$
B\oplus_A C\models\psi(f(\hat{b}),g(\hat{c})),
$
and hence $B\oplus_A C\models\varphi(f(\hat{b}))$.

\smallskip \noindent It remains to show that $B\not\models\varphi(\hat{b})$. Suppose otherwise, let $e:\omega\to\mathbb{Z}$ such that $\hat{e}\in B$ witnesses the formula, i.e.,  $B\models\psi(\hat{b},\hat{e})$. Then there is $k<\omega$ such that
$e(i+1)=p_i e(i)$
for every $i>k$. Let $k' <\omega$ with $k' > k$ such that $|e(k)| < p_{k'}$. Then using that $p_{i+1}>p_i^2(\prod_{j=0}^{i-1} p_j)$
for every $i<\omega$, it follows that for every $i> k'$ we have both:
\begin{equation}
e(i+1)=p_i e(i) \text{ and } |e(i)| < p_i
\end{equation}

\smallskip \noindent Choose $n$ large enough such that there is an odd integer $m$ with
\[
k' <m
\quad\text{and}\quad
m+1<2n.
\]  

\smallskip \noindent Let $(\widehat{a_j})_{j<2n}$ be witnesses to the $n$th conjunct of $\psi(\hat{b},\hat{e})$.

\smallskip \noindent For every $j < 2n$, it follows from $B \models p_j \widehat{a_j}=0$ and the fact that the primes $p_i$'s are distinct,  that $p_i \mid a_j(i)$ for every $i \neq j$. Hence $\widehat{a_j}(i) = 0_{\mathbb Z/p_i\mathbb Z}$ for every $i \neq j$. 

\smallskip \noindent For $j < 2n$, the first divisibility condition then yields \begin{equation}
p_j \mid b(j) - a_j(j).
\end{equation}
The second divisibility condition then yields
\begin{equation}
\begin{cases}
p_j \mid e(j) & \text{if $j<2n$ is odd},\\
p_j \mid e(j) - a_j(j) & \text{if $j<2n$ is even}.
\end{cases}
\end{equation}

\smallskip \noindent In particular, for $m < 2n$, which is odd, we have that $p_m \mid e(m)$ by Equation (3). Since $m > k'$ we also have that $|e(m)| < p_m$ by Equation (1). Hence $e(m)=0$. As $e(m+1)=p_me(m)$ by Equation (1), we conclude that $e(m+1)=0$.

\smallskip \noindent Since $m+1 < 2n$ is even, it follows that \[ p_{m+1} \mid e(m+1) - a_{m+1}(m+1)= -a_{m+1}(m+1)\] from Equation (3) and the last equation of the previous paragraph. Then it follows that $p_{m+1} \mid b(m+1)$ by Equation (2). This is a contradiction as an easy induction yields that $0 < b(m+1) < p_{m+1}$. \end{proof}

\begin{remark} 
Observe that $f: B \to B \oplus_A C$ of the proof of Theorem \ref{no_AP_th} is an explicit example of a pure embedding which is not a $\pleqq$-embedding. See Remark \ref{rmk-compare} for another example.
\end{remark}

\newpage 
\appendix
\section{Depiction of  $\mathscr{L}_{\mathbb Z}$}\label{app-A}
The following figure depicts our current understanding of $\mathscr{L}_{\mathbb Z}$: 

\begin{figure}[H]
\centering
\begin{tikzpicture}[scale=1.15,every node/.style={inner sep=2pt, font=\normalsize},]
\node (leq) at (0,0) {$\leq$};

\node (leq0)   at (-4.2,1.4) {$\leq_{\mrm{iso}}^p$};
\node (leqn)   at (-2.1,1.4) {$\leq_{U_0}$};
\node (cdots2) at (-1.0,1.4) {$\cdots$};
\node (leqw)   at ( 0,  1.4) {$\leq_{U_\omega}$};
\node (cdots3) at ( 1.0,1.4) {$\cdots$};
\node (leqa)   at ( 2.1,1.4) {$\leq_{U_\alpha}$};
\node (cdots4) at ( 2.9,1.4) {$\cdots$};
\node (leqb)   at ( 4.2,1.4) {$\leq_{U_\beta}$};

\draw (leq) -- (leq0);
\draw (leq) -- (leqn);
\draw (leq) -- (leqw);
\draw (leq) -- (leqa);
\draw (leq) -- (leqb);

\node (leqpp) at (0,2.9) {$\leq_{\mathrm{pp}}$};
\draw (leqn) -- (leqpp);
\draw (leqw) -- (leqpp);
\draw (leqa) -- (leqpp);
\draw (leqb) -- (leqpp);

\node (leqisoalpha) at (-4.2,4.7) {$\leq^{p^{\aleph_0}}_{\mathrm{iso}}$};
\node (leqisobeta)  at (-4.2,7.5) {$\leq^{p^\mu}_{\mathrm{iso}}$};
\node (cdotsL)      at (-4.2,9.6) {$\vdots$};

\draw (leq0)        -- (leqisoalpha);
\draw (leqisoalpha) -- (leqisobeta);
\draw (leqisobeta)  -- (cdotsL);

\node (leqww1) at (4.2,4.7) {$\pleqq$};
\draw (leqpp) -- (leqww1);

\node (cdotsC0) at (0,4.7) {$\vdots$};
\node (leqpptheta) at (0,6.1) {$\leq^{(\aleph_1,\infty)}_{\mathrm{pp}}$};
\node (cdotsC1) at (0,7.5) {$\vdots$};
\node (leqppcard) at (0,8.9){$\leq^{(\mu^+,\infty)}_{\mathrm{pp}}$};
\node (cdotsC2) at (0,10.3) {$\vdots$};
\node (leqppmu) at (0,11.7) {$\leq^{(\theta,\infty)}_{\mathrm{pp}}$};
\node (cdotsC3) at (0,13.1) {$\vdots$};

\draw (leqpp)     -- (cdotsC0);
\draw (cdotsC0)   -- (leqpptheta);
\draw (leqpptheta) -- (cdotsC1);
\draw (cdotsC1)   -- (leqppcard);
\draw (leqppcard) -- (cdotsC2);
\draw (cdotsC2)   -- (leqppmu);
\draw (leqppmu)   -- (cdotsC3);
\draw (leqww1) -- (leqpptheta);
\draw (leqisoalpha) -- (leqpptheta);
\draw (leqisobeta) -- (leqppcard);

\node (leqoplus) at (0,15.2) {$\leq_\oplus$};
\draw (leqoplus) -- (-1.7,14.3);
\node[rotate=40] at (-2.0,14.1) {$\cdots$};
\draw (leqoplus) -- (0,14.0);
\node[rotate=90] at (0,13.7) {$\cdots$};
\draw (leqoplus) -- (1.7,14.3);
\node[rotate=-40] at (2.0,14.1) {$\cdots$};
\end{tikzpicture}

\caption{Strong submodel relations on $\mathbb{Z}\text{-}\mathrm{Mod}$.}
\label{fig:lattice-zmod}
\end{figure}

\end{document}